\documentclass{article}
\usepackage{graphicx} 
\usepackage{amsmath,mathtools}
\usepackage{amsthm}
\usepackage{amssymb}
\usepackage{geometry}
\usepackage{hyperref}
\usepackage{subcaption}
\usepackage{booktabs} 
\usepackage{hyperref}
\usepackage{amsfonts}
\usepackage{enumitem}
\usepackage{tikz}
\usetikzlibrary{shapes}
\usetikzlibrary{fit}
\usepackage{thmtools}
\usepackage{thm-restate}
\usepackage{multirow}
\usepackage{authblk}

\newtheorem{theorem}{Theorem}[section]
\newtheorem{corollary}[theorem]{Corollary}

\newtheorem{lemma}[theorem]{Lemma}
\newtheorem{proposition}[theorem]{Proposition}
\theoremstyle{remark}
\newtheorem*{note}{Note}
\newtheorem{remark}[theorem]{Remark}
\newtheorem{claim}[theorem]{Claim}
\newenvironment{claimproof}[1][Proof of Claim]{\begin{proof}[#1]}{\end{proof}}

\title{A New Lower Bound on the Spectral Radius of Graphs with Prescribed Average Degree}
\author[1]{Sonny Ben-Shimon}
\author[2]{Idan Eisner}
\author[3]{Shlomo Hoory}
\date{\today}
\affil[1]{Google Inc.\footnote{sonnyb@google.com}}
\affil[2]{Department of Computer Science, Tel-Hai University of the Galilee, Kiryat Shmona, Israel\footnote{eisnerida@telhai.ac.il}}
\affil[2]{Department of Computer Science, Tel-Hai University of the Galilee, Kiryat Shmona, Israel\footnote{hooryshl@telhai.ac.il}}

\newcommand{\R}{\mathbb{R}}  
\newcommand{\C}{\mathbb{C}}  
\newcommand{\N}{\mathbb{N}} 
\newcommand{\one}{{\bf 1}} 
\newcommand{\dg}{{\bf d}} 
\newcommand{\M}{\mathcal{M}}  
\newcommand{\G}{\mathcal{G}}  
\newcommand{\largeNu}{\text{\scalebox{1.75}{$\nu\!$}}}  
\newcommand{\rhomin}{\rho_{\textrm{min}}}
\newcommand{\rhomax}{\rho_{\textrm{max}}}
\newcommand{\rhorms}{\rho_{\textrm{rms}}}
\newcommand{\rhoent}{\rho_{\textrm{ent}}}
\newcommand{\dmin}{{\textrm{d}}_{\min}}
\newcommand{\dmax}{{\textrm{d}}_{\max}}
\newcommand{\MnR}{M_n(\R)} 
\newcommand{\MnC}{M_n(\C)} 
\DeclareMathOperator{\sgn}{sgn}

\begin{document}

\maketitle

\begin{abstract}
This work establishes an improved lower bound for the spectral radius of a graph given its average degree. 
The new bound follows from an exact solution of the fractional relaxation of the problem. 
Our findings lead to an affirmative answer to a conjecture by Hong (1993) for graphs with specific average degrees -- as the extremal graphs that meet our bound are proven to have a minimal and maximal degree that differ by at most one. Furthermore, we provide an exact characterization of the conditions that permit such discrete realizations. We prove that for a fixed number of vertices $n$, the number of valid edge configurations grows at least linearly with $n$, achieving an average asymptotic order of $\Theta(n\log n)$.
\end{abstract}

\section{Introduction}

Let $G = (V, E)$ be a simple graph with $n$ vertices and $e$ edges. Spectral graph theory studies the properties of a graph by analyzing the eigenvalues of its associated matrices. The primary matrix of interest is the \textbf{adjacency matrix} $A_G$, an $n \times n$ matrix where $A_{ij}=1$ if vertices $v_i$ and $v_j$ are adjacent, and 0 otherwise. Since $A_G$ is real and symmetric, it has $n$ real eigenvalues, $\lambda_1(G) \ge \lambda_2(G) \ge \dots \ge \lambda_n(G)$, which form the \textbf{spectrum} of the graph. The largest of these eigenvalues, $\rho(G) = \lambda_1(G)$, is known as the \textbf{spectral radius} of $G$.  

The spectral radius is not merely an abstract scalar; it is a governing parameter for a multitude of graph-theoretic invariants and processes. The ``Brualdi-Solheid problem''~\cite{brualdi1985spectral},~\cite{brualdi1986spectral} -- the task of determining the graphs that maximize or minimize the spectral radius within a given class of graphs $\mathcal{G}$ -- has become a cornerstone of spectral graph theory. 
We refer the reader to the monograph~\cite{cvetkovic2009introduction} for a multitude of references and applications.

In this work, we are interested in the problem of minimizing the spectral radius $\rho$ of an undirected simple graph $G$, and identifying the extremal graphs which attain these bounds for $n$-vertex graphs with a fixed number of edges $e$. In the process we analyze the family of graphs when the graph must realize a specific, prescribed degree sequence $\dg=(d_1,\dots,d_n)$ where $\sum_i d_i = 2e$. Bi-regular graphs are of special interest. Namely the first $n_1$ vertices are of degree $d_1$ and the other $n_2=n-n_1$ vertices are of degree $d_2$.

We introduce the following notation for the relevant undirected simple graph families. Note that graphs in these families are not necessarily connected.  
\begin{enumerate}
    \item $\G_{n,e}$ are $n$-vertex graphs with $e$ edges.
    \item $\G_\dg$ are graphs with a given degree sequence $\dg=(d_1,\dots,d_n)$. 
    \item $\G_{d_1,n_1,d_2,n_2}$ are bi-regular graphs with $n_1$ degree $d_1$ vertices and $n_2$ degree $d_2$ vertices, where $n=n_1+n_2$.
\end{enumerate}

For a graph family $\G$, We let $\rhomin(\G)$ and $\rhomax(\G)$ denote the minimal and maximal spectral radius $\rho(G)$ for $G \in \G$, respectively. As this work is primarily interested in the minimal spectral radius, we denote $\rho(\G) = \rhomin(\G)$.

\subsection{Upper bounds}
The maximization problem has been extensively studied, often yielding results that point toward nested split graphs or graphs with dominant high-degree vertices (such as stars or cliques). In~\cite{stanley1987bound} Stanley provided a sharp upper bound depending solely on the number of edges $e$.
$$\rhomax(\G_{n,e}) \le \frac{-1 + \sqrt{1 + 8e}}{2}$$
Equality is attained if and only if $e = \binom{k}{2}$ and $G$ is the disjoint union of a complete graph $K_k$ and isolated vertices.

Hong~\cite{Hong1988} improved this by establishing the sharp upper bound $$\rhomax(\G_{n,e}) \le \frac{-1 + \sqrt{1 + 8e}}{2}.$$
The maximization problem has been extensively studied, often yielding results that point toward nested split graphs or graphs with dominant high-degree vertices (such as stars or cliques). In~\cite{stanley1987bound} Stanley provided a sharp upper bound depending solely on the number of edges $e$, improving upon an earlier result~\cite{brualdi1985spectral}.
Equality is attained if and only if $e = \binom{k}{2}$ and $G$ is the disjoint union of a complete graph $K_k$ and isolated vertices.

This was improved by Hong~\cite{Hong1988}, who established the sharp upper bound 
$$\rhomax(\G_{n,e}) \le \sqrt{2e - n + 1}$$
for any connected graph. 
Subsequently, increasingly precise bounds incorporating the degree sequence have been developed by Hong~\cite{hong2001sharp}, Shu and Wu~\cite{shu2004sharp}, and Liu and Weng~\cite{liu2013spectral}. Furthermore, Nikiforov~\cite{Nikiforov2002, Nikiforov2011} significantly advanced this field by systematically bridging spectral extremal theory with classical graph-theoretic problems.

\subsection{Lower bounds}\label{sect:lower_bounds}
For the lower bound there has been less work as the minimization problem presents a significantly more subtle and intricate challenge. Before delving into the main results of this work, we summarize the known lower bounds on the spectral radius for the aforementioned graph families.

\begin{proposition}[Theorem 3.2.1 in~\cite{cvetkovic2009introduction}]\label{prop:average_deg_lb}
$$\rhomin(\G_{n,e}) \ge \overline{d}= 2e/n.$$
\end{proposition}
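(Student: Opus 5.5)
We use the Rayleigh quotient characterization of the largest eigenvalue. Since the adjacency matrix $A_G$ is real and symmetric, its spectral radius $\rho(G)=\lambda_1(G)$ satisfies
$$\rho(G)=\max_{x\neq 0}\frac{x^{T}A_G x}{x^{T}x}.$$
Any particular nonzero test vector therefore gives a lower bound. We need no Perron--Frobenius or connectivity assumptions; only this variational principle for symmetric matrices.

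Take $x=\one$, the all-ones vector in $\R^n$. Then $\one^{T}\one=n$. Also
$$\one^{T}A_G\one=\sum_{i,j}A_{ij}=\sum_i d_i=2e,$$
because each edge contributes exactly two ones to the symmetric adjacency matrix. Hence
$$\rho(G)\ge \frac{2e}{n}=\overline{d}$$
for every $G\in\G_{n,e}$. Taking the minimum over the finite family $\G_{n,e}$ gives $\rhomin(\G_{n,e})\ge \overline{d}$.

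The only step requiring care is the claim that the Rayleigh quotient is bounded above by $\lambda_1$. This follows by expanding $x$ in an orthonormal eigenbasis of $A_G$, which the spectral theorem provides: if $x=\sum_k c_k u_k$, then $x^{T}A_Gx=\sum_k \lambda_k c_k^2\le \lambda_1\sum_k c_k^2=\lambda_1\, x^{T}x$.

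Equality holds exactly when $\one$ is an eigenvector for $\lambda_1$, that is, when $G$ is regular. This is not needed for the inequality, but it motivates the later refinements in the paper when $\overline{d}$ is not an integer.
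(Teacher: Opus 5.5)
Your argument is correct and is essentially the standard proof of the cited result. The paper gives no proof of its own beyond citing Theorem 3.2.1 of Cvetkovi\'c et al., where the bound is obtained exactly as you do, by evaluating the Rayleigh quotient at the all-ones vector $\one$; as the paper remarks, the same computation applies to any symmetric non-negative matrix with total entry sum $2e$, which is why $\overline{d}$ is also the trivial lower bound on $\rho(\M_{n,e})$.
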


\begin{proposition}[Hoffman, Theorem 8.1.25 in~\cite{cvetkovic2009introduction}]\label{prop:deg_rms_lb}
    $$\rhomin(\G_\dg) \ge \sqrt{\sum_{i=1}^n d_i^2/n}.$$
\end{proposition}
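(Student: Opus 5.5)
The plan is to apply the Rayleigh quotient characterization of the largest eigenvalue to the matrix $A^2$ rather than to $A$. Since $A=A_G$ is real symmetric, its eigenvalues $\lambda_1\ge\dots\ge\lambda_n$ are real. The eigenvalues of $A^2$ are then $\lambda_i^2\ge 0$, and the largest of them is $\max_i \lambda_i^2$.

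The first step is to show that this maximum equals $\rho(G)^2$. Because $A$ is entrywise nonnegative, the Perron--Frobenius theorem gives $\lambda_1 \ge |\lambda_i|$ for all $i$. Without invoking Perron--Frobenius, one can argue directly: take a unit eigenvector $x$ for $\lambda_i$ and let $|x|$ be its entrywise absolute value. Then
\[
|\lambda_i| = |x^T A x| \le |x|^T A |x| \le \lambda_1 ,
\]
where the last inequality is the Rayleigh bound for $A$. Consequently $\lambda_{\max}(A^2) = \rho(G)^2$, which is the same as $\rho(G)^2=\|A\|_2^2$.

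The second step is to test the Rayleigh quotient of $A^2$ on the normalized all-ones vector $\one/\sqrt n$. This gives
\[
\rho(G)^2 \;\ge\; \frac{\one^T A^2 \one}{n} \;=\; \frac{\|A\one\|^2}{n}.
\]
Since the $i$-th entry of $A\one$ is $d_i$, we have $\|A\one\|^2=\sum_i d_i^2$. Therefore $\rho(G)^2 \ge \sum_i d_i^2/n$, and taking square roots yields $\rho(G)\ge\sqrt{\sum_i d_i^2/n}$.

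This inequality holds for every $G\in\G_\dg$, so it also holds for the minimum over the family, $\rhomin(\G_\dg)$. As a consistency check, the Cauchy--Schwarz (or QM--AM) inequality shows that this bound implies Proposition~\ref{prop:average_deg_lb}.

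The only step that needs any care is the first one, namely that the top eigenvalue of $A^2$ is $\lambda_1^2$ rather than $\lambda_n^2$ for a possibly negative $\lambda_n$. That step is handled by the absolute-value argument above, and the rest of the proof is a one-line computation.
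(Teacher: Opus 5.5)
Your proof is correct. The paper gives no proof of its own here, only a citation, and your argument is the standard one for this bound: $\rho(G)^2=\lambda_{\max}(A^2)\ge \one^T A^2\one/n=\|A\one\|^2/n=\sum_i d_i^2/n$, with the absolute-value step correctly showing $\lambda_1\ge|\lambda_i|$ for nonnegative $A$. Because the argument uses only symmetry and nonnegativity, it also bears out the paper's remark that the bound extends to general symmetric nonnegative matrices.
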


The following is an entropy-based bound which follows from the fact that the Maximal Entropy Random Walk achieves at least the entropy of any (memoryless) random walk on the graph, in particular, that of the Simple Random Walk. It can also be derived directly using techniques similar to those of~\cite{alon2002moore,eisner2024entropy}.
\begin{proposition}[Burda et. al, \cite{Burda2009} eq. (14)]\label{proposition:entropy_lb}   
    $$\rhomin(\G_\dg) \ge \prod_{i=1}^n d_i^{\frac{d_i}{2e}}.$$
\end{proposition}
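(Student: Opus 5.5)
We must prove $\rho(G)\ge \prod_i d_i^{d_i/2e}$ for every $G\in\G_\dg$. The plan is to compare the walk count $\one^T A^k \one$, which grows like $\rho^k$, with the number of non-backtracking-free walks as weighted by the simple random walk, and to extract the product via an entropy (AM--GM / Jensen) argument. We may assume there are no isolated vertices: a vertex with $d_i=0$ contributes the factor $0^0=1$ and does not affect $\rho$.

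\textbf{Step 1: count walks by probability.} Let $W_k$ be the set of walks $(v_0,\dots,v_k)$ of length $k$ in $G$, so $|W_k|=\one^T A^k\one$. Start the simple random walk from the stationary distribution $\pi_v=d_v/2e$. A walk $w\in W_k$ then has probability
\[
p(w)=\frac{d_{v_0}}{2e}\prod_{j=0}^{k-1}\frac{1}{d_{v_j}}.
\]
Since $p$ is a probability distribution supported on $W_k$, concavity of $\log$ (Jensen) gives
\[
\log|W_k|\ \ge\ \sum_w p(w)\log\frac{1}{p(w)}.
\]
The right-hand side is the entropy $H(p)$, because $\sum_w p(w)\log(1/p(w))\le \log|\mathrm{supp}\,p|$.

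\textbf{Step 2: compute the entropy.} By stationarity each $v_j$ is distributed according to $\pi$. Therefore
\[
\mathbb{E}\Bigl[\sum_{j=0}^{k-1}\log d_{v_j}\Bigr]=k\sum_v \frac{d_v}{2e}\log d_v,
\]
and the remaining term $\mathbb{E}\log(2e/d_{v_0})$ is a constant $c$ independent of $k$. Combining with Step 1,
\[
\log \one^T A^k\one\ \ge\ k\sum_v\frac{d_v}{2e}\log d_v + c.
\]

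\textbf{Step 3: pass to the spectral radius.} Write the spectral decomposition $A=\sum_i\lambda_i u_iu_i^T$. Then
\[
\one^T A^k\one=\sum_i\lambda_i^k(u_i^T\one)^2\le n\rho^k,
\]
using $|\lambda_i|\le\rho$. Taking logarithms, dividing by $k$, and letting $k\to\infty$ yields
\[
\log\rho\ \ge\ \sum_v\frac{d_v}{2e}\log d_v,
\]
which is the claimed bound after exponentiating.

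The only mildly delicate point is Step 1, namely the inequality $|\mathrm{supp}\,p|\ge e^{H(p)}$ applied to walk probabilities. It holds because $p$ is supported exactly on $W_k$. Everything else is routine once stationarity of $\pi$ under the simple random walk is noted: $\sum_u \pi_u A_{uv}/d_u=\pi_v$.
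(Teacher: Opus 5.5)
Your proof is correct, and it is essentially the argument the paper has in mind. The paper only cites Burda et al.\ and remarks that the bound follows either from the maximal-entropy random walk (entropy rate $\log\rho$) beating the simple random walk, or directly by walk-counting techniques as in Alon--Hoory--Linial; your bound $H(p)\le\log\one^T A^k\one\le\log n+k\log\rho$ for the stationary simple random walk is exactly that direct derivation, and it also supplies the half of the maximal-entropy fact that the first route relies on.

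Two wording fixes are needed. The phrase ``non-backtracking-free walks'' should read simply ``walks'', since you count all walks. In Step~1, the inequality $H(p)\le\log|\mathrm{supp}\,p|\le\log|W_k|$ is what justifies the displayed bound; it is not the reason the right-hand side equals the entropy.
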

 
It should be noted that the root mean square bound \ref{prop:deg_rms_lb} as well as the entropy bound \ref{proposition:entropy_lb} are better than the simple average degree bound \ref{prop:average_deg_lb} with equality if and only if the graph is regular. Furthermore, all bounds apply to the spectral radius of general symmetric non-negative matrices, by virtually the same proofs.

\subsection{Our contribution}

The problem of minimizing the spectral radius of a graph in $\G_{n,e}$ is closely related to a conjecture by Hong (1993), problem 3 in ~\cite{hong1993bounds}, 
suggesting that if a simple connected graph with a prescribed number of edges and vertices has a minimal spectral radius, then the difference between its maximal and minimal vertex degrees is at most one
\footnote{The original conjecture, which requires the graph to be irregular even when the average degree is integral, was demonstrated to be false by Reti~\cite{reti2018graph}.}.
Recently, Cioaba et al.~\cite{cioaba2024minimum} proved that for some cases of dense graphs the conjecture is true,  where $(e, n)$ satisfies $e \geq (n-1)(n-2)/2 - 2$ or $e = n^2/4 - 1$ or $e = n^2/3 - 1$.

Our approach is to relax the problems to weighted graphs by dropping the integrality constraint on the adjacency matrix. 
Rather than directly investigating $\rho(\G) = \min\{\rho(A_G) \mid G\in\G\}$ 
when $\G$ is either $\G_{n,e}$ or $\G_{d_1,n_1,d_2,n_2}$, 
we consider $\rho(\M) = \min\{\rho(M) \mid M\in\M\}$, for two specific sets of non-negative symmetric $n \times n$ matrices $\M$:
\begin{enumerate}
    \item matrices with a total sum of $2e$ and integral row sums, denoted by $\M_{n,e}$,
    \item matrices with $n_1$ sum $d_1$ rows and $n_2$ sum $d_2$ rows, denoted by $\M_{d_1,n_1,d_2,n_2}$, where $d_1, d_2$ are any distinct non-negative real numbers.
\end{enumerate}

\smallskip\noindent
Note that the two sets $\M_{n,e}$ and $\M_{d_1,n_1,d_2,n_2}$ are compact, so the minimum of $\rho$ is attained.
Our first result is a formula for the minimal $\rho$ on each of the two sets, as well as an exact characterization of the weighted graphs achieving that minimum. 
The result for $\M_{n,e}$ can be regarded as an affirmative answer to the relaxation of Hong's question to weighted graphs.

\begin{restatable}{theorem}{BiregularMatrixTheorem}\label{theorem:Mn1d1n2d2}
    Given integers $n_1 \geq 0$, $n_2>0$ and non-negative distinct real numbers $d_1,d_2$ such that $n_1 d_1 \le n_2 d_2$, 
    \begin{equation}\label{eq:rhoopt_intro}
       \rho(\M_{d_1,n_1,d_2,n_2}) = 
            \frac{1}{2}\left[d_2 - d_1 \frac{n_1}{n_2} + \sqrt{4d_1^2 \frac{n_1}{n_2} + (d_2 - d_1 \frac{n_1}{n_2})^2}\,\right],
    \end{equation} 
    where a weighted graph in $\M_{d_1,n_1,d_2,n_2}$ is $\rho$-minimizing iff its induced graph on the degree $d_1$ vertices is empty and its induced graph on the degree $d_2$ vertices is regular.
\end{restatable}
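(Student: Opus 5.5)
Let $V_1$ be the $n_1$ rows of sum $d_1$ and $V_2$ the $n_2$ rows of sum $d_2$. Write a matrix $M\in\M_{d_1,n_1,d_2,n_2}$ in block form with diagonal blocks $M_{11}$, $M_{22}$ and off-diagonal block $M_{12}=M_{21}^T$. The plan has two halves: an upper bound on $\rho$ via an explicit construction, and a matching lower bound via a two-dimensional quotient argument. Equality analysis in the lower bound will then give the characterization.

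For the construction, set $M_{11}=0$. Let $M_{12}$ have all entries equal to $d_1/n_2$, so every $V_1$ row sums to $d_1$ and every $V_2$ row receives $n_1d_1/n_2$ from $V_1$. Let $M_{22}$ be the constant matrix with entries $(d_2-d_1n_1/n_2)/n_2$. This is nonnegative precisely because $n_1d_1\le n_2d_2$. The partition $V_1,V_2$ is then equitable, with quotient matrix
\[
B=\begin{pmatrix}0 & d_1\\ d_1 n_1/n_2 & d_2-d_1n_1/n_2\end{pmatrix}.
\]
Hence $\rho(M)=\rho(B)$, which is the larger root of $x^2-(d_2-d_1\tfrac{n_1}{n_2})x-d_1^2\tfrac{n_1}{n_2}=0$. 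That root is exactly \eqref{eq:rhoopt_intro}. More generally, any $M$ with $M_{11}=0$ and $M_{22}$ having constant row sums $d_2-n_1d_1/n_2$ has the same equitable quotient, so it attains the same value. This gives the "if" direction, once we note that for such $M$ the row sums of $M_{21}$ are forced to equal $n_1d_1/n_2$ on average. I would need to check that regularity of the induced graph on $V_2$ is exactly what forces each such row sum to be constant.

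For the lower bound, I would use a Rayleigh quotient with a two-valued test vector $x=a\one_{V_1}+b\one_{V_2}$, where $a,b\ge 0$. Let $s=\one_{V_1}^TM_{11}\one_{V_1}$ and $t=\one_{V_1}^TM_{12}\one_{V_2}$. Then $n_1d_1=s+t$ and $n_2d_2=t+\one^TM_{22}\one$. This gives
\[
x^TMx=a^2 s+2ab\,t+b^2(n_2d_2-t),
\]
which is linear in the two free quantities $s$ and $t$. Substituting $s=n_1d_1-t$ yields $a^2n_1d_1+b^2n_2d_2-t(a-b)^2$, with $t\in[0,n_1d_1]$. The adversary minimizes this, which means taking $t=n_1d_1$, i.e.\ $s=0$.

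So for every $M$ we get $\rho(M)\ge \max_{a,b}\bigl[b^2(n_2d_2-n_1d_1)+2ab\,n_1d_1\bigr]/(n_1a^2+n_2b^2)$. This is the largest eigenvalue of the symmetrized $2\times2$ pencil, which equals $\rho(B)$; I would confirm this by a short computation. The key point is that the bound is uniform in $M$, because we maximize over $(a,b)$ after minimizing over $t$. Indeed, for the optimal $(a^*,b^*)$, the quadratic form is at least its value at $t=n_1d_1$.

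For the "only if" direction, suppose $\rho(M)$ equals the bound. Then the Rayleigh quotient is tight at $x^*=(a^*,b^*)$ with $a^*\neq b^*$, which holds since $d_1\ne d_2$. Tightness forces $t=n_1d_1$, i.e.\ $M_{11}=0$ (using nonnegativity). It also forces $x^*$ to be an eigenvector of $M$. The eigenvector equation on $V_2$ rows reads $(M_{21}\one)a^*+(M_{22}\one)b^*=\rho b^*$, and combining it with the row-sum constraint $(M_{21}\one)+(M_{22}\one)=d_2$ shows that $M_{22}\one$ is constant. That is, the induced graph on $V_2$ is regular.

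The main obstacle I expect is the degenerate cases. When $n_1=0$ or $d_1=0$, $V_1$ is effectively isolated. When $a^*=0$ or the optimal $t$ is not unique, the equality argument needs care, and I would treat these separately.
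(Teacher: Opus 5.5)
Your proof is correct, and for the lower bound and the characterization it takes a genuinely different and more elementary route than the paper.

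\textbf{The paper's route.} The paper works with a minimizer $M$, which exists by compactness. In the irreducible case it uses convexity of $\rho$ and symmetrization (Corollary~\ref{corollary:x_fixed_on_same_sum_rows}) to show the Perron vector is two-valued. A first-order perturbation argument (Lemma~\ref{lemma:generalized_perturbation}) then shows that one of the induced graphs on $V_1$, $V_2$ is empty. From this it deduces regularity on $V_2$ and passes to the $2\times 2$ quotient. The reducible case needs induction on $n$ together with the strict monotonicity of $\rho_0$ in $\nu$ (Lemma~\ref{lemma:rho0_monotone}).

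\textbf{Your route.} Your identity $x^TMx=a^2n_1d_1+b^2n_2d_2-t(a-b)^2$ shows that $M$ enters only through $t\in[0,n_1d_1]$. So fixing $(a^*,b^*)$ as the Perron vector of $B$ (the maximizer of your worst-case quotient) gives $\rho(M)\ge\rho(B)$ for \emph{every} $M$ in the family. This needs no minimizer, no irreducibility, no perturbation theory and no induction. The equality analysis also handles reducible minimizers directly: tightness of the Rayleigh quotient forces $x^*$ to be an eigenvector, and tightness of the $t$-step forces $M_{11}=0$.

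\textbf{Degenerate cases.} These cause no trouble.
\begin{itemize}
\item For $n_1>0$ the characteristic polynomial of $B$ has discriminant $(d_2-d_1n_1/n_2)^2+4d_1^2n_1/n_2>0$, so the top eigenvector is unique.
\item $a^*\neq b^*$ holds automatically, since $B\one=(d_1,d_2)^T$ is not a multiple of $\one$.
\item $a^*=0$ occurs only when $d_1=0$, and then $M_{11}=0$ is forced anyway.
\item $n_1=0$ is trivial, because constant row sums give $\rho=d_2$.
\end{itemize}

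\textbf{What the paper's machinery buys.} The convexity and perturbation lemmas are reused for Theorem~\ref{theorem:Mne}. There the degree classes of a minimizer are not known in advance, so there is no fixed partition on which to build your two-valued test vector. Your argument is tailored to a prescribed bi-regular degree partition.

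\textbf{A side remark.} Your equitable-partition justification of $\rho(M)=\rho(B)$ for the construction is cleaner than the paper's. The paper's proof of Lemma~\ref{lemma:bounded_by_rho0} appeals to Corollary~\ref{corollary:x_fixed_on_same_sum_rows}, which formally presupposes that $M$ is already a minimizer.
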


\begin{restatable}{theorem}{MneTheorem}\label{theorem:Mne}
    Given integers $n>0$ and $e \geq 0$, if a matrix $M \in \M_{n,e}$ satisfies $\rho(M) = \rho(\M_{n,e})$, 
    then $M \in \M_{d_1,n_1,d_2,n_2}$, 
    where $d_1 = \lfloor 2e/n \rfloor$, $d_2 = d_1+1$, $n_2 = 2e \bmod n$ and $n_1 = n-n_2$.
\end{restatable}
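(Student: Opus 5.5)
The plan is a \emph{transfer argument}: assume a minimizer is unbalanced and build a competitor in $\M_{n,e}$ whose spectral radius is strictly smaller. Let $M \in \M_{n,e}$ attain $\rho(\M_{n,e})$, and let $r_1,\dots,r_n$ be its row sums. These are non-negative integers with $\sum_i r_i = 2e$. If $\max_i r_i - \min_i r_i \le 1$, then the row sums are forced to be $d_1=\lfloor 2e/n\rfloor$ and $d_1+1$, with exactly $2e \bmod n$ of them equal to $d_1+1$, which is the claim. So assume there are rows $i,j$ with $r_i \ge r_j+2$; the aim is to contradict minimality. First reduce to the irreducible case. $\rho(M)$ is the maximum of $\rho$ over the irreducible diagonal blocks of $M$. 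Any transfer performed inside a block not attaining the maximum, or between blocks, can be arranged to keep $\rho$ unchanged while rebalancing sums. The real work is therefore on a block attaining $\rho(M)$, with a positive Perron vector $x$ and $x^{T} M x = \rho\, x^{T}x$.

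The basic move is a \emph{weight transfer}. For an index $k$ with $M_{ik}>0$, decrease $M_{ik}=M_{ki}$ by $\varepsilon$ and increase $M_{jk}=M_{kj}$ by $\varepsilon$; the diagonal case $k\in\{i,j\}$ is handled analogously. This changes only $r_i$ and $r_j$, by $-\varepsilon$ and $+\varepsilon$. By the Hadamard first-variation formula, the derivative of $\rho$ along this move is $2x_k(x_j-x_i)/x^{T}x$. Repeating the move until a total weight of $1$ has been shifted produces a matrix in $\M_{n,e}$ again, since all row sums stay integral, but with a more balanced row-sum vector. So it suffices to show that $x_j \le x_i$ holds along the whole path, and strictly somewhere, or else that some other choice of pair gives a decrease.

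To control the sign of $x_j - x_i$, I would compare with Theorem~\ref{theorem:Mn1d1n2d2}. The equations $\rho x_i = \sum_k M_{ik}x_k$ give the bounds $\rho x_i \ge r_i \min_k x_k$ and $\rho x_j \le r_j \max_k x_k$. These alone do not order $x_i$ and $x_j$. The idea is to choose $i$ to be a row of maximal sum and $j$ a row of minimal sum, and to use optimality of $M$ within its own degree class (the KKT conditions for minimizing $\rho$ with fixed row sums: $x_a x_b \ge \mu_a+\mu_b$, with equality on the support of $M$) to show that on a minimizer the Perron entries are monotone in the row sums. This is exactly the structure found in Theorem~\ref{theorem:Mn1d1n2d2}, where the low-degree class is independent and the high-degree class is regular. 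Alternatively, one can first use Theorem~\ref{theorem:Mn1d1n2d2} to show that, among matrices with a fixed two-valued sum profile, $\rho$ is a strictly increasing function of the spread $d_2-d_1$ at fixed total $2e$. This step reduces to checking monotonicity of the explicit formula~\eqref{eq:rhoopt_intro}. It suggests proving that $\rho(\M_{\mathbf r})$ is Schur-convex in $\mathbf r$, with each Robin Hood transfer strictly lowering it.

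The main obstacle is precisely this monotonicity of the Perron vector in the row sums for a general, possibly many-valued, row-sum vector. Theorem~\ref{theorem:Mn1d1n2d2} only covers the two-valued case, and a naive perturbation can raise $\rho$ if $x_j > x_i$. My first attempt would be to prove the Schur-convexity statement by induction on the number of distinct row sums. At each step I would merge the two extreme classes through a transfer, and use a Collatz--Wielandt upper bound $\rho(M')\le \max_a (M'y)_a/y_a$ with a suitably modified test vector $y$, rather than the Rayleigh quotient, to get the required upper bound on $\rho$ of the new matrix.
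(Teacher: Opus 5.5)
There is a genuine gap, and it sits exactly at the step you call the main obstacle. Worse, the first-order transfer mechanism cannot close it. At a hypothetical irreducible minimizer with $r_i\ge r_j+2$ one can actually prove $x_i=x_j$ (the contrapositive of Lemma~\ref{lemma:row_sum_diff_le1_cond}). So every transfer from row $i$ to row $j$, including the diagonal variants, has initial derivative proportional to $x_j-x_i$, which is zero. Since $\rho(M+tE)$ is convex in $t$, zero initial slope means $\rho$ never drops below $\rho(M)$ along the segment. The strict sign condition you hope to get from KKT or from Perron-vector monotonicity is therefore incompatible with what is provable at such a point, and the contradiction has to come from elsewhere.

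The missing idea is to transfer by averaging with a permuted copy rather than by moving entries. Let $P$ be the transposition $(i\,j)$ and $\alpha_0=1/(r_i-r_j)\in(0,1)$. The matrix $(1-\alpha_0)M+\alpha_0P^{-1}MP$ has row sums $r_i-1$ and $r_j+1$, with all other rows unchanged, so it lies in $\M_{n,e}$. Convexity then bounds its spectral radius by $\rho(M)$ with no sign information at all. This gives in one line the non-strict Schur-convexity you planned to reach by induction with Collatz--Wielandt. Its equality case, via strict convexity of the Euclidean norm (Lemma~\ref{lemma:two_matrices_same_Perron}), forces $M$ and $P^{-1}MP$ to share a Perron vector, which is where $x_i=x_j$ comes from.

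Strictness, which the theorem needs because it concerns every minimizer, requires a separate structural step:
\begin{enumerate}
\item $x$ is constant on degree classes (Corollary~\ref{corollary:x_fixed_on_same_sum_rows}), so the spread is at most $2$.
\item In the spread-$2$ case, $x$ equals $\chi_0$ on $V_0\cup V_2$ and $\chi_1\neq\chi_0$ on $V_1$.
\item Lemma~\ref{lemma:generalized_perturbation} then makes $M_{|V_1}$ or $M_{|V_0\cup V_2}$ empty.
\item The Perron equations then yield $r+s=-2\rho$ or $\chi_1=0$, both impossible.
\end{enumerate}

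Your reduction to the irreducible case is also inadequate. Rebalancing non-maximal blocks while keeping $\rho$ fixed only produces another minimizer; it says nothing about the given $M$. Consider $M=A\oplus B$ with $A$ $(d+1)$-regular and $B$ $(d-1)$-regular on equally many vertices. Your argument does not exclude this configuration. The block attaining $\rho(M)$ is already balanced, so there is nothing to transfer inside it, and you only claim that inter-block transfers preserve $\rho$, which yields no contradiction.

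The paper handles reducible minimizers by induction on $n$, combining three facts:
\begin{enumerate}
\item $\rho(M_i)\ge\rho_1(\overline{d_i})$ for each component;
\item the explicit upper bound $\rho(\M_{n,e})\le\rho_1(\overline{d})$ (Lemma~\ref{lemma:bounded_by_rho1}, which rests on Theorem~\ref{theorem:Mn1d1n2d2});
\item strict monotonicity of $\rho_1$ on $[1,\infty)$ (Lemma~\ref{lemma:rho1_monotone}).
\end{enumerate}
Together these force every component to have average degree exactly $\overline{d}$ and to be a minimizer itself, so the irreducible result applies componentwise.
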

 
As the two results are invariant under scaling of the number of edges and vertices, $\rho(\M_{n,e})$ depends only on the average degree $\overline{d}=\frac{2e}{n}$, while $\rho(\M_{d_1,n_1,d_2,n_2})$ depends only on $d_1,d_2$ and $\nu=\frac{n_2}{n_1+n_2}$.
In particular, $\rho(\M_{n,e}) = \overline{d}$ when the average degree is an integer.
Otherwise the minimum is obtained on a bi-regular weighted graph where the minimal and maximal degrees differ by one. 

The formulas for $\rho(\M_{n,e})$ and $\rho(\M_{d_1,n_1,d_2,n_2})$ establish new lower bounds on $\rho(\G_{n,e})$ and $\rho(\G_{d_1,n_1,d_2,n_2})$ respectively. 
These new bounds improve upon Hoffman's root mean square bound and upon the entropy based bound in those cases.

Figure~\ref{fig:rho_opt} depicts the new lower bound $\rho(\M_{n,e})$ as a function of the average degree $\overline{d}$. 
The graph on the right depicts $\rho(\M_{n,e})-\overline{d}$ to show the details more clearly,
where $\overline{d}$ is the trivial lower bound on $\rho(\M_{n,e})$. It should be noted that $\rho(\M_{n,e})=1$ for $0 < \overline{d} \leq 1$, 
and that $\rho(\M_{n,e})$ is discontinuous at zero. The gap between the spectral radius and $\overline{d}$ is known as the Collatz–Sinogowitz irregularity index~\cite{von1957spektren}. 
It works well in our figures and it emerges naturally in Theorem~\ref{theorem:rhoopt_residue}, where we establish the precise scaling law of $\rho(\M_{n,e})-\overline{d}$ for large values of $\overline{d}$.
\begin{figure}[h!]
    \begin{subfigure}{0.5\textwidth}
        \includegraphics[width=0.9\textwidth]{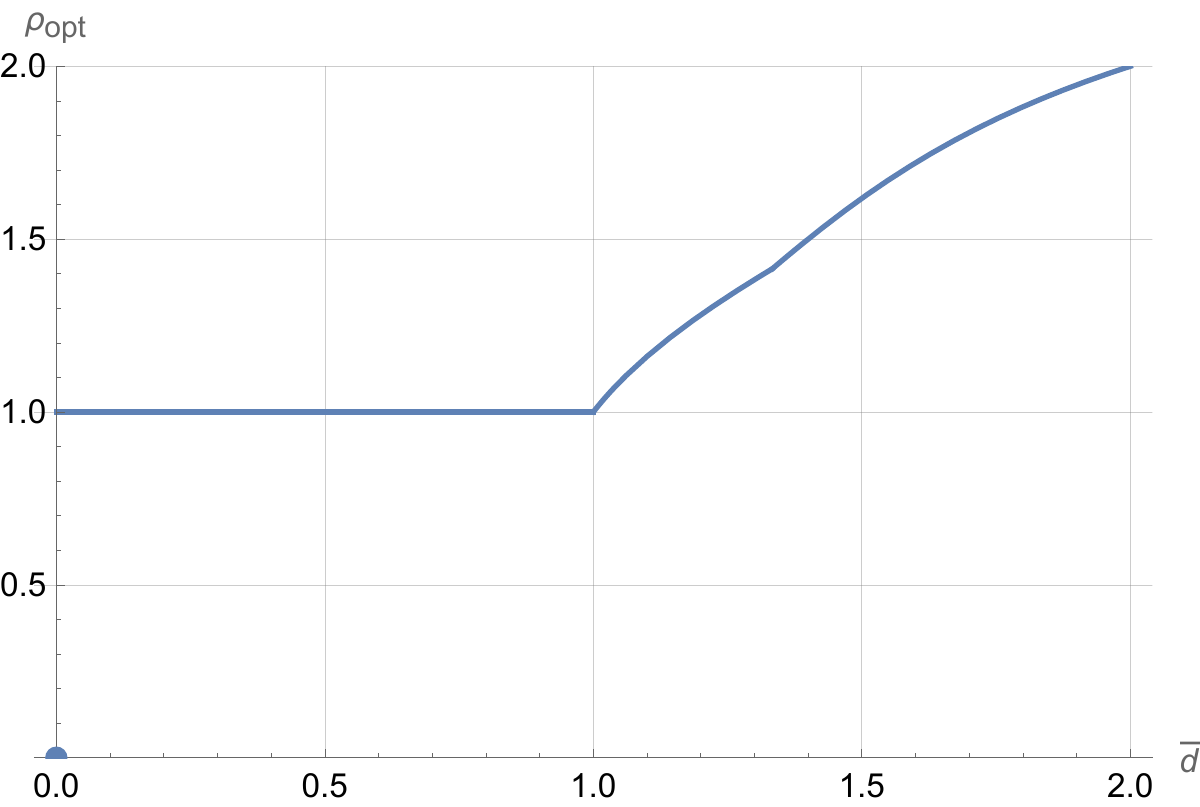}
        \caption{$\rho(\M_{n,e})$ as a function of $\overline{d}$}
    \end{subfigure}\hfill
    \begin{subfigure}{0.5\textwidth}
        \includegraphics[width=\textwidth]{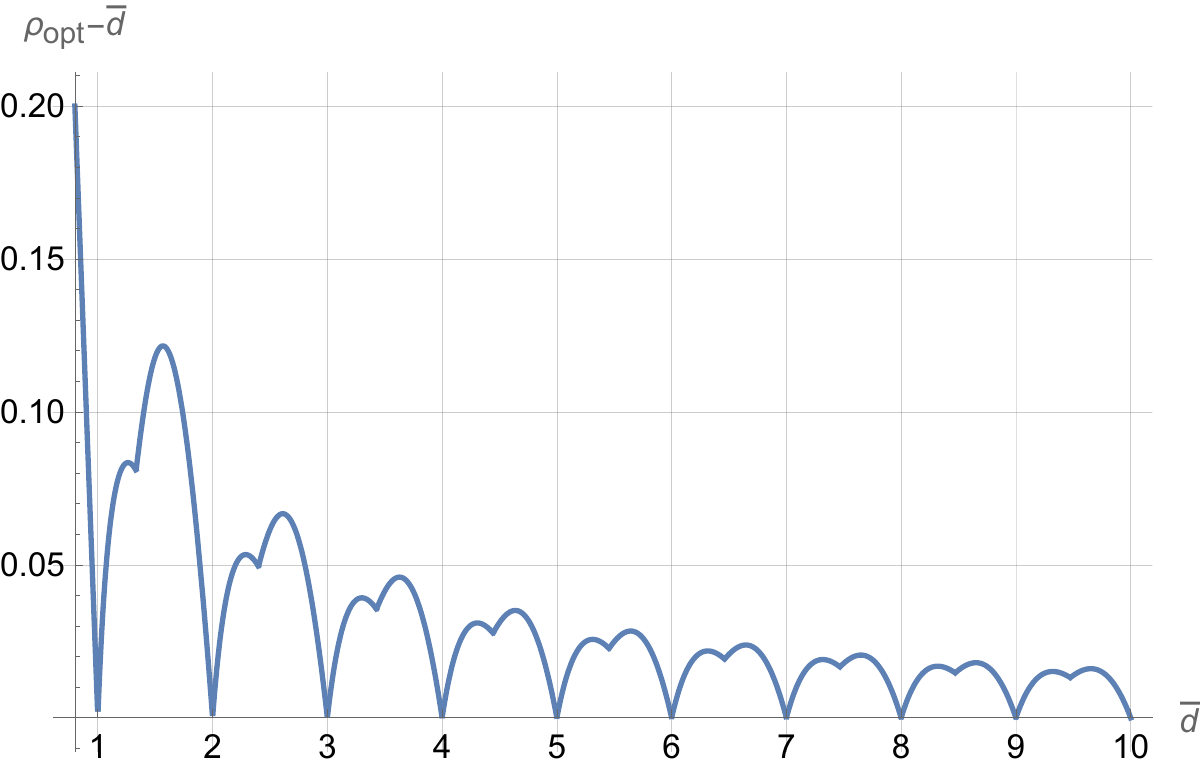}
        \caption{$\rho(\M_{n,e}) - \overline{d}$ as a function of $\overline{d}$}
    \end{subfigure}
    \caption{$\rho(\M_{n,e})$ as a function of $\overline{d} = 2e/n$.}
    \label{fig:rho_opt}
\end{figure}

The last results ask whether the minimum $\rho$ on a weighted graph family is in fact achievable by a simple graph in that family.
For the family $\M_{n,e}$, an affirmative answer implies that Hong's conjecture is true for the specific $n,e$ pair.
By Theorem~\ref{theorem:Mne}, the $\rho$-minimizing weighted graphs in $\M_{n,e}$ are either regular or bi-regular with degree difference of one, implying that the integers $d_1, n_1, d_2, n_2$ are uniquely determined by $n,e$. The following theorem gives an exact characterization of such $n,e$ pairs. 

\begin{restatable}{theorem}{WhenBiregularSimpleGraphTheorem}\label{theorem:when_biregular_simple_graph}
    Let $n_1, n_2$ be positive integers and $d_1, d_2$ be non-negative integers such that $d_1 \neq d_2$ and $n_2 d_2 \ge n_1 d_1$,
    then $\rho(\G_{d_1,n_1,d_2,n_2})=\rho(\M_{d_1,n_1,d_2,n_2})$, 
    if and only if the following conditions hold: (i) $n_2 d_2 + n_1 d_1$ is even, (ii) $n_2$ divides $n_1 d_1$, (iii) $n_2 \ge d_1$, and (iv) $n_2(n_2-1) \ge n_2 d_2 - n_1 d_1$.
\end{restatable}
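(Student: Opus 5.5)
The proof will lean on the characterization in Theorem~\ref{theorem:Mn1d1n2d2}. A simple graph $G\in\G_{d_1,n_1,d_2,n_2}$ is an element of $\M_{d_1,n_1,d_2,n_2}$, and the minimum over the matrix family is attained. So $\rho(\G_{d_1,n_1,d_2,n_2})=\rho(\M_{d_1,n_1,d_2,n_2})$ holds iff some simple graph in $\G_{d_1,n_1,d_2,n_2}$ is $\rho$-minimizing. By the theorem, this means: there is a simple graph with $n_1$ vertices of degree $d_1$ forming an independent set $V_1$, and $n_2$ vertices of degree $d_2$ forming a set $V_2$ whose induced subgraph is regular. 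The question is therefore purely combinatorial. We must decide when there exist a bipartite graph $B$ between $V_1$ and $V_2$ and a regular graph $H$ on $V_2$ such that the degrees work out.

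\textbf{Necessity.} Suppose such a graph exists. Every vertex of $V_1$ sends all $d_1$ of its edges into $V_2$, so the number of $V_1$--$V_2$ edges is $n_1d_1$. Since $H$ is $r$-regular, each vertex of $V_2$ has exactly $d_2-r$ neighbours in $V_1$. Counting the cross edges from the $V_2$ side gives $n_2(d_2-r)=n_1d_1$, so $n_2\mid n_1d_1$; this is (ii). Summing degrees gives $n_1d_1+n_2d_2=2e$, which is (i). A vertex of $V_1$ has $d_1$ distinct neighbours in $V_2$, so $d_1\le n_2$; this is (iii), and it is vacuous when $d_1=0$. Finally, $H$ is a simple $r$-regular graph on $n_2$ vertices with $r=d_2-n_1d_1/n_2$, so $r\le n_2-1$, and multiplying by $n_2$ gives (iv). I would also record that $r\ge 0$ follows from the hypothesis $n_2d_2\ge n_1d_1$.

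\textbf{Sufficiency.} Set $s=n_1d_1/n_2$, which is an integer by (ii), and $r=d_2-s$, which satisfies $0\le r\le n_2-1$ by (iv). I would construct two pieces.

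First, a bipartite graph with left degrees $d_1$ on $n_1$ vertices and right degrees $s$ on $n_2$ vertices. Label $V_2$ cyclically by $\mathbb{Z}_{n_2}$, list the $n_1d_1$ edge slots, and assign the slots of the $j$-th $V_1$ vertex to the consecutive residues $(j-1)d_1,\dots,jd_1-1 \pmod{n_2}$. Since $d_1\le n_2$ by (iii), each $V_1$ vertex hits distinct vertices, so there are no multi-edges. Since the total $n_1d_1$ is a multiple of $n_2$, every residue is hit exactly $s$ times. Alternatively one can cite Gale--Ryser.

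Second, an $r$-regular simple graph on $n_2$ vertices. This exists iff $r\le n_2-1$ and $rn_2$ is even. Here $rn_2=n_2d_2-n_1d_1\equiv n_2d_2+n_1d_1 \pmod 2$, so evenness follows from (i). The construction is a circulant: connect $i$ to $i\pm k$ for $k=1,\dots,\lfloor r/2\rfloor$, and when $r$ is odd (so $n_2$ is even) add the perfect matching $i\leftrightarrow i+n_2/2$.

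The union of the two pieces is simple and realizes the required structure, so by Theorem~\ref{theorem:Mn1d1n2d2} it is $\rho$-minimizing.

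The main obstacle is not these constructions, which are routine, but the opening reduction. It needs the equivalence in Theorem~\ref{theorem:Mn1d1n2d2}, including the case $n_1d_1\le n_2d_2$ orientation assumed here. It also needs care with degenerate cases: $d_1=0$, and $d_1>d_2$. When $d_1>d_2$, the hypothesis $n_2d_2\ge n_1d_1$ still lets the theorem apply as stated, but I would verify that the "empty on $d_1$ side" characterization is used with the correct labeling.
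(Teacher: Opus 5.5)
Your proof is correct and follows the paper's argument. Both use Theorem~\ref{theorem:Mn1d1n2d2} to reduce the statement to the existence of two pieces: an $r$-regular simple graph on $V_2$, and a bi-regular bipartite graph between $V_1$ and $V_2$ with degrees $d_1$ and $n_1d_1/n_2$. Conditions (i)--(iv) are then read off from these two pieces. The only difference is technique: you show necessity by direct counting and existence by explicit cyclic-slot and circulant constructions, whereas the paper gets both directions at once from the Tripathi--Vijay and Berger realization criteria in Lemmas~\ref{lemma:simple_connected_regular} and~\ref{lemma:simple_bipartite}.
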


It should be noted that a necessary and sufficient condition for the existence of a $\rho$-minimizing graph that is also connected can be characterized.
See Theorem~\ref{theorem:when_biregular_simple_connected_graph} for more details. 

Although the above theorem gives a necessary and sufficient condition for the existence of such graphs, it is rather opaque.
The following two theorems are an attempt at a fuller characterization of such $n,e$ pairs. See Section~\ref{sect:disrete_graphs} for more details.

\begin{restatable}{theorem}{NuSimpleGraphTheorem}\label{theorem:nu_simple_graph}
    Let $d_1, d_2$ be two positive integers such that $d_1 \neq d_2$, and let $\nu$ be a rational number in the open interval $(0,1)$. 
    Then, a necessary and sufficient condition for the existence of a positive integer $n$ such that $\rho(\G_{d_1,n_1,d_2,n_2})=\rho(\M_{d_1,n_1,d_2,n_2})$ with $n_1=(1-\nu)n$ and $n_2 = \nu n$, is that $\nu$ is in the set
    \begin{equation}\label{eq:nu_set}
        \largeNu_{d_1,d_2} = 
            \left\{ \frac{i}{d_2+i} : i = 1,\ldots,d_1-1\right\} 
                \cup  \left\{ \frac{d_1}{d_1+i} : i = 1,\ldots,d_2\right\}.
    \end{equation}
    If $\nu \in \largeNu_{d_1,d_2}$, then $\rho(\G_{d_1,n_1,d_2,n_2})=\rho(\M_{d_1,n_1,d_2,n_2})$ holds for infinitely many values of $n$.
\end{restatable}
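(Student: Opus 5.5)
The plan is to reduce Theorem~\ref{theorem:nu_simple_graph} to the arithmetic criterion of Theorem~\ref{theorem:when_biregular_simple_graph}. That criterion is stated under the normalization $n_2 d_2 \ge n_1 d_1$, so I split into two cases according to which class carries the larger total degree. Since $n_1=(1-\nu)n$ and $n_2=\nu n$, the inequality $n_2d_2\ge n_1d_1$ is equivalent to $\nu d_2 \ge (1-\nu)d_1$. This does not depend on $n$, so the case is determined by $\nu$ alone. Throughout, $n$ must be a positive integer with $\nu n\in\N$, so both $n_1,n_2$ are positive integers.

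\emph{Case $\nu d_2 \ge (1-\nu) d_1$.} Here Theorem~\ref{theorem:when_biregular_simple_graph} applies as stated, and I examine its four conditions as functions of $n$.
\begin{itemize}
\item[(ii)] We need $n_2 \mid n_1 d_1$, i.e.\ $k := n_1d_1/n_2 = d_1(1-\nu)/\nu$ is an integer. This ratio is independent of $n$. Solving gives $\nu = d_1/(d_1+k)$, and $0<\nu<1$ forces $k\ge 1$. The case hypothesis $\nu d_2\ge (1-\nu)d_1$ reads $k\le d_2$. So (ii) holds exactly when $\nu\in\{d_1/(d_1+i): 1\le i\le d_2\}$.
\item[(iii)] The condition $n_2=\nu n\ge d_1$ holds for all large $n$.
\item[(iv)] Dividing by $n_2$ turns it into $n_2-1\ge d_2-k$, which also holds for all large $n$.
\item[(i)] We need $n_2d_2+n_1d_1$ even. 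If $q$ is the reduced denominator of $\nu$, this holds whenever $n$ is a multiple of $2q$, since then $n_1,n_2$ are both even.
\end{itemize}
Hence in this case a suitable $n$ exists iff $\nu$ lies in the second part of $\largeNu_{d_1,d_2}$. When it does, every sufficiently large multiple of $2q$ works, which gives infinitely many $n$.

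\emph{Case $\nu d_2 < (1-\nu)d_1$.} I relabel by swapping the roles of $(d_1,n_1)$ and $(d_2,n_2)$. The families $\G$ and $\M$ are symmetric under this swap, so the equality $\rho(\G)=\rho(\M)$ is unaffected, and Theorem~\ref{theorem:when_biregular_simple_graph} now applies to the swapped parameters. The divisibility condition becomes $n_1\mid n_2 d_2$, i.e.\ $j:=d_2\nu/(1-\nu)\in\N$. This gives $\nu=j/(d_2+j)$ with $j\ge1$. The strict case inequality translates to $j<d_1$, so $1\le j\le d_1-1$. The remaining conditions (i), (iii), (iv) are handled exactly as before: they hold for all large multiples of $2q$. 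So in this case existence holds iff $\nu$ lies in the first part of $\largeNu_{d_1,d_2}$.

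Combining the two cases yields the stated characterization together with the infinitude claim. The boundary $\nu d_2=(1-\nu)d_1$ corresponds to $\nu=d_1/(d_1+d_2)$, which is $i=d_2$ in the second set, so the two cases together cover every $\nu$ consistently.

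I expect no serious obstacle. The only delicate points are bookkeeping. First, the swap must be valid: the hypotheses of Theorem~\ref{theorem:when_biregular_simple_graph} need both $n_i$ positive, which holds since $\nu\in(0,1)$, and $d_1\neq d_2$, which is given. Second, the strict versus non-strict inequality at the boundary must be tracked so that the endpoints $i=d_2$ and $j=d_1$ are assigned to the correct set. Third, I must check that the divisibility in (ii) genuinely depends only on $\nu$, so it cannot be repaired by choosing a different $n$. This holds because $n_1d_1/n_2$ is a fixed rational number determined by $\nu$.
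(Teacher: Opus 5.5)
Your proposal is correct and follows essentially the same route as the paper. Both reduce to Theorem~\ref{theorem:when_biregular_simple_graph}, observe that the divisibility condition (ii) depends only on $\nu$ and forces $\nu=d_1/(d_1+i)$ (or, after swapping the two classes, $\nu=i/(d_2+i)$), and note that (i), (iii), (iv) hold for all sufficiently large multiples of twice the denominator of $\nu$. Your explicit tracking of the strict versus non-strict dominance inequality, which places the endpoint $i=d_2$ in the second set and caps the first set at $d_1-1$, is a bit more careful than the paper's write-up.
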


\begin{note}
    Theorem~\ref{theorem:Mne} implies that for a graph $G$ with an average degree $\overline{d} = \frac{2e}{n}\ge 1$ such that $\rho(G) = \rho(\M_{n,e})$, the difference between the minimal and maximal degrees is at most one.
    Applying Theorem~\ref{theorem:nu_simple_graph} with $d_1=\lfloor \overline{d} \rfloor$ and $d_2 = d_1+1$ implies that such a graph exists if and only if $\overline{d}$ is in the following list:
    \begin{equation}\label{eq:average_degree_sequence}
      1,\, 1\tfrac{1}{3}, \, 1\tfrac{1}{2}, \, 2, \, 2\tfrac{1}{4}, \, 2\tfrac{2}{5}, \, 2\tfrac{1}{2}, \, 2\tfrac{2}{3}, \, 3, \, 3\tfrac{1}{5}, \,  3\tfrac{1}{3}, \, 3\tfrac{3}{7}, \, 3\tfrac{1}{2}, \, 3\tfrac{3}{5}, \, 3\tfrac{3}{4},\,\ldots
    \end{equation}
\end{note}

Finally, we study the number of $(n,e)$ pairs for which an optimal continuous relaxation can be realized by a discrete simple graph, thereby providing an affirmative answer to Hong's conjecture for those specific pairs.
In~\cite{cioaba2024minimum} Cioab\u{a} et al. successfully proved Hong's conjecture for a linear number of edge values, primarily in dense regimes where $e \ge \binom{n-1}{2}-2$ alongside specific sparse cases like $e = \frac{n^2}{4}-1$. This leaves Hong's original question open for the vast majority of $(n,e)$ pairs.  

Let $E(n)$ denote the number of non-trivial edge values $e$ (where $n$ does not divide $2e$) such that $\rho(\G_{n,e}) = \rho(\M_{n,e})$.
In Section~\ref{section:num_edges_simple_graph}, we establish an exact number-theoretic formula for $E(n)$ governed by Pillai's arithmetical function. Rather than presenting the full algebraic formulation here, we state its primary consequences. We demonstrate that $E(n)$ always grows at least linearly, but experiences super-linear growth for highly composite numbers.

\begin{restatable}{theorem}{NumEdgesSimpleGraphTheorem}\label{theorem:num_edges_simple_graph}
    For any integer $n \ge 3$, let $E(n)$ be the number of distinct values of $e$ such that $n$ does not divide $2e$ and $\rho(\G_{n,e}) = \rho(\M_{n,e})$. Then the following hold:
    \begin{enumerate}
        \item $E(n) \ge \lfloor\frac{3n-5}{2}\rfloor$.
        \item The lower bound is tight if and only if $n=p$ or $n=2p$ for some prime number $p$.
        \item For an infinite family of highly composite, square-free integers, $E(n) = \Omega(n \log n)$.
    \end{enumerate}
\end{restatable}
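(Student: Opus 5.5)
The plan is to reduce the count $E(n)$ to a lattice-point count via Theorem~\ref{theorem:when_biregular_simple_graph}, and then evaluate that count with gcd sums.

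\textbf{Parametrization.} Fix $n\ge 3$ and an $e$ with $n\nmid 2e$. By Theorem~\ref{theorem:Mne} the parameters are forced: $d=\lfloor 2e/n\rfloor$, $r=2e \bmod n\in\{1,\dots,n-1\}$, so that $2e=nd+r$. The realizing family is $\G_{d,n-r,d+1,r}$. Conversely, each pair $(d,r)$ with $nd+r$ even and $0\le d\le n-2$ gives exactly one admissible $e$. So $E(n)$ is the number of such pairs satisfying the conditions of Theorem~\ref{theorem:when_biregular_simple_graph}.

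Applying that theorem requires deciding which class plays the role of the ``small total'' side, i.e.\ the independent class. There are two regimes:
\begin{itemize}
\item If $(n-r)d\le r(d+1)$, the low-degree class is independent. Condition (i) is automatic, since the degree sum is $2e$. The remaining conditions become: $r\mid (n-r)d$, i.e.\ $r\mid nd$; $r\ge d$; and $r(r-1)\ge r(d+1)-(n-r)d$.
\item Otherwise, the roles are swapped. The high-degree class of size $r$ is independent, and the analogous conditions hold with $(n-r,d)$ and $(r,d+1)$ interchanged.
\end{itemize}

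\textbf{Divisibility and counting.} Write $g=\gcd(r,n)$. Then $r\mid nd$ holds iff $\tfrac{r}{g}\mid d$. In the first regime $d\le r$, so the admissible $d$ are $d=k\,r/g$ with $0\le k\le g$. The number of solutions for fixed $r$ is therefore about $g+1$, cut down by the parity condition and by the inequality (iv), which excludes the very dense end. I will carry out the same bookkeeping for the swapped regime, where the relevant divisibility is by $n-r$ with $\gcd(n-r,n)=\gcd(r,n)$. Summing over $r$ then expresses $E(n)$ in terms of $\sum_{r=1}^{n-1}\gcd(r,n)=P(n)-n$, where $P$ is Pillai's function $P(n)=\sum_{m\mid n} m\,\varphi(n/m)$. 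This is the exact number-theoretic formula, up to explicit boundary and parity corrections, likely split by the parity of $n$.

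\textbf{Deriving the three claims.}
\begin{enumerate}
\item For the lower bound, use only the trivial contributions available for every $r$. These are $k=0$ (i.e.\ $d=0$, the matchings, when parity allows) and $k=g$ (i.e.\ $d=r$), together with their swapped counterparts. Counting these carefully, with the parity restriction, should give exactly $\lfloor (3n-5)/2\rfloor$.
\item For tightness, note that any $r$ with $g>1$ or $g>2$ contributes extra solutions $0<k<g$. For $n=p$ and $n=2p$ every such $g$ is $1$, $2$, $p$ or $2p$, and the nontrivial values of $r$ are absent or only parity-killed. Hence equality holds iff $n\in\{p,2p\}$. The converse direction exhibits an extra $r$ with $\gcd(r,n)\ge 3$, or $\gcd=2$ with compatible parity, whenever $n$ has another factorization.
\item For the growth rate, use $P(n)/n=\sum_{m\mid n}\varphi(m)/m=\prod_{p\mid n}(2-1/p)$ for square-free $n$. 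Taking $n$ to be a primorial makes this product $\gg \log n$, which gives $E(n)=\Omega(n\log n)$ once the corrections are shown to be $O(n)$.
\end{enumerate}

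\textbf{Main obstacle.} I expect the hardest part to be the exact bookkeeping. This means combining the two regimes and the boundary $(n-r)d=r(d+1)$, which must not be double counted, with inequality (iv) and the parity of $nd+r$. I would first tabulate small $n$ by hand to fix the correction terms, and then prove the formula with separate cases for $n$ odd and $n$ even.
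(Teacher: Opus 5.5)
\textbf{The gap is in parts~1 and~2 for even $n$.} Your reduction matches the paper's: fix $r=n_2$, write $d=k\,r/g$ in the unswapped regime, use $\gcd(n-r,n)=g$ in the swapped one, and sum gcd's into Pillai's function. The trouble is your baseline. You take it to be the solutions with $k\in\{0,g\}$ plus their swapped counterparts. You then argue tightness for $n=2p$ by saying the solutions with $0<k<g$ are ``absent or only parity-killed''.

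For even $n$ only even $r$ survive parity, so $g\ge 2$. The solution $k=1$ (i.e.\ $d=r/g$) then exists whenever $(n-2r)/g\le 1$. It is never parity-killed, since $nd+r$ is even.

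\emph{Case $n=4=2\cdot 2$.} The only contributing value is $r=2$, with $g=2$. Its solutions are $d=0,1,2$: an edge plus two isolated vertices, $P_4$, and $K_4$ minus an edge. So $E(4)=3=\lfloor 7/2\rfloor$ is tight, yet the middle solution has $0<k<g$.

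\emph{Case $n=6$.} The solutions are $d\in\{0,2,4\}$ at $r=4$ and $d\in\{0,1,3\}$ at $r=2$. Your trivial set captures only four of them ($d=0,4$ and $d=0,3$), while $E(6)=6=\lfloor 13/2\rfloor$.

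So, carried out as described, your count undershoots the bound for every even $n\ge 4$. If the extras are counted as extras, it would also wrongly declare $n=2p$ non-tight.

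\textbf{What is missing is the exact per-$r$ count} (the paper's per-$n_2$ table).
\begin{itemize}
\item For $n$ even it is $g+1$ for every even $r$, uniformly across the three dominance subcases ($g+1$, $2+(g-1)$, $1+g$), and $0$ for odd $r$.
\item For $n$ odd it is $g+1$, $(g+3)/2$, $(g+1)/2$ or $g$, according to the parity of $r$ and the side of $n/2$. The exception is $r=n-1$, which loses one candidate to condition (iv).
\end{itemize}
Each of these is strictly increasing in $g$. Hence $E(n)\ge\lfloor(3n-5)/2\rfloor$, with equality iff every contributing $r$ has the least possible gcd. That means either $g=2$ for all even $r$, i.e.\ $n/2$ prime, or $g=1$ for all $r$, i.e.\ $n$ prime.

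For odd $n$ the least gcd is $1$. There every solution does have $k\in\{0,g\}$ or swapped $k'=g$, which is why your intuition works in that case but not for even $n$. Once you have the table, this monotonicity argument is actually a cleaner route to parts~1--2 than the paper's. The paper instead passes through the closed formula and the bounds $P(m)\ge 2m-1$ and $f(n)\le n-1+\chi_4(n)$.

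\textbf{Part~3 is fine} and more self-contained than the paper's appeal to T\'oth's maximal-order result. For square-free $n$, $P(n)/n=\prod_{p\mid n}(2-1/p)\ge (3/2)^{\omega(n)}$, which dwarfs $\log n$ along primorials. Since primorials are even, the quantity that actually appears is $P(n/2)$. This costs nothing, because $P(n/2)\ge P(n)/3$.
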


This super-linear scaling demonstrates that exact continuous relaxations can be realized as discrete simple graphs at a surprising rate across the density spectrum. 
While Theorem~\ref{theorem:num_edges_simple_graph} isolates the maximal peaks of $E(n)$ for highly composite numbers, the asymptotic average of Pillai's function guarantees that $E(n)$ grows as $\Theta(n \log n)$ on average, see Remark~\ref{remark:average_num_edges_simple_graph}. 
This still leaves a gap compared to the $O(n^2)$ possible values for $e$, leaving Hong's general conjecture open for the majority of pairs.

\section{Weighted Graphs Preliminaries}

In this section we set up the stage for the relaxation of the minimal spectral radius problem to weighted graphs.
Namely, symmetric non-negative matrices with some row sum constraints. We use matrix and graph terminology interchangeably throughout this work, where vertices are row numbers, matrix entries are edge weights, and vertex degrees are row sums. We say that an edge exists if the corresponding entry is non-zero, that the graph is connected if the matrix is irreducible, and that the matrix blocks are the connected components of the graph. 
We let $\MnR, \MnC$ denote the set of $n \times n$ matrices over the respective field, and $[n]$ denote the set of integers from $1$ to $n$. 
We are interested in the following families of weighted graphs on $n$ vertices:
\begin{itemize}
    \item $\M_{n,e}$
    where $e$ is a non-negative integer and the degrees are required to be integral. The total sum of matrix entries is $2e$, or equivalently, the average degree is $2e/n$.
    \item $\M_{d_1,n_1,d_2,n_2}$, bi-regular graphs, where there are $n_1$ degree $d_1$ vertices and $n_2$ degree $d_2$ vertices, and $n_1,n_2$ are non-negative integers with $n=n_1+n_2$ and $d_1,d_2$ are distinct non-negative real numbers. Given a graph $G$ in this family, we usually refer to the set of degree $d_i$ vertices as $V_i$ for $i=1,2$.
    \item $\M_\dg$, graphs with degree sequence $\dg=(d_1,\ldots,d_n)$, where the real number $d_i \geq 0$ is the degree of vertex $i$.
\end{itemize}

It should be noted that for $\M$ being any of the above matrix sets, the minimal spectral radius $\rho(\M)$ is attained, as these are compact sets. We start with some fundamental facts from matrix theory and build upon them towards the proofs of our main results.

\begin{proposition}[Perron–Frobenius, Theorem 8.4.4 from~\cite{horn2012matrix}]\label{proposition:perron}
    Given a non-negative irreducible matrix $M \in \MnR$ for $n \ge 2$ the spectral radius $\rho(M)$ is a positive and algebraically simple eigenvalue.
    Furthermore, up to scaling there are unique vectors $x,y$ so that $M x = \rho(M) x$ and $y^T M = \rho(M) y^T$;
    these vectors are strictly positive.
\end{proposition}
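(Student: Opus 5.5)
We prove this standard statement in the usual Collatz–Wielandt way, using irreducibility only through one fact: $(I+M)^{n-1}$ is entrywise strictly positive. To see it, note that the $(i,j)$ entry of $(I+M)^{n-1}$ is a positive combination of the weights of walks of length at most $n-1$ from $i$ to $j$ in the graph of $M$. Irreducibility means this graph is strongly connected, so such a walk exists for every pair $i,j$. Consequently, for every non-negative non-zero vector $v$, the vector $(I+M)^{n-1}v$ is strictly positive. We also note that $M \neq 0$, since an irreducible matrix with $n \ge 2$ has a non-zero off-diagonal entry.

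\textbf{Step 1: a positive eigenvector for $\rho(M)$.} For non-negative non-zero $x$, set $r(x)=\min_{i:\,x_i>0} (Mx)_i/x_i$. Restricted to the compact set $\{x>0,\ \sum x_i=1\}$, the set of vectors $(I+M)^{n-1}x$ with $x$ in the simplex is compact, and $r$ is continuous on it. Since $r((I+M)^{n-1}x)\ge r(x)$, the supremum $r^*$ of $r$ over the simplex is attained at some $x$, which we may take strictly positive.

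We claim $Mx = r^* x$. Suppose instead $Mx - r^*x$ is non-negative and non-zero. Applying $(I+M)^{n-1}$, which commutes with $M$, gives a strictly positive vector. Hence $u=(I+M)^{n-1}x$ satisfies $Mu > r^*u$ entrywise, so $r(u)>r^*$, contradicting maximality.

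Next, $r^*$ is at least the modulus of every eigenvalue. If $Mz=\lambda z$, then $|\lambda|\,|z|\le M|z|$, so $|\lambda|\le r(|z|)\le r^*$. Thus $r^*=\rho(M)$. Positivity of $\rho(M)$ follows from $(1+\rho)^{n-1}x=(I+M)^{n-1}x>0$ together with $Mx\neq0$, as $M \ne 0$ and $x>0$. Applying the same argument to $M^T$, which is also irreducible, gives a strictly positive left eigenvector $y$.

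\textbf{Step 2: geometric simplicity and uniqueness.} Let $z$ be a real eigenvector for $\rho$ that is not a multiple of $x$; complex eigenvectors reduce to this case via real and imaginary parts. Choose $t$ so that $w = x - t z$ is non-negative, non-zero, and has some zero coordinate. Then $w$ is a $\rho$-eigenvector, so $(1+\rho)^{n-1}w=(I+M)^{n-1}w>0$. This contradicts $w$ having a zero coordinate.

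\textbf{Step 3: algebraic simplicity.} This is the step needing a little care. Suppose the eigenvalue $\rho$ had a nontrivial Jordan block. By Step 2 its eigenspace is spanned by $x$, so there would be a $w$ with $(M-\rho I)w = x$. Multiplying on the left by $y^T$ gives $y^Tx = y^T(M-\rho I)w = 0$. This is impossible since $x,y>0$.

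Uniqueness of $x$ and $y$ up to scaling is exactly Step 2, applied to $M$ and to $M^T$.
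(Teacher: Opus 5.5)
Your proof is correct. The paper gives no proof of this proposition: it imports it as a black box from Horn--Johnson (Theorem 8.4.4). There the result is obtained by reduction to Perron's theorem for strictly positive matrices, using the positivity of $(I+M)^{n-1}$, with algebraic simplicity transferred from the eigenvalue $(1+\rho(M))^{n-1}$ of that positive matrix.

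You instead give a self-contained Wielandt-style argument. The Collatz--Wielandt function $r$ produces a positive eigenvector and identifies its eigenvalue with $\rho(M)$ directly. The ``shift until a coordinate vanishes'' trick gives geometric simplicity and uniqueness. Pairing a putative Jordan-chain vector with the positive left eigenvector, $y^Tx = y^T(M-\rho I)w = 0$, gives algebraic simplicity. Your route needs no separate theory of positive matrices and uses irreducibility only through $(I+M)^{n-1}>0$. The citation buys brevity, which is all the paper needs, since the proposition is only used as a tool.

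Two small presentational points:
\begin{itemize}
\item The set $\{x>0,\ \sum_i x_i=1\}$ is not compact; you mean the closed simplex $\Delta=\{x\ge 0,\ \sum_i x_i=1\}$. The attainment step should then be spelled out. The function $r$ is continuous on the compact set $K=(I+M)^{n-1}\Delta$, whose elements are strictly positive vectors. Also $r(cu)=r(u)$ for $c>0$, and $r((I+M)^{n-1}x)\ge r(x)$: apply the non-negative matrix $(I+M)^{n-1}$, which commutes with $M$, to $Mx\ge r(x)x$. Together these give $\sup_\Delta r=\max_K r$.
\item The identity $(1+\rho)^{n-1}x=(I+M)^{n-1}x$ plays no role in showing $\rho(M)>0$. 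The facts $\rho(M)=r^*\ge 0$ and $\rho(M)x=Mx\neq 0$ already suffice.
\end{itemize}
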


\begin{proposition}  
    The function $\rho$ is convex on the set of symmetric matrices in $\MnR$.
\end{proposition}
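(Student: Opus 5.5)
The plan is to use the variational characterization of the spectral radius of a real symmetric matrix. For a symmetric $M \in \MnR$, the Rayleigh--Ritz theorem gives
\[
\lambda_1(M) = \max_{\|x\|_2 = 1} x^T M x .
\]
For each fixed unit vector $x$, the map $M \mapsto x^T M x$ is linear in $M$. Hence $\lambda_1$ is a pointwise supremum of linear functions on the vector space of symmetric matrices, and is therefore convex there.

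The catch is that $\rho(M)$ denotes the spectral radius $\max_i |\lambda_i(M)|$, which need not equal $\lambda_1(M)$ for a general symmetric matrix. I would handle this in either of two ways.

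\begin{enumerate}
\item Use the identity
\[
\rho(M) = \max\bigl(\lambda_1(M), -\lambda_n(M)\bigr) = \max_{\|x\|_2=1} |x^T M x| ,
\]
valid because $M$ is symmetric and hence diagonalizable in an orthonormal basis. Each map $M \mapsto |x^T M x|$ is convex, being the absolute value of a linear function. A supremum of convex functions is convex, so $\rho$ is convex on all symmetric matrices.
\item Alternatively, note that $\rho(M) = \|M\|_2$, the operator norm, for symmetric $M$. Convexity then follows from the triangle inequality and homogeneity:
\[
\|tA + (1-t)B\|_2 \le t\|A\|_2 + (1-t)\|B\|_2 \quad \text{for } t \in [0,1].
\]
\end{enumerate}

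Finally, I would remark that on the non-negative symmetric matrices, which are the ones relevant to the families $\M$ in the paper, Perron--Frobenius (Proposition~\ref{proposition:perron}, extended to reducible matrices by continuity or blockwise) gives $\rho(M) = \lambda_1(M)$. So either form of the argument applies in the intended setting.

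There is no real obstacle here. The only point needing care is the first step of option 1: justifying $\rho(M) = \max_{\|x\|=1}|x^T M x|$ via the spectral theorem, since this identity fails for non-symmetric matrices.
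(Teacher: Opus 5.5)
Your proposal is correct, and your option 2 is exactly the paper's argument: for symmetric (hence normal) $M$ one has $\rho(M)=\|M\|_2$, and convexity follows from positive homogeneity together with the triangle inequality for the operator norm. Option 1, which writes $\rho(M)=\max_{\|x\|_2=1}|x^TMx|$ as a supremum of convex functions, is an equally valid variant (the values of $x^TMx$ on the unit sphere fill $[\lambda_n(M),\lambda_1(M)]$). The closing Perron--Frobenius remark is unnecessary, since both arguments already cover all symmetric matrices.
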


\begin{proof}
    For any constant $c \geq 0$, we have $\rho(c M) = c \rho(M)$.
    Furthermore, as $M$ is a normal matrix,
    $\rho(M) = \|M\|_2 = \underset{\|x\|_2=1}{\max} \|Mx\|_2$, implying that $\rho$ is sub-additive, 
    \begin{equation*}\begin{split}
        \rho(M_1+M_2) &= \underset{\|x\|_2=1}{\max} \|(M_1+M_2)x\|_2 \\
        &\le \underset{\|x\|_2=1}{\max} \|M_1x\|_2 + \underset{\|x\|_2=1}{\max} \|M_2x\|_2 = \rho(M_1) + \rho(M_2).
    \end{split}\end{equation*}
\end{proof}

\begin{lemma}\label{lemma:two_matrices_same_Perron}
        Let $\M \subset \MnR$ be a set of non-negative symmetric matrices.
        Let $M_0, M_1 \in \M$ be two irreducible matrices such that $\rho(M_0) = \rho(M_1)= \rho(\M)$, and assume that
        $M^{(\alpha)} = (1-\alpha) M_0 + \alpha M_1 \in \M$ for some $0 < \alpha < 1$. 
        Then $M_0, M_1$ and $M^{(\alpha)}$ have the same Perron eigenvector.
\end{lemma}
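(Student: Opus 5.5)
Let $\rho^* = \rho(\M)$ and let $x$ be the unit Perron eigenvector of $M^{(\alpha)}$. The plan is to use the variational characterization of the spectral radius for symmetric matrices, $\rho(M) = \max_{\|z\|_2=1} z^T M z$, together with the minimality of $\rho^*$ over $\M$.

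First, since $M^{(\alpha)} \in \M$, we have $\rho(M^{(\alpha)}) \ge \rho^*$. By convexity of $\rho$ (the preceding proposition), $\rho(M^{(\alpha)}) \le (1-\alpha)\rho(M_0) + \alpha\rho(M_1) = \rho^*$. Hence $\rho(M^{(\alpha)}) = \rho^*$.

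Note also that $M^{(\alpha)}$ is irreducible. It is non-negative, and its support contains the support of $M_0$ because $1-\alpha>0$. So by Proposition~\ref{proposition:perron} its Perron eigenvector $x$ is strictly positive and unique up to scaling. We then write
\[
\rho^* = x^T M^{(\alpha)} x = (1-\alpha)\, x^T M_0 x + \alpha\, x^T M_1 x .
\]
Each Rayleigh quotient satisfies $x^T M_i x \le \rho(M_i) = \rho^*$. The right-hand side is therefore a convex combination, with strictly positive weights, of two numbers each at most $\rho^*$. Since it equals $\rho^*$, both must equal $\rho^*$: $x^T M_0 x = x^T M_1 x = \rho^*$.

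The last step turns equality in the Rayleigh quotient into an eigenvector statement. For a symmetric matrix $M_i$, if a unit vector attains $x^T M_i x = \lambda_{\max}(M_i)$, then expanding $x$ in an orthonormal eigenbasis shows that $x$ lies in the eigenspace of $\lambda_{\max}$. Here $\lambda_{\max}(M_i) = \rho(M_i)$, because for non-negative matrices the Perron root is the largest eigenvalue. Thus $M_i x = \rho^* x$. Since $M_i$ is irreducible, Proposition~\ref{proposition:perron} says this eigenspace is one-dimensional and spanned by the positive Perron vector. So $x$ is the Perron eigenvector of $M_0$ and of $M_1$ as well as of $M^{(\alpha)}$.

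The main point needing care is this final step. We must justify that the top eigenvalue of each $M_i$ equals its spectral radius, and that a maximizer of the Rayleigh quotient is an eigenvector; both follow from the spectral theorem and Perron--Frobenius. Irreducibility is what guarantees the shared vector is \emph{the} Perron vector rather than just some vector in a larger eigenspace.
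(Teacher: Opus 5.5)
Your proof is correct. It has the same skeleton as the paper's proof: convexity forces $\rho(M^{(\alpha)})=\rho^*$, and then the unit Perron vector $x$ of $M^{(\alpha)}$ is tested against $M_0$ and $M_1$. The mechanism that forces equality is different, though.

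\textbf{The paper's route.} The paper works with $\|M_i x\|_2$. It bounds $\rho^* = \|M^{(\alpha)}x\| \le (1-\alpha)\|M_0x\| + \alpha\|M_1x\|$ using the triangle inequality and $\|M_i x\| \le \rho(M_i)$. It then invokes strict convexity of the Euclidean norm to get the vector identity $M^{(\alpha)}x = M_0x = M_1x$. From this, $M_i x = \rho^* x$ can be read off directly.

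\textbf{Your route.} You use the quadratic form $x^T M x$, which is linear in $M$. So $x^T M_0 x = x^T M_1 x = \rho^*$ follows immediately, with no strict-convexity argument. The price is an extra spectral-theorem step: a unit maximizer of the Rayleigh quotient lies in the top eigenspace. That step does not even need Perron--Frobenius to identify $\lambda_{\max}(M_i)$ with $\rho(M_i)$, because the chain $\rho^* = x^T M_i x \le \lambda_{\max}(M_i) \le \rho(M_i) = \rho^*$ forces it.

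Your version also makes explicit two points the paper leaves implicit. First, $M^{(\alpha)}$ is irreducible, so its Perron vector is well defined and positive. Second, irreducibility of each $M_i$ makes its $\rho^*$-eigenspace one-dimensional, so $x$ is \emph{the} Perron vector of each matrix rather than merely some vector in that eigenspace.
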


\begin{proof}
    Let $\rhomin = \rho(\M)$. 
    Then by the convexity of $\rho$ on $\M$,
    \begin{equation*}
        \rhomin \le \rho(M^{(\alpha)}) \le (1-\alpha) \rho(M_0) + \alpha \rho(M_1) = \rhomin.
    \end{equation*}
    Let $x^{(\alpha)}$ be the Perron eigenvector of $M^{(\alpha)}$, where $\|x^{(\alpha)}\|=1$.
    Then by the convexity of the $l_2$ norm:
    \begin{equation*}
        \rhomin = \|M^{(\alpha)} x^{(\alpha)}\| \le (1-\alpha) \|M_0 x^{(\alpha)}\| + \alpha \|M_1 x^{(\alpha)}\|.
    \end{equation*}

    Since for a symmetric matrix the operator norm is equal to the spectral radius, it follows that $\|M_i x^{(\alpha)}\| \le \rho(M_i)=\rhomin$, for $i=0,1$, implying that
    \[
        \rhomin = \|M^{(\alpha)} x^{(\alpha)}\| = \|M_0 x^{(\alpha)}\| = \|M_1 x^{(\alpha)}\|.
    \]
    Therefore, by the strict convexity of Euclidean norm the three vectors are equal
    \[
        M^{(\alpha)} x^{(\alpha)} = M_0 x^{(\alpha)} = M_1 x^{(\alpha)}.
    \]
    It follows that $x^{(\alpha)}$ is the Perron eigenvector of $M_0$ and $M_1$ as well as $M^{(\alpha)}$.
\end{proof}

\begin{corollary}\label{corollary:x_fixed_on_same_sum_rows}
    Let $\M$ be a set of symmetric non-negative matrices closed under degree-preserving vertex permutations (such as $\M_{n,e}$ or $\M_{d_1,n_1,d_2,n_2}$). 
    Let $M \in \M$ be an irreducible matrix with $\rho(M) = \rho(\M)$ and let $x$ be the Perron eigenvector of $M$. 
    Then any two vertices $i,j$ with the same degree satisfy $x_i = x_j$.
\end{corollary}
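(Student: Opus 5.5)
The plan is to apply Lemma~\ref{lemma:two_matrices_same_Perron} to $M$ and a conjugate of $M$. Fix two vertices $i,j$ of equal degree and let $P$ be the permutation matrix of the transposition $(i\,j)$. Set $M_0 = M$ and $M_1 = P M P^T$. Since $P$ swaps two vertices of the same degree, it preserves every row sum, so $M_1 \in \M$ by the assumed closure. Also $M_1$ is symmetric and non-negative, it is irreducible because it is just $M$ with vertices relabelled, and $\rho(M_1)=\rho(M)=\rho(\M)$ because it is similar to $M$.

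Next I would check that the midpoint $M^{(1/2)} = \tfrac12(M_0+M_1)$ lies in $\M$. For a general closed family this would be a real hypothesis, but for the families the statement has in mind it follows directly. Row $k$ of $M^{(1/2)}$ sums to the average of the $k$-th row sums of $M_0$ and $M_1$. These two row sums are equal, since $P$ preserves degrees, so $M^{(1/2)}$ has exactly the same degree sequence as $M$. Hence it lies in $\M_{n,e}$ (integral row sums, total $2e$) and in $\M_{d_1,n_1,d_2,n_2}$ respectively. Symmetry and non-negativity are clearly preserved. I will state this convexity-under-averaging property explicitly as the property of $\M$ being used, since "closed under degree-preserving vertex permutations" alone does not give it.

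Lemma~\ref{lemma:two_matrices_same_Perron} then says that $M_0$ and $M_1$ share a Perron eigenvector. The Perron eigenvector of $M_1 = PMP^T$ is $Px$: indeed $PMP^T(Px) = PMx = \rho Px$, and $Px$ is positive. By the uniqueness in Proposition~\ref{proposition:perron}, $Px$ is a positive multiple of $x$, say $Px = cx$. Since $P$ is orthogonal, $\|Px\| = \|x\|$, so $c=1$. Comparing the $i$-th coordinates gives $x_j = x_i$.

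The only delicate step is membership of the average in $\M$. I expect it to be the main point to make explicit, together with irreducibility of $M_1$. If $n_1$ or $n_2$ is small, or $i=j$, the claim is trivial, and $n\ge 2$ is needed for Proposition~\ref{proposition:perron}; the case $n=1$ is vacuous.
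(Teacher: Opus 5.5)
Your proposal is correct and is exactly the paper's argument. The paper applies Lemma~\ref{lemma:two_matrices_same_Perron} to $M_0=M$ and $M_1=P^{-1}MP$ with $\alpha=1/2$; for a transposition $P^{-1}MP$ is the same matrix as your $PMP^T$. You also spell out two points the paper leaves tacit: the midpoint has the same degree sequence as $M$ and so lies in $\M_{n,e}$ or $\M_{d_1,n_1,d_2,n_2}$, which mere closure under permutations does not supply; and $Px=x$ follows from Perron uniqueness together with $\|Px\|=\|x\|$.
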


\begin{proof}
    Apply Lemma~\ref{lemma:two_matrices_same_Perron} with 
    $M_0=M$, $M_1=P^{-1}MP$ and $\alpha=1/2$, 
    where $P$ is the permutation matrix swapping rows $i$ and $j$. 
\end{proof}

\begin{proposition}[Theorem 6.3.12 from~\cite{horn2012matrix}]\label{proposition:lambda_perturbation}
    Given the matrices $A, E \in \MnC$ with a simple eigenvalue $\lambda(A)$, then the function $\lambda(A + t E )$ is well defined and differentiable with respect to $t$ at $t=0$ with:
    \begin{equation*}
        \frac{d \lambda(A + t E)}{dt}\bigg\rvert_{t=0} = \frac{x^*Ey}{x^*y},
    \end{equation*}
    where $x$ and $y$ are the left and right eigenvectors corresponding to $\lambda$, $x^*A = \lambda(A)x^*$ and $Ay=\lambda(A)y$.  
\end{proposition}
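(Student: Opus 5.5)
The plan is the standard perturbation argument: first show that $\lambda(t)$ exists and is differentiable, then produce a differentiable right eigenvector $y(t)$, and finally differentiate the eigen-equation and pair it with the left eigenvector $x$.

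\emph{Existence and differentiability of $\lambda(t)$.} Consider the polynomial
\[
p(t,\mu)=\det(\mu I - A - tE),
\]
which is jointly polynomial in $(t,\mu)$. Since $\lambda=\lambda(A)$ is a simple root of $p(0,\cdot)$, we have $p(0,\lambda)=0$ and $\partial_\mu p(0,\lambda)\neq 0$. The (holomorphic) implicit function theorem then gives a unique differentiable branch $\lambda(t)$ near $t=0$ with $\lambda(0)=\lambda$ and $p(t,\lambda(t))=0$. By continuity of roots, $\lambda(t)$ remains a simple eigenvalue of $A+tE$ for small $|t|$, so ``the'' eigenvalue $\lambda(A+tE)$ is well defined.

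\emph{A differentiable right eigenvector.} Set $B(t)=\lambda(t)I-A-tE$. Because $\lambda$ is simple, $B(0)$ has rank exactly $n-1$, so $\operatorname{adj}B(0)\neq 0$. Since $B(0)\operatorname{adj}B(0)=\det B(0)\,I=0$, every column of $\operatorname{adj}B(0)$ lies in the one-dimensional kernel. Choose a column of $\operatorname{adj}B(0)$ that is nonzero, and let $y(t)$ be the same column of $\operatorname{adj}B(t)$. This $y(t)$ is a polynomial in $t$ and $\lambda(t)$, hence differentiable. It satisfies $B(t)y(t)=\det B(t)\,e=0$, and it is nonzero near $0$. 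After rescaling we may assume $y(0)=y$.

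\emph{The derivative formula.} Differentiate $(A+tE)y(t)=\lambda(t)y(t)$ at $t=0$:
\[
Ey + Ay'(0)=\lambda'(0)\,y+\lambda\, y'(0).
\]
Left-multiply by $x^*$ and use $x^*A=\lambda x^*$. The terms $x^*Ay'(0)$ and $\lambda x^*y'(0)$ cancel, leaving
\[
x^*Ey=\lambda'(0)\,x^*y.
\]
It remains to show $x^*y\neq 0$, which is the one step needing a real argument and the one I expect to be the main obstacle. I would use the Jordan (or Schur-type) decomposition $A=S(\lambda\oplus J)S^{-1}$, where $\lambda\notin\sigma(J)$ by simplicity. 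In this decomposition $y$ is a multiple of $Se_1$ and $x^*$ is a multiple of $e_1^*S^{-1}$, so $x^*y$ is a nonzero multiple of $e_1^*S^{-1}Se_1=1$. Equivalently, if $x^*y=0$ then $y\in\operatorname{range}(\lambda I-A)$, which would produce a generalized eigenvector and contradict algebraic simplicity. Dividing by $x^*y$ then gives the stated formula.
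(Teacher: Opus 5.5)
Your proof is correct and complete. The paper gives no proof of this proposition; it simply quotes Theorem 6.3.12 of Horn and Johnson, so there is no in-paper argument to compare against. Your four steps supply a valid self-contained proof:
\begin{itemize}
\item the implicit-function-theorem branch $\lambda(t)$, which stays simple for small $t$ because $\partial_\mu p(t,\lambda(t))\neq 0$ by continuity;
\item the adjugate column, which gives a differentiable eigenvector $y(t)$ with $y(0)=y$;
\item differentiating $(A+tE)y(t)=\lambda(t)y(t)$ and pairing with $x^*$;
\item the Jordan-form or generalized-eigenvector argument showing $x^*y\neq 0$.
\end{itemize}

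The step you flagged as the main obstacle actually falls out of objects you already have. Since $\lambda I-A$ has rank $n-1$, the identities $(\lambda I-A)\operatorname{adj}(\lambda I-A)=0=\operatorname{adj}(\lambda I-A)(\lambda I-A)$ put its columns in $\operatorname{span}(y)$ and its rows in $\operatorname{span}(x^*)$. Hence $\operatorname{adj}(\lambda I-A)=c\,yx^*$ with $c\neq 0$. Jacobi's formula then gives $\partial_\mu p(0,\lambda)=\operatorname{tr}\operatorname{adj}(\lambda I-A)=c\,x^*y$, which is nonzero exactly because $\lambda$ is a simple root. The same formula gives $\partial_t p(0,\lambda)=-c\,x^*Ey$, so $\lambda'(0)=-\partial_t p/\partial_\mu p=x^*Ey/x^*y$ follows directly from the implicit function theorem. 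This route needs neither the differentiable eigenvector $y(t)$ nor a separate argument for $x^*y\neq 0$.
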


\begin{corollary}\label{corollary:rho_perturbation}
        If $A \in \MnR$ is a symmetric non-negative irreducible matrix, $E \in \MnR$ is symmetric and there exists $\epsilon>0$ such that $A + t E$ is non-negative and irreducible for all $t \in [0,\epsilon)$,
        then the function $\rho(A + t E )$ is differentiable with respect to $t$ at $t=0^+$ with:
        \begin{equation*}
            \frac{d \rho(A + t E)}{dt}\bigg\rvert_{t=0^+} = \frac{x^TEx}{\|x\|^2},
        \end{equation*}
        where $x$ is the Perron eigenvector of $A$, satisfying $A x = \rho(A) x$.      
\end{corollary}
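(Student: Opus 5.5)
The plan is to derive the corollary from Proposition~\ref{proposition:lambda_perturbation} by specializing to the symmetric, non-negative, irreducible setting, where the Perron root is simple and the left and right eigenvectors coincide.

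\textbf{Step 1: simplicity and eigenvectors at $t=0$.} Since $A$ is non-negative and irreducible, Proposition~\ref{proposition:perron} says that $\rho(A)$ is an algebraically simple eigenvalue. It has a strictly positive right eigenvector $x$ and a positive left eigenvector $y$. Because $A$ is symmetric, $x^T A = (Ax)^T = \rho(A)x^T$, so $x$ is also a left eigenvector, and by uniqueness up to scaling we may take $y=x$. The vector $x$ is real, so $x^*=x^T$, and $x^T x = \|x\|^2>0$.

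\textbf{Step 2: identify $\rho(A+tE)$ with the perturbed simple eigenvalue.} Proposition~\ref{proposition:lambda_perturbation} gives a well-defined function $\lambda(t)$: the eigenvalue of $A+tE$ that continues $\rho(A)$ and stays simple near $t=0$. Its derivative at $0$ is $x^TEx/x^Tx$. We must check that $\lambda(t)=\rho(A+tE)$ for small $t\ge 0$. Eigenvalues depend continuously on the matrix. The matrix $A+tE$ is symmetric, so its eigenvalues are real, and its largest eigenvalue equals $\rho(A+tE)$ because it is non-negative (Perron--Frobenius). The largest eigenvalue of a symmetric matrix is continuous in $t$. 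At $t=0$ it equals $\rho(A)$, which is separated by a positive gap from the other eigenvalues because it is simple. For small $t$ the eigenvalues therefore stay within a small distance of their original values, so the eigenvalue near $\rho(A)$ is still the largest one. Hence $\lambda(t)=\rho(A+tE)$ for $t\in[0,\epsilon')$.

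\textbf{Step 3: conclude.} The one-sided derivative of $\rho(A+tE)$ at $0^+$ equals $\lambda'(0)$, which is $x^TEx/\|x\|^2$.

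The only delicate point is Step~2, namely matching the analytic eigenvalue branch with the spectral radius. The gap argument above handles it. The hypothesis that $A+tE$ is non-negative and irreducible on $[0,\epsilon)$ serves only to guarantee that $\rho(A+tE)$ is itself a simple Perron root, which makes the identification immediate.
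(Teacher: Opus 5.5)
Your proposal is correct and follows the paper's route. Perron--Frobenius gives simplicity of $\rho(A)$, and Proposition~\ref{proposition:lambda_perturbation} gives the derivative of the simple eigenvalue branch, with left and right eigenvectors coinciding by symmetry. That branch is then identified with $\rho(A+tE)$ for small $t \ge 0$. Your gap argument in Step~2 supplies the justification for this identification that the paper only asserts, and it uses just non-negativity of $A+tE$, so your closing remark crediting irreducibility with making the identification immediate undersells your own argument.
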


\begin{proof}
    Let $A, E$ and $\epsilon$ satisfy the stated assumptions.
    By Proposition~\ref{proposition:perron}, $\rho(A) = \lambda(A)$ is a simple eigenvalue of $A$.
    Proposition~\ref{proposition:lambda_perturbation} then implies that the function $\lambda(A+tE)$ is well defined on the interval $(-\delta,\delta)$ for some $\delta > 0$.
    Since $A+tE$ is nonnegative and irreducible for every $t \in \left[0,\min(\epsilon,\delta)\right)$, we have $\rho(A+tE)=\lambda(A+tE)$.
    Hence, Proposition~\ref{proposition:lambda_perturbation} yields the desired result.
\end{proof}

\begin{corollary}\label{corollary:derivative_Eij}
    Let  $\M$ be a  set of symmetric non-negative matrices closed under degree-preserving perturbations (such as $\M_{n,e}$ or $\M_{d_1,n_1,d_2,n_2}$). Let $M \in \M$ be an irreducible matrix with Perron eigenvector $x$, such that $M_{ii}, M_{jj} > 0$ for some $i \neq j$.
    Let $E=E^{(ij)} \in \MnR$ be a matrix that is all zero except that $E_{ii}=E_{jj}=-1$ and $E_{ij}=E_{ji}=1$.
    Then there exists $\epsilon > 0$ satisfying $M + t E \in \M$ for all $t \in [0,\epsilon)$ and:
    \begin{equation*}
        \frac{d}{dt} \rho(M + t E)\bigg\rvert_{t=0^+}  = -\frac{(x_i-x_j)^2}{\|x\|^2}.
    \end{equation*}
\end{corollary}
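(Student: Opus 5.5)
The plan is to verify the two claims of Corollary~\ref{corollary:derivative_Eij} separately: first that $M+tE$ stays in $\M$ for small $t\ge 0$, and then to compute the derivative by Corollary~\ref{corollary:rho_perturbation}.

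\emph{Membership.} Row sums of $E$ vanish: row $i$ has $E_{ii}=-1$ and $E_{ij}=1$, row $j$ has $E_{ji}=1$ and $E_{jj}=-1$, and all other rows are zero. Hence $M+tE$ has the same degree sequence as $M$ for every $t$, and it is symmetric because $E$ is. Non-negativity can fail only at the entries $(i,i)$ and $(j,j)$, which equal $M_{ii}-t$ and $M_{jj}-t$; the entries $(i,j),(j,i)$ only increase. So choosing $\epsilon=\min(M_{ii},M_{jj})>0$ gives $M+tE\ge 0$ for $t\in[0,\epsilon)$. Since $\M$ is defined only by symmetry, non-negativity and row-sum constraints (this is what closure under degree-preserving perturbations means for $\M_{n,e}$ and $\M_{d_1,n_1,d_2,n_2}$), we get $M+tE\in\M$.

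\emph{Irreducibility.} For $t\in[0,\epsilon)$, the off-diagonal support of $M+tE$ contains that of $M$. Off-diagonal entries never decrease, and only diagonal entries shrink, while they stay positive. Irreducibility depends only on the off-diagonal pattern, so $M+tE$ is irreducible. This step needs the most care, since one must confirm that no off-diagonal edge is lost. Here that is immediate, because $E$ is non-negative off the diagonal.

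\emph{Derivative.} The hypotheses of Corollary~\ref{corollary:rho_perturbation} now hold with $A=M$. Therefore
\[
\frac{d}{dt}\rho(M+tE)\Big|_{t=0^+}=\frac{x^TEx}{\|x\|^2}.
\]
Expanding the quadratic form gives
\[
x^TEx=-x_i^2-x_j^2+2x_ix_j=-(x_i-x_j)^2,
\]
which yields the stated formula.
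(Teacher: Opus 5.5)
Your proposal is correct and follows the paper's proof: the same choice $\epsilon=\min(M_{ii},M_{jj})$, followed by an appeal to Corollary~\ref{corollary:rho_perturbation}. You also spell out two steps the paper leaves implicit, namely that irreducibility is preserved because $E$ is non-negative off the diagonal, and the computation $x^TEx=-(x_i-x_j)^2$.
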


\begin{proof}
    Given matrices $M, E=E^{(ij)}$ satisfying the requirements, 
    the condition $M + t E \in \M$ holds for $t \in [0,\epsilon]$, 
    where $\epsilon = \min(M_{ii}, M_{jj})$.
    The result follows by Corollary~\ref{corollary:rho_perturbation}.
\end{proof}

\begin{lemma}\label{lemma:generalized_perturbation}
    Let  $\M$ be a  set of symmetric non-negative matrices closed under degree-preserving perturbations (such as $\M_{n,e}$ or $\M_{d_1,n_1,d_2,n_2}$). Let $M \in \M$ be an irreducible matrix with $\rho(M) = \rho(\M)$, and let $x$ be its Perron eigenvector. Suppose there exist disjoint vertex sets $U$ and $W$ such that $x$ is constant on $U$ with value $\chi_U$, and constant on $W$ with value $\chi_W$. If $\chi_U \neq \chi_W$, then at least one of the induced subgraphs $M_{|U}$ and $M_{|W}$ must be empty.
\end{lemma}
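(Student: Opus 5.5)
We argue by contradiction: assume both $M_{|U}$ and $M_{|W}$ are non-empty and construct a degree-preserving perturbation that strictly decreases $\rho$, contradicting $\rho(M)=\rho(\M)$. The perturbation generalizes $E^{(ij)}$ of Corollary~\ref{corollary:derivative_Eij}. Instead of moving weight from loops to an edge, we move weight from an edge inside $U$ and an edge inside $W$ to two edges between $U$ and $W$.

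First, pick a positive entry $M_{uu'}>0$ with $u,u'\in U$ (possibly $u=u'$, a loop) and a positive entry $M_{ww'}>0$ with $w,w'\in W$. Let $E$ be the symmetric matrix that decreases the symmetric entries at $(u,u')$ and $(w,w')$ and increases the symmetric entries at $(u,w)$ and $(u',w')$. Choose the coefficients so that every row sum of $E$ is zero. In the off-diagonal case this means $-1$ at the symmetric positions $(u,u'),(u',u),(w,w'),(w',w)$ and $+1$ at $(u,w),(w,u),(u',w'),(w',u')$. For loops use $-1$ on the diagonal with a single $+1$ edge, as in $E^{(ij)}$. If $u=u'$ but $w\neq w'$, pair the loop at $u$ with edges to both $w$ and $w'$, using the coefficients needed to keep row sums zero. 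I would check these mixed cases separately; it is bookkeeping only. Since $U\cap W=\emptyset$, the increased entries are genuinely between the two sets.

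Next, verify the hypotheses of Corollary~\ref{corollary:rho_perturbation}. For small $t\ge 0$ the matrix $M+tE$ is non-negative, because only positive entries decrease. It stays in $\M$ because row sums are unchanged and the family is closed under degree-preserving perturbations. It stays irreducible because the new edges only add connectivity. Whether a removed edge could disconnect the graph does not matter for small $t>0$: the entry remains positive, since we subtract $t<M_{uu'}$. The corollary then gives
\[
\frac{d}{dt}\rho(M+tE)\Big|_{t=0^+}=\frac{x^TEx}{\|x\|^2}.
\]
In the generic case,
\[
x^TEx = 2\bigl(x_ux_w + x_{u'}x_{w'} - x_ux_{u'} - x_wx_{w'}\bigr).
\]
Substituting $x=\chi_U$ on $U$ and $x=\chi_W$ on $W$, this equals $2(2\chi_U\chi_W-\chi_U^2-\chi_W^2)=-2(\chi_U-\chi_W)^2<0$. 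The loop cases give the same sign with different constants.

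Finally, a strictly negative one-sided derivative means $\rho(M+tE)<\rho(M)=\rho(\M)$ for small $t>0$, while $M+tE\in\M$. This is a contradiction, so one of the induced subgraphs is empty. The main obstacle is making the perturbation valid in all cases: handling loops and coincident endpoints so that the row sums are exactly preserved, and confirming that the membership requirements for $\M_{n,e}$ hold (integral row sums and total $2e$, both unchanged). The sign computation itself is routine once $x$ is constant on $U$ and on $W$.
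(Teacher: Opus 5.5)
Your argument is correct and is essentially the paper's: assume both induced subgraphs are non-empty, build a degree-preserving $E$, and get a strictly negative one-sided derivative from Corollary~\ref{corollary:rho_perturbation}, contradicting minimality. The only difference is the choice of $E$. You use a genuine 2-switch $\{uu',ww'\}\to\{uw,u'w'\}$, with derivative $-2(\chi_U-\chi_W)^2/\|x\|^2$; your mixed loop case also works, with coefficient $-2$ on the loop. The paper instead uses $E^{(i_1j_1)}-E^{(i_1i_2)}-E^{(j_1j_2)}$, which shifts weight onto the cross edge $(i_1,j_1)$ and onto new self-loops at $i_2,j_2$, giving derivative $-(\chi_U-\chi_W)^2/\|x\|^2$, and it handles the loop-only cases by dropping terms from $E$.
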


\begin{proof}
    Suppose for the sake of contradiction that both induced graphs $M_{|U}$ and $M_{|W}$ are non-empty. If there exist distinct indices $i_1, i_2 \in U$ such that $M_{i_1 i_2} > 0$, and distinct indices $j_1, j_2 \in W$ such that $M_{j_1 j_2} > 0$, we define the perturbation matrix 
    $E = E^{(i_1 j_1)} - E^{(i_1 i_2)} - E^{(j_1 j_2)}$.
    As $M + tE \in \M$ for a sufficiently small $t > 0$, we may apply Corollary~\ref{corollary:rho_perturbation} to obtain that
    \begin{equation*}
        \frac{d}{dt}\rho(M + tE)\bigg\rvert_{t=0^+} 
        = \frac{1}{\|x\|^2} \left[ (x_{i_1} - x_{i_2})^2 + (x_{j_1} - x_{j_2})^2 - (x_{i_1} - x_{j_1})^2 \right] 
        = -\frac{(\chi_U - \chi_W)^2}{\|x\|^2} 
        < 0,
    \end{equation*}
    where the last equality holds as $x_{i_1} = x_{i_2} = \chi_U$ and $x_{j_1} = x_{j_2} = \chi_W$.
    This strictly negative derivative contradicts the assumption that $M$ minimizes the spectral radius on $\M$. 
    
    If $M_{|U}$ has only self-loop edges, we choose $M_{i_1 i_1} > 0$ for some $i_1 \in U$ and delete the term $E^{(i_1 i_2)}$ from the definition of $E$. Similarly, if $M_{|W}$ has only self-loop edges, we choose $M_{j_1 j_1} > 0$ for some $j_1 \in W$ and delete the term $E^{(j_1 j_2)}$ from the definition of $E$. In all cases, a strictly negative derivative is obtained, contradicting the minimality of $\rho(M)$. Therefore, at least one of the induced graphs $M_{|U}$ or $M_{|W}$ must be empty.
\end{proof}

\section{The Bi-Regular Lower Bound}

In this section, we prove Theorem~\ref{theorem:Mn1d1n2d2}, stating that for the integers $n_1 \geq 0$, $n_2>0$ and non-negative real numbers $d_1,d_2$ such that $n_1 d_1 \le n_2 d_2$, equation \eqref{eq:rhoopt_intro} holds. 
Namely, $\rho(\M_{d_1,n_1,d_2,n_2}) = \rho_0(d_1,n_1,d_2,n_2)$, where $\rho_0$ is given by
\begin{equation}\label{eq:rho0_n1d1n2d2}
    \rho_0(d_1,n_1,d_2,n_2) = 
        \frac{1}{2}\left[d_2 - d_1 \frac{n_1}{n_2} + \sqrt{4d_1^2 \frac{n_1}{n_2} + (d_2 - d_1 \frac{n_1}{n_2})^2}\,\right].
\end{equation}

\noindent
Some notes about this result:
\begin{enumerate}
    \item 
        If $d_1 n_1 > d_2 n_2$, the roles of $d_1,n_1$ and $d_2,n_2$ should be exchanged, so Theorem~\ref{theorem:Mn1d1n2d2} and the rest of the results of this section can be applied.
    \item
        If $n_1=0$ this is the $d_2$-regular case, and as expected $\rho_0 = d_2$.
    \item 
        If $d_1 n_1 = d_2 n_2$, then $\rho_0=\sqrt{d_1d_2}$, as expected for a bi-partite bi-regular graph.
\end{enumerate}
 
As the function $\rho_0$ is invariant under scaling of $n_1$ and $n_2$, it is natural to let $\nu=\frac{n_2}{n_1+n_2}$.
Also, as the condition $n_1 d_1 \le n_2 d_2$ translates to $\frac{d_1}{d_1+d_2} \le \nu \le 1$, 
we derive the case $0 \le \nu < \frac{d_1}{d_1+d_2}$ by exchanging $d_1$ and $d_2$ and $\nu$ with $1-\nu$. 
This results with the following expression for $\rho(d_1,d_2,\nu)$:
\begin{equation}\label{eq:rho0_d1d2nu}
    \rho_0(d_1,d_2,\nu) = \begin{cases}\scriptstyle
        \frac{1}{2}\left[d_2 - d_1 \frac{1-\nu}{\nu} + \sqrt{4d_1^2 \frac{1-\nu}{\nu} + (d_2 - d_1 \frac{1-\nu}{\nu})^2}\,\right] & \textrm{ if }\frac{d_1}{d_1+d_2} \le \nu \le 1 \\
        \addlinespace[0.3cm]
        \scriptstyle
        \frac{1}{2}\left[d_1 - d_2 \frac{\nu}{1-\nu} + \sqrt{4d_2^2 \frac{\nu}{1-\nu} + (d_1 - d_2 \frac{\nu}{1-\nu})^2}\,\right] & \textrm{ if }0 \le \nu < \frac{d_1}{d_1+d_2}
    \end{cases}
\end{equation}

We start with two lemmas required for the proof of Theorem~\ref{theorem:Mn1d1n2d2}.

\begin{lemma}\label{lemma:bounded_by_rho0}
        Given the integers $n_1 \geq 0$, $n_2>0$ and distinct non-negative real numbers $d_1,d_2$ such that $n_1 d_1 \le n_2 d_2$, we have
        \begin{equation*}
            \rho(\M_{d_1,n_1,d_2,n_2}) \le \rho_0(d_1,n_1,d_2,n_2).
        \end{equation*}
\end{lemma}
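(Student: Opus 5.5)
The plan is to exhibit an explicit weighted graph in $\M_{d_1,n_1,d_2,n_2}$ whose spectral radius equals $\rho_0(d_1,n_1,d_2,n_2)$. This suffices because $\rho(\M_{d_1,n_1,d_2,n_2})$ is a minimum over the set. The construction follows the shape suggested by Theorem~\ref{theorem:Mn1d1n2d2}: the induced graph on $V_1$ is empty, the bipartite part between $V_1$ and $V_2$ is ``uniform'', and the induced graph on $V_2$ is regular.

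\textbf{Construction.} Set $c = d_2 - d_1\frac{n_1}{n_2}$; the hypothesis $n_1 d_1 \le n_2 d_2$ gives $c \ge 0$. Define the symmetric non-negative matrix $M$ blockwise with respect to $V_1 \cup V_2$ as follows:
\begin{itemize}
    \item the $V_1\times V_1$ block is zero;
    \item every entry of the $V_1 \times V_2$ block and of the $V_2\times V_1$ block equals $d_1/n_2$;
    \item every entry of the $V_2\times V_2$ block, diagonal included, equals $c/n_2$.
\end{itemize}
Then each row in $V_1$ sums to $n_2\cdot d_1/n_2 = d_1$. Each row in $V_2$ sums to $n_1 d_1/n_2 + c = d_2$. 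Hence $M\in\M_{d_1,n_1,d_2,n_2}$. If $n_1=0$, the $V_1$ blocks are simply absent and $M$ is $d_2$-regular.

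\textbf{Computing $\rho(M)$.} The partition $V_1\cup V_2$ is equitable for $M$, with quotient matrix
\[
B=\begin{pmatrix} 0 & d_1 \\ d_1 n_1/n_2 & c\end{pmatrix}.
\]
Its characteristic polynomial is $\lambda^2 - c\lambda - d_1^2 n_1/n_2$. Its largest root is
\[
\lambda = \tfrac12\left[c+\sqrt{c^2+4d_1^2 n_1/n_2}\right],
\]
which is exactly $\rho_0(d_1,n_1,d_2,n_2)$.

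Next, lift an eigenvector $(a,b)^T$ of $B$ for this root to $x\in\R^n$ by setting $x_i=a$ on $V_1$ and $x_i=b$ on $V_2$. Then $Mx=\lambda x$.

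\textbf{Positivity of $x$ and the main obstacle.} The remaining step is to check that $x$ can be taken positive. Once that holds, the standard fact applies that a non-negative matrix with a strictly positive eigenvector has the corresponding eigenvalue equal to its spectral radius. This follows from the Collatz–Wielandt bounds, or directly: for symmetric $M$ and positive $x$ with $Mx=\lambda x$, one has $\rho(M)\le \max_i (Mx)_i/x_i=\lambda$. This version does not require irreducibility, so it also covers the degenerate cases below. I expect the positivity check, and the bookkeeping of these degenerate cases, to be the only real obstacle.

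\begin{itemize}
    \item \emph{Generic case $d_1>0$, $n_1>0$.} Here $\lambda>0$, and we may take $b=1$, $a=d_1/\lambda>0$.
    \item \emph{Case $d_1=0$.} Then $\rho_0 = \tfrac12(d_2+|d_2|)=d_2$. The $V_1$ vertices are isolated, and $M$ restricted to $V_2$ is $d_2$-regular, so $\rho(M)=d_2$ directly.
    \item \emph{Case $n_1=0$.} $M$ is $d_2$-regular, so $\rho(M)=d_2=\rho_0$ directly.
\end{itemize}

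In all cases $\rho(M)=\rho_0$, which yields $\rho(\M_{d_1,n_1,d_2,n_2})\le\rho(M)=\rho_0(d_1,n_1,d_2,n_2)$.
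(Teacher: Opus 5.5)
Your proof is correct and follows the paper's argument. You use the same explicit matrix (empty on $V_1$, constant $d_1/n_2$ between the two sides, constant $(d_2 - d_1 n_1/n_2)/n_2$ on $V_2$) and the same reduction to the $2\times 2$ quotient matrix, and the only difference is direction: you lift a positive eigenvector of the quotient and apply the Collatz--Wielandt bound, whereas the paper starts from the Perron vector of $M$ and argues it is constant on $V_1$ and $V_2$ via Corollary~\ref{corollary:x_fixed_on_same_sum_rows}, which is formally stated only for minimizers, so your route avoids that slight mismatch.
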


\begin{proof}
    To prove the lemma, we exhibit a matrix $M$ in $\M_{d_1,n_1,d_2,n_2}$ and show that $\rho(M) = \rho_0(d_1,n_1,d_2,n_2)$.
    Let $c_{k \times l}$ denote the $k \times l$ matrix with all entries equal to $c$. 
    Given $n_1,n_2,d_1,d_2$ satisfying the requirements, let $V_1=[n_1]$ and $V_2=[n_1+n_2]\setminus[n_1]$.
    Define the matrix $M \in \M_{d_1,n_1,d_2,n_2}$ by
    \begin{equation*}
        M = \begin{pmatrix}
          0_{n_1 \times n_1}                  & (\frac{d_1}{n_2})_{n_1 \times n_2} \\
          (\frac{d_1}{n_2})_{n_2 \times n_1}  & \frac{1}{n_2}\left(d_2 - n_1 \frac{d_1}{n_2}\right)_{n_2 \times n_2}
        \end{pmatrix}        
    \end{equation*}
    If $d_1=0$, then $M = 0_{n_1 \times n_1} \oplus (\frac{d_2}{n_2})_{n_2 \times n_2}$, so $\rho(M) = d_2 = \rho_0(d_1,n_1,d_2,n_2)$, as claimed.
    Otherwise, $M$ is irreducible. Let $x$ be its Perron eigenvector, $Mx = \rho(M) x$.
    Then by Corollary~\ref{corollary:x_fixed_on_same_sum_rows}, $x$ is constant on $V_1$ and on $V_2$. 
    Let $\chi_l$ be the value of $x$ on $V_l$ for $l=1,2$.
    Then, letting $i$ be an arbitrary index in $V_1$ and $j$ an index in $V_2$, we have
    \begin{align*}
        \rho(M) \chi_1 & = \rho(M) x_i = (M x)_i = \chi_1 n_1 \cdot 0 + \chi_2 n_2 \cdot \frac{d_1}{n_2} = \chi_2 d_1 \\
        \rho(M) \chi_2 & = \rho(M) x_j =  (M x)_j = \chi_1 n_1 \cdot \frac{d_1}{n_2} + \chi_2 n_2 \cdot \frac{1}{n_2} \left(d_2 - n_1 \frac{d_1}{n_2}\right) \\
                       & = \chi_1 d_1 \frac{n_1}{n_2} + \chi_2 \left(d_2 - d_1 \frac{n_1}{n_2}\right).
    \end{align*}
    Therefore
    \begin{equation*}
        M' \begin{pmatrix}\chi_1\\\chi_2\end{pmatrix} = \rho(M)\begin{pmatrix}\chi_1\\\chi_2\end{pmatrix}, \quad\text{ where } M' = \begin{pmatrix} 0 & d_1 \\ d_1 \frac{n_1}{n_2} & d_2 - d_1 \frac{n_1}{n_2}\end{pmatrix},
    \end{equation*}
    implying that $\rho(M) = \rho(M')$. 
    Solving the quadratic equation for the eigenvalues of $M'$ yields that its spectral radius is $\rho_0(d_1,n_1,d_2,n_2)$.
\end{proof}

\begin{lemma}\label{lemma:rho0_monotone}
    The function $\rho_0(d_1,d_2,\nu)$, for $0 \le \nu \le 1$, is 
    strictly increasing in $\nu$ if $0 < d_1 < d_2$, and strictly decreasing in $\nu$ if $0 < d_2 < d_1$.
\end{lemma}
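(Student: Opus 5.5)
The plan is to treat $\rho_0$ as the largest root of a quadratic, differentiate implicitly, and read off the sign of the derivative by locating that root relative to $d_1$ and $d_2$.

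\textbf{Setting up the quadratic.} On the branch $\frac{d_1}{d_1+d_2}\le\nu\le1$, put $r=\frac{1-\nu}{\nu}$. This is a strictly decreasing function of $\nu$ with $r\in[0,d_2/d_1]$. By the proof of Lemma~\ref{lemma:bounded_by_rho0}, $\lambda=\rho_0$ is the largest root of the characteristic polynomial of $M'$:
\begin{equation*}
p_r(\lambda)=\lambda^2-(d_2-d_1r)\lambda-d_1^2r=\lambda^2-d_2\lambda+d_1r(\lambda-d_1).
\end{equation*}
The discriminant $4d_1^2r+(d_2-d_1r)^2$ is positive whenever $d_1>0$ and $d_2>0$. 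Hence the largest root is simple, and $\partial_\lambda p_r(\lambda)=2\lambda-d_2+d_1r$ equals the square root of the discriminant, which is positive. Implicit differentiation then gives
\begin{equation*}
\frac{d\lambda}{dr}=-\frac{d_1(\lambda-d_1)}{2\lambda-d_2+d_1r},
\end{equation*}
so the sign of $d\lambda/dr$ is the sign of $d_1-\lambda$.

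\textbf{Locating $\lambda$ relative to $d_1$ on the first branch.}
\begin{itemize}
\item If $0<d_1<d_2$: we have $p_r(d_1)=d_1(d_1-d_2)<0$. So $d_1$ lies strictly between the two roots, and $\lambda>d_1$. Then $d\lambda/dr<0$, and since $r$ decreases in $\nu$, $\lambda$ is strictly increasing in $\nu$.
\item If $0<d_2<d_1$: we have $p_r(d_1)>0$. The vertex of the parabola sits at $(d_2-d_1r)/2<d_1$, so $d_1$ lies to the right of the vertex, and $p_r(d_1)>0$ then forces $d_1>\lambda$. Hence $d\lambda/dr>0$ and $\lambda$ is strictly decreasing in $\nu$.
\end{itemize}

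\textbf{The second branch.} For $0\le\nu<\frac{d_1}{d_1+d_2}$ the computation is the same with the roles of $d_1$ and $d_2$ exchanged. Put $s=\frac{\nu}{1-\nu}$, which is now strictly increasing in $\nu$. Then $\lambda$ is the largest root of
\begin{equation*}
q_s(\lambda)=\lambda^2-d_1\lambda+d_2s(\lambda-d_2),
\end{equation*}
and $d\lambda/ds$ has the sign of $d_2-\lambda$.
\begin{itemize}
\item If $d_1<d_2$: we have $q_s(d_2)=d_2(d_2-d_1)>0$ and the vertex lies left of $d_2$. Hence $\lambda<d_2$ and $\lambda$ increases with $s$, hence with $\nu$.
\item If $d_2<d_1$: we have $q_s(d_2)<0$. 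Hence $\lambda>d_2$ and $\lambda$ decreases in $\nu$.
\end{itemize}

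\textbf{Gluing and the main obstacle.} At the junction $\nu=\frac{d_1}{d_1+d_2}$ (where $r=d_2/d_1$, $s=d_1/d_2$) both formulas give $\sqrt{d_1d_2}$, so $\rho_0$ is continuous there. A continuous function that is strictly monotone in the same direction on two adjacent closed intervals is strictly monotone on their union. At the endpoints $\nu=0,1$ the formulas are finite ($s=0$ or $r=0$), so continuity covers them as well.

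The only delicate point is the strict positioning of $\lambda$ relative to $d_1$ (respectively $d_2$) when $p_r(d_1)>0$ (respectively $q_s(d_2)>0$). A positive value at a point only shows that the point lies outside the interval between the roots, so the vertex comparison is needed to conclude that $d_1$ lies above both roots rather than below. This is where the hypotheses $d_1,d_2>0$ and $d_1\neq d_2$ are used, and they keep all inequalities strict.
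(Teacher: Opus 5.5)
Your proof is correct and is essentially the paper's argument in different packaging. The paper differentiates the closed form in $x=\frac{1-\nu}{\nu}$ (your $r$) and reduces the sign to that of $2d_1-(d_2-d_1x)-\sqrt{4d_1^2x+(d_2-d_1x)^2}$, which equals $2(d_1-\lambda)$; its case split on the sign of $2d_1-(d_2-d_1x)$ is your vertex comparison, and squaring then produces $4d_1^2-4d_1d_2=4p_r(d_1)$, your evaluation of the characteristic polynomial at $d_1$. The structural differences are that you redo the computation on the branch $\nu<\frac{d_1}{d_1+d_2}$ instead of invoking $\rho_0(d_1,d_2,\nu)=\rho_0(d_2,d_1,1-\nu)$, and that you explicitly check continuity at $\nu=\frac{d_1}{d_1+d_2}$ (both branches give $\sqrt{d_1d_2}$), which the paper leaves implicit but which is needed to glue the two strictly monotone pieces.
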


\begin{proof}
    Our first step is to restrict the domain of $\nu$ to $\left[\frac{d_1}{d_1+d_2}, 1\right]$.
    As equation \eqref{eq:rho0_d1d2nu} satisfies the identity $\rho_0(d_1,d_2,\nu) = \rho_0(d_2,d_1,1-\nu)$ for all $0 \le \nu \le 1$, 
    if $0 \le \nu < \frac{d_1}{d_1+d_2}$, we use the lemma with $d_2$, $d_1$, and $1-\nu$ instead. 
    Since this transformation changes the order of $d_1, d_2$ as well as switches between increasing and decreasing, this will prove the lemma also in this case.
    
    It is more convenient to prove the claim for $x=\frac{1-\nu}{\nu}$ where $0 \le x \le \frac{d_2}{d_1}$, noting that $x$ is monotone decreasing in $\nu$. Then we may write:
    \begin{equation*}
        \rho_0(d_1,d_2,x) = 
            \frac{1}{2}\left[d_2 - d_1 x + \sqrt{4d_1^2 x + (d_2 - d_1 x)^2}\,\right].
    \end{equation*}
    Therefore,    
    \begin{equation*}\begin{split}
        \frac{\partial\rho_0(d_1,d_2,x)}{\partial x} 
        &= \frac{1}{2}\left[-d_1 + \frac{4d_1^2 - 2d_1(d_2 - d_1 x)}{2\sqrt{4d_1^2 x + (d_2 - d_1 x)^2}}\,\right] \\
        &= \frac{d_1}{2}\left[\frac{2d_1 - (d_2 - d_1 x)}{\sqrt{4d_1^2 x + (d_2 - d_1 x)^2}} - 1\,\right].
    \end{split}\end{equation*}
    It follows that
    \begin{equation*}\begin{split}
        \sgn\!\left(\frac{\partial\rho_0(d_1,d_2,x)}{\partial x}\right)
        &= \sgn\!\left( 2d_1 - (d_2 - d_1 x) - \sqrt{4d_1^2 x + (d_2 - d_1 x)^2}\right)
    \end{split}\end{equation*}
    If $2d_1 - (d_2 - d_1 x) < 0$, then as $x \geq 0$, it implies that $d_1 < d_2$ in agreement with the negative sign.
    Otherwise, $2d_1 - (d_2 - d_1 x) \ge 0$ and
    \[\begin{split}
            \sgn\!\left(\frac{\partial\rho_0(d_1,d_2,x)}{\partial x}\right)
            &= \sgn\!\Bigl( (2d_1 - (d_2 - d_1 x))^2 - 4d_1^2 x - (d_2 - d_1 x)^2\Bigr) \\
            &= \sgn\left( 4d_1^2 -4d_1(d_2-d_1x)+(d_2-d_1x)^2 -4d_1^2x-(d_2-d_1x)^2\right)\\
            &= \sgn\!\Bigl( 4d_1^2 - 4d_1 d_2\Bigr)\\
            & = \sgn( d_1 - d_2),
    \end{split}\]
    as claimed.
\end{proof}

\BiregularMatrixTheorem*

\begin{proof}
    We prove both the spectral radius formula and the structural characterization by induction on the number of vertices $n = n_1 + n_2$. The base case $n=1$ is trivial. 
    
    If $d_1=0$, or $n_1=0$, then the induced graph on the degree $d_2$ vertices is trivially a $d_2$-regular graph, and the induced graph on the degree $d_1$ vertices is empty. Hence $\rho(M)=d_2=\rho_0(d_1,n_1,d_2,n_2)$, and the theorem holds.
    
    Assume $n > 1$ and $d_1, n_1 > 0$. We first consider the case where $M$ is irreducible. 
    By Corollary~\ref{corollary:x_fixed_on_same_sum_rows}, the Perron eigenvector $x$ is constant on $V_1$ and on $V_2$, with respective values $\chi_1 \neq \chi_2$. 
    Therefore, by Lemma~\ref{lemma:generalized_perturbation}, at least one of the induced graphs $M_{|V_1}$ or $M_{|V_2}$ is empty.
    If the induced graph on $V_2$ is empty, then all $d_2n_2$ edges incident with $V_2$ must connect to $V_1$, implying that $d_1n_1\ge d_2n_2$.
    In conjunction with the assumption that $d_1n_1\le d_2n_2$ it implies the equality $d_1n_1 = d_2n_2$, and by an edge counting argument it implies that the induced graph on $V_1$ is empty as well.
    Therefore, the conditions imply that the induced graph on $V_1$ must be empty.
    
    Next, we establish the regularity of $M_{|V_2}$. For a vertex $v \in V_2$, let $\delta_v = \sum_{v' \in V_2} M_{vv'}$ be its degree within $V_2$. The eigenvector equation $\rho x=Mx$ for row $v$ amounts to:
    \[ \rho \chi_2 = \chi_1 (d_2 - \delta_v) + \chi_2 \delta_v. \]
    As $\chi_1 \neq \chi_2$, solving this equation for $\delta_v$ yields a single solution independent of $v$. Thus, $M_{|V_2}$ is uniformly $\delta$-regular.
    
    Because $M_{|V_1}$ is empty, we write the $n\times n$ symmetric block matrix $M$ as
    \[
        M =\begin{pmatrix}
            0_{n_1 \times n_1} & M_{12} \\
            M_{21}             & M_{22}
        \end{pmatrix}.
    \]
    Let $m_{12}$, $m_{21}$ and $m_{22}$ be the average row sum of the matrices $M_{12}$, $M_{21}$ and $M_{22}$.
    Clearly $m_{12} = d_1$ and as $M_{21} = (M_{12})^T$, we also have $m_{21}=m_{12}n_1/n_2 = d_1 n_1 / n_2$ and $m_{22} = d_2 - d_1 n_1 / n_2$.
    Taking the average of all equations in the same block reduces the problem to a $2 \times 2$ matrix $M'$: 
    \begin{equation*}
        M' \begin{pmatrix}\chi_1\\\chi_2\end{pmatrix} = \rho(M)\begin{pmatrix}\chi_1\\\chi_2\end{pmatrix}, \quad\text{ where } 
        M' = \begin{pmatrix} 0 & d_1 \\ d_1 \frac{n_1}{n_2} & d_2 - d_1 \frac{n_1}{n_2}\end{pmatrix},
    \end{equation*}
    implying that $\rho(M) = \rho(M')$. Solving the quadratic equation for the eigenvalues of $M'$ yields $\rho(M) = \rho_0(d_1,n_1,d_2,n_2)$.
    
    Finally, consider the case where $M$ is reducible, decomposing into irreducible components $M = M_1 \oplus \cdots \oplus M_k$ for $k>1$.
    We have $\rho(M)=\max_i \rho(M_i)$. In each component $M_i$, denote the parameters by $n^{(i)}_1$, $n^{(i)}_2$, and $\nu_i=\frac{n^{(i)}_2}{n^{(i)}_1+n^{(i)}_2}$, so that $\nu = \sum_{i=1}^k \frac{n^{(i)}_1+n^{(i)}_2}{n} \nu_i$.
    If not all $\nu_i$ are equal, there exist indices $i'$ and $i''$ such that $\nu_{i'}<\nu<\nu_{i''}$.
    By the induction hypothesis, $\rho(M_i) \ge \rho_0(d_1,d_2,\nu_i)$. As $d_1,d_2\neq 0$ and $d_1\neq d_2$, Lemma~\ref{lemma:rho0_monotone} ensures the function $\rho_0(d_1,d_2,\cdot)$ is strictly monotone.
    It follows that $\rho(M)\ge \max\bigl(\rho_0(d_1,d_2,\nu_{i'}),\rho_0(d_1,d_2,\nu_{i''})\bigr) > \rho_0(d_1,d_2,\nu)$, contradicting Lemma~\ref{lemma:bounded_by_rho0}.
    Hence, all $\nu_i$ must equal $\nu$. By the induction hypothesis, each component $M_i$ achieves $\rho_0$, has an empty induced subgraph on $V_1$, and a regular induced subgraph on $V_2$. Because $M$ is the direct sum of these components, these structural properties hold globally for $M$, concluding the proof.
\end{proof}

\section{The Average Degree Based Lower Bound}

In this section, we prove Theorem~\ref{theorem:Mne}, stating that given any integers $n>0$ and $e \geq 0$ and a matrix $M \in \M_{n,e}$ such that $\rho(M) = \rho(\M_{n,e})$, the matrix $M$ must be in $\M_{d_1,n_1,d_2,n_2}$, 
with $d_1 = \lfloor 2e/n \rfloor$, $d_2 = d_1+1$, $n_2 = 2e \bmod n$ and $n_1 = n-n_2$. 

We define the function $\rho_1$, based on the function $\rho_0$ defined by equation~\eqref{eq:rho0_d1d2nu}. Given the average degree $\overline{d}$, we set $\nu = \overline{d}-\lfloor\overline{d}\rfloor$ and define
\begin{equation}\label{eq:rho1_def}
    \rho_1(\overline{d}) 
    = \rho_0(d_1,d_2,\nu) 
    = \rho_0(\lfloor\overline{d}\rfloor, \lfloor\overline{d}\rfloor+1, \overline{d}-\lfloor\overline{d}\rfloor).
\end{equation}
The following two lemmas state that $\rho_1(2e/n)$ provides an upper bound on $\rho(\M_{n,e})$ and that $\rho_1$ is a monotone increasing function. 

\begin{lemma}\label{lemma:bounded_by_rho1}
    $\rho(\M_{n,e}) \le \rho_1(\overline{d}) = \rho_1(2 e / n)$.
\end{lemma}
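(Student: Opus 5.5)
The plan is to exhibit an explicit matrix in $\M_{n,e}$ whose spectral radius equals $\rho_1(2e/n)$. Set $d_1=\lfloor 2e/n\rfloor$, $d_2=d_1+1$, $n_2 = 2e \bmod n$ and $n_1=n-n_2$, so that $n_1d_1+n_2d_2 = nd_1+n_2 = 2e$ and $\nu=\overline{d}-\lfloor\overline{d}\rfloor = n_2/n$. Any matrix in $\M_{d_1,n_1,d_2,n_2}$ has integral row sums $d_1$ or $d_2$ and total sum $2e$, so $\M_{d_1,n_1,d_2,n_2}\subseteq \M_{n,e}$ and hence $\rho(\M_{n,e})\le \rho(\M_{d_1,n_1,d_2,n_2})$. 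It therefore suffices to show $\rho(\M_{d_1,n_1,d_2,n_2}) \le \rho_0(d_1,d_2,\nu)=\rho_1(\overline{d})$.

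First I dispose of the degenerate cases. If $n_2=0$, i.e.\ $n\mid 2e$, then the constant matrix $(d_1/n)_{n\times n}$ lies in $\M_{n,e}$ and has $\rho=d_1=\overline{d}$. Formula \eqref{eq:rho0_d1d2nu} at $\nu=0$ (second branch) also gives $\rho_0=d_1$. So the bound holds in this case.

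Now assume $n_2>0$. If $n_1d_1\le n_2d_2$, Lemma~\ref{lemma:bounded_by_rho0} applies directly and gives $\rho(\M_{d_1,n_1,d_2,n_2})\le \rho_0(d_1,n_1,d_2,n_2)$. This is the first branch of \eqref{eq:rho0_d1d2nu} evaluated at $\nu=n_2/n$, since $n_1/n_2=(1-\nu)/\nu$. Otherwise $n_1d_1>n_2d_2$. I then swap roles: apply Lemma~\ref{lemma:bounded_by_rho0} with $(d_2,n_2,d_1,n_1)$ in place of $(d_1,n_1,d_2,n_2)$. This requires $n_1>0$, which holds, and it uses the matrix with zero block on $V_2$. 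The resulting value is exactly the second branch of \eqref{eq:rho0_d1d2nu}.

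The only delicate point is the bookkeeping. The branch condition $\nu\ge d_1/(d_1+d_2)$ in \eqref{eq:rho0_d1d2nu} is equivalent to $n_1d_1\le n_2d_2$, so the two cases of the lemma line up exactly with the two branches of the piecewise definition of $\rho_0$. The edge case $d_1=0$, where $0<\overline{d}<1$, is covered by the first branch, because then $n_1d_1=0\le n_2d_2$; Lemma~\ref{lemma:bounded_by_rho0} allows $d_1=0$.
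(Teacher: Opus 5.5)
Your proposal is correct and is essentially the paper's argument. The paper's proof is a one-liner combining the inclusion $\M_{d_1,n_1,d_2,n_2}\subset\M_{n,e}$ with the bound $\rho(\M_{d_1,n_1,d_2,n_2})\le\rho_0$ from Lemma~\ref{lemma:bounded_by_rho0}, with the two degree classes exchanged when $n_1d_1>n_2d_2$. You make explicit the branch bookkeeping and the case $n\mid 2e$ that the paper leaves implicit.

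The only slip is cosmetic. When $e=0$ you have $d_1/(d_1+d_2)=0=\nu$, so \eqref{eq:rho0_d1d2nu} formally puts you in the first branch, where $\frac{1-\nu}{\nu}$ is undefined, rather than the second. This does not matter, since $\M_{n,0}=\{0\}$ and the bound is trivial there.
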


\begin{proof}
    Immediately follows from the inclusion $\M_{d_1,n_1,d_2,n_2} \subset \M_{n,e}$ and the fact that $\rho_1(\overline{d}) = \rho_0(d_1,d_2,\nu)$ for the appropriate $\nu$.
\end{proof}

\begin{lemma}\label{lemma:rho1_monotone}
    The function $\rho_1(x)$ is strictly increasing for $x \ge 1$, it is the constant one for $0 < x \le 1$, and $\rho_1(0) = 0$.
\end{lemma}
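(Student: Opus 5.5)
We compute $\rho_1$ directly from the two-branch formula~\eqref{eq:rho0_d1d2nu}, one unit interval of $\overline{d}$ at a time. The monotonicity on each piece comes from Lemma~\ref{lemma:rho0_monotone}. The pieces are then glued by checking continuity at the integers.

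\emph{Step 1: the range $0<x\le 1$.} For $0<x<1$ we have $d_1=0$, $d_2=1$ and $\nu=x\in(0,1)$. Since $\frac{d_1}{d_1+d_2}=0\le\nu$, the first branch of~\eqref{eq:rho0_d1d2nu} applies. The terms containing $d_1$ vanish, leaving $\frac12[1+\sqrt{1}]=1$. At $x=1$ we have $d_1=1$, $d_2=2$ and $\nu=0<\frac13$, so the second branch applies. Its $d_2\frac{\nu}{1-\nu}$ terms vanish, giving $\frac12[1+\sqrt{1}]=1$. Hence $\rho_1\equiv 1$ on $(0,1]$.

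\emph{Step 2: the value $x=0$.} Here $d_1=0$, $d_2=1$ and $\nu=0$. This is the degenerate situation $n_2=0$, where every vertex has degree $d_1=0$. Formally this is the second branch evaluated at $\nu=0$, i.e.\ $\rho_0=d_1=0$, which is consistent with $\rho(\M_{n,0})=\rho(0)=0$. This is the only delicate point, since the first branch is undefined at $\nu=0$ because of the term $\frac{1-\nu}{\nu}$. I will state explicitly that $\nu=0$ is read through the second branch, in which case the value is $d_1$.

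\emph{Step 3: strict monotonicity for $x\ge 1$.} On each interval $[k,k+1)$ with integer $k\ge1$ we have $\rho_1(x)=\rho_0(k,k+1,x-k)$ with $0<k<k+1$. By Lemma~\ref{lemma:rho0_monotone}, $\rho_1$ is strictly increasing on each such interval. It remains to show that $\rho_1$ does not drop at the integers, and for this I check continuity.

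First, $\rho_0(d_1,d_2,\cdot)$ is continuous on $[0,1]$, because at $\nu=\frac{d_1}{d_1+d_2}$ both branches give $\sqrt{d_1d_2}$. Next, consider the left limit at an integer $k\ge2$. As $\nu\to1^-$, the first branch of $\rho_0(k-1,k,\nu)$ tends to $\frac12[k+\sqrt{k^2}]=k$. On the other side, the value at $k$ is $\rho_0(k,k+1,0)$ from the second branch, which equals $\frac12[k+\sqrt{k^2}]=k$. So $\rho_1$ is continuous at every integer $k\ge 2$, and it is trivially continuous at $x=1$ from the right.

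A continuous function that is strictly increasing on each interval $[k,k+1]$ is strictly increasing on $[1,\infty)$. This completes the plan. All the difficulty is the bookkeeping at the branch endpoints ($\nu=0$ and $\nu=1$), and Step 3 reduces it to evaluating two square roots.
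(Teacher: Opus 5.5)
Your proof is correct and uses the same ingredients as the paper: Lemma~\ref{lemma:rho0_monotone} on each unit interval, plus the endpoint values $\rho_0(k,k+1,0)=k$ and $\rho_0(k,k+1,1)=k+1$. The paper glues the intervals via the sandwich $\lfloor x\rfloor\le\rho_1(x)\le\lceil x\rceil$ and a two-case comparison, where you use continuity at the integers; your explicit remark that $\rho_1(0)$ must be read through the second branch, since the literal first branch is undefined at $\nu=0$, makes precise a convention the paper leaves implicit under ``simple substitution.''
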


\begin{proof}
    The assertion about the value of $\rho_1$ for $x \in [0,1]$ is a simple substitution into equation~\eqref{eq:rho0_d1d2nu}.
    To prove the strict monotonicity for $x \ge 1$, first observe that the monotonicity of $\rho_0$ in Lemma~\ref{lemma:rho0_monotone} implies that 
    \begin{equation}\label{eq:rho1_lbub}
        \lfloor x \rfloor \le \rho_1(x) \le \lceil x \rceil \textrm{ for any }x \ge 1.
    \end{equation}
    Furthermore, if $x$ is not an integer, then its $\nu$ value is in $(0,1)$, so both inequalities are strict. 
    
    Given two numbers $1 \le x_1 < x_2$, we distinguish two cases.
    If $\lfloor x_1 \rfloor = \lfloor x_2 \rfloor$, then $x_1$ and $x_2$ have the same values for $d_1$ and $d_2$ in \eqref{eq:rho1_def}.
    As $x_1$ has a strictly smaller $\nu$ value than $x_2$, the monotonicity of $\rho_0$ implies that $\rho_1(x_1) < \rho_1(x_2)$.

    Otherwise, $\lfloor x_1 \rfloor < \lfloor x_2 \rfloor$, which in conjunction with \eqref{eq:rho1_lbub} yield that:
    \begin{equation}\label{eq:rho1_lbub_x1x2}
        \rho_1(x_1) \leq \lceil x_1 \rceil \leq \lfloor x_2 \rfloor \le \rho_1(x_2).
    \end{equation}
    If at least one of the numbers $x_1, x_2$ is not an integer, then we have at least one strict inequality in \eqref{eq:rho1_lbub_x1x2}
    and the claim follows. On the other hand, if both $x_1,x_2$ are integral then $\rho_1(x_1) = x_1 < x_2 = \rho(x_2)$, concluding the proof.
\end{proof}

We now give several lemmas about the structure of the matrix $M$, when it is also irreducible.

\begin{lemma}\label{lemma:row_sum_diff_le1_cond}
    If $M$ is an irreducible matrix in $\M_{n,e}$ with $\rho(M) = \rho(\M_{n,e})$ and the Perron eigenvector $x$,
    then the degree of any two vertices $i,j$ with $x_i \neq x_j$ differs by exactly one.
\end{lemma}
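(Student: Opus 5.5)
The plan is to rule out the two forbidden possibilities for two vertices $i,j$ with $x_i \neq x_j$: that their degrees are equal, and that they differ by at least two. Both cases will follow from a single mechanism, namely Lemma~\ref{lemma:two_matrices_same_Perron} applied to $M$ and a permuted copy of $M$. Throughout, write $d_i, d_j$ for the (integral) row sums of $M$ at $i$ and $j$.

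\emph{Equal degrees.} If $d_i = d_j$, Corollary~\ref{corollary:x_fixed_on_same_sum_rows} applies directly, since $\M_{n,e}$ is closed under degree-preserving vertex permutations. It gives $x_i = x_j$, contradicting the hypothesis.

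\emph{Degree gap at least two.} Assume without loss of generality that $d_i \ge d_j + 2$. Let $P$ be the permutation matrix swapping $i$ and $j$, and set $M_0 = M$ and $M_1 = P^{-1} M P$.

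1. Note that $M_1$ is symmetric, non-negative and irreducible. It has total sum $2e$ and integral row sums (those of $M$, permuted), so $M_1 \in \M_{n,e}$. Moreover $\rho(M_1) = \rho(M) = \rho(\M_{n,e})$.

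2. Choose $\alpha = 1/(d_i - d_j)$, which lies in $(0,1)$ precisely because the gap is at least two. Row sums are linear in the matrix, so the row sums of $M^{(\alpha)} = (1-\alpha)M_0 + \alpha M_1$ agree with those of $M$ away from $i$ and $j$. At $i$ the row sum becomes $(1-\alpha)d_i + \alpha d_j = d_i - 1$, and at $j$ it becomes $d_j + 1$. All row sums are therefore integral and the total is still $2e$, so $M^{(\alpha)} \in \M_{n,e}$.

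3. Lemma~\ref{lemma:two_matrices_same_Perron} now says that $M_0$ and $M_1$ share a Perron eigenvector. The Perron eigenvector of $M_1 = P^{-1}MP$ is $P^{-1}x$. By the uniqueness up to scaling in Proposition~\ref{proposition:perron}, $P^{-1}x = c\,x$ for some $c > 0$. Since $P$ preserves the Euclidean norm, $c = 1$. Hence $x = Px$, that is $x_i = x_j$, a contradiction.

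This leaves a degree difference of exactly one as the only possibility.

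The step I expect to need the most care is step 2: making sure the convex combination actually stays inside $\M_{n,e}$. That set is not convex, because of the integrality constraint on row sums, and a naive choice such as $\alpha = 1/2$ fails when $d_i + d_j$ is odd. The specific choice $\alpha = 1/(d_i - d_j)$ moves exactly one unit of degree from $i$ to $j$, which is what makes the argument work. It also explains why the argument breaks down, as it should, exactly when the gap is one.
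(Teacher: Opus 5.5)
Your proposal is correct and is essentially the paper's proof. It uses the same transposition $P$, the same weight $\alpha_0 = 1/(d_i-d_j)$ that shifts one unit of degree from $i$ to $j$, the same appeal to Lemma~\ref{lemma:two_matrices_same_Perron}, and the equal-degree case is dispatched by Corollary~\ref{corollary:x_fixed_on_same_sum_rows}; your step 3, which derives $x_i = x_j$ from $P^{-1}x = x$ via Perron uniqueness and norm preservation, only spells out what the paper states in one line.
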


\begin{proof}
    Given such a matrix $M$, let $d_i := (M\one)_i$ denote the degree of vertex $i$.
By Corollary~\ref{corollary:x_fixed_on_same_sum_rows}, if $x_i \neq x_j$ then necessarily $d_i \neq d_j$.
Suppose, for the sake of contradiction, that there exist vertices $i,j$ such that
\[
x_i \neq x_j \quad\text{and}\quad d_i - d_j \ge 2.
\]

Let $M^{(i,j)} = P^{-1} M P$ where $P$ is the permutation matrix corresponding to the transposition $(i,j)$, and for $0 \le \alpha \le 1$ define
\[
M^{(\alpha)} := (1-\alpha) M + \alpha M^{(i,j)}.
\]
We claim that $M^{(\alpha_0)} \in \mathcal M_{n,e}$, where $\alpha_0 := 1/(d_i-d_j)$.

Since nonnegativity, symmetry, and irreducibility are preserved under convex combinations, it suffices to verify that
$M^{(\alpha_0)} \one \in \mathbb{Z}^n$.
For $k \notin \{i,j\}$, we have $(M^{(i,j)}\one)_k = (M\one)_k = d_k$.
For rows $i,j$ we compute
\[
\begin{aligned}
(M^{(\alpha_0)}\one)_i &= (1-\alpha_0)d_i + \alpha_0 d_j = d_i - 1, \\
(M^{(\alpha_0)}\one)_j &= (1-\alpha_0)d_j + \alpha_0 d_i = d_j + 1.
\end{aligned}
\]
Therefore $M^{(\alpha_0)} \in \mathcal M_{n,e}$, proving the claim.

Since $d_i - d_j \ge 2$, we have $0 < \alpha_0 < 1$.
Thus, Lemma~\ref{lemma:two_matrices_same_Perron} implies that the matrices $M$, $M^{(i,j)}$ share the same Perron eigenvector.
In particular, this implies $x_i = x_j$, contradicting our assumption.
Therefore, no such pair $i,j$ can exist.
\end{proof}

\begin{lemma}\label{lemma:row_sum_diff_le1}
    If $M$ is an irreducible matrix with $\rho(M) = \rho(\M_{n,e})$ then $\dmax - \dmin \le 1$.
\end{lemma}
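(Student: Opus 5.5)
The plan is to show first that the Perron vector $x$ of $M$ takes at most two distinct values, and then to treat same-$x$ pairs of vertices with an averaging argument. Lemma~\ref{lemma:row_sum_diff_le1_cond} already controls pairs with different $x$-values, so the only remaining danger is two vertices with equal $x$-value whose degrees differ by at least $2$.

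\emph{Step 1: $x$ takes at most two values.} Suppose $x$ took three distinct values at vertices $a,b,c$. By Lemma~\ref{lemma:row_sum_diff_le1_cond}, the degrees $d_a,d_b,d_c$ would pairwise differ by exactly one. No three integers have this property, so this is impossible.

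\emph{Step 2: the one-value case.} If $x$ is constant, then row $i$ of $Mx=\rho x$ reads $\rho\, x_i = d_i\, x_i$. Since $x_i>0$, this gives $d_i=\rho$ for every $i$, so $M$ is regular and $\dmax-\dmin=0$.

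\emph{Step 3: the two-value case.} Let $x\equiv\chi_U$ on $U$ and $x\equiv\chi_W$ on $W$, with $\chi_U\neq\chi_W$ and $U\cup W=[n]$. By Lemma~\ref{lemma:row_sum_diff_le1_cond}, for every $w\in W$ all degrees of vertices in $U$ lie in $\{d_w-1,d_w+1\}$, and symmetrically.

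Suppose, for contradiction, that $U$ contains $u_1,u_2$ with $d_{u_1}-d_{u_2}\ge 2$. Then every $w\in W$ must satisfy $d_w-1=d_{u_2}$ and $d_w+1=d_{u_1}$. Hence all of $W$ has a common degree $d$, and $d_{u_1}=d+1$, $d_{u_2}=d-1$. The symmetric situation with $U$ and $W$ exchanged is handled identically.

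\emph{Step 4: the averaging contradiction.} As in the proof of Lemma~\ref{lemma:row_sum_diff_le1_cond}, set
\[
M' = \tfrac12\bigl(M + P^{-1}MP\bigr),
\]
where $P$ is the permutation matrix swapping $u_1$ and $u_2$. Then $M'$ has the following properties.
\begin{enumerate}
\item It is nonnegative, symmetric and irreducible, since these properties are preserved under convex combinations.
\item Its row sums are integral: rows $u_1$ and $u_2$ now both have sum $d$, and all other rows are unchanged. Hence $M'\in\M_{n,e}$.
\item By convexity, $\rho(M')=\rho(\M_{n,e})$.
\item By Lemma~\ref{lemma:two_matrices_same_Perron}, $M'$ has the same Perron vector $x$ as $M$.
\end{enumerate}
Now in $M'$ the vertex $u_1$ and any $w\in W$ both have degree $d$, yet $x_{u_1}=\chi_U\neq\chi_W=x_w$. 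This contradicts Corollary~\ref{corollary:x_fixed_on_same_sum_rows}, or equally Lemma~\ref{lemma:row_sum_diff_le1_cond}, applied to the minimizer $M'$. Here $W\neq\emptyset$ because $x$ takes two values.

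\emph{Conclusion.} Each of $U$ and $W$ therefore has degree spread at most $1$. Combined with the cross-set constraint of Step 3, the set of all degrees has the form $\{d,d+1\}$, so $\dmax-\dmin\le 1$.

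I expect the main obstacle to be Step 3, namely realising that same-$x$ pairs with large degree gap force the entire opposite class onto the single middle degree. Once that is seen, the averaging trick of Lemma~\ref{lemma:row_sum_diff_le1_cond}, with $\alpha=1/2$, manufactures an equal-degree pair with unequal $x$-values and yields the contradiction.
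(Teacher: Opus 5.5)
Your proof is correct, and your Steps 1--3 essentially reproduce the paper's preliminary claim. Both arguments use Lemma~\ref{lemma:row_sum_diff_le1_cond} to reduce to a single bad configuration: $x$ takes one value on the vertices of degrees $d-1$ and $d+1$, and a different value on the vertices of degree $d$.

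You diverge from the paper in how you rule out that configuration. The paper applies the derivative-based Lemma~\ref{lemma:generalized_perturbation} to conclude that one of the induced subgraphs $M_{|V_0\cup V_2}$, $M_{|V_1}$ is empty. It then derives a contradiction from the Perron eigenvector equations separately in each case, obtaining $r+s=-2\rho$ and $2\chi_1=0$ respectively.

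You instead stay within the convexity toolkit. Averaging $M$ with its conjugate under the swap $(u_1\,u_2)$ gives, by Lemma~\ref{lemma:two_matrices_same_Perron}, another irreducible minimizer with the same Perron vector $x$. In it, $u_1$ now has degree $d$, like the vertices of $W$, while $x_{u_1}\neq x_w$, so Corollary~\ref{corollary:x_fixed_on_same_sum_rows} gives the contradiction.

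Your route is shorter and needs no eigenvalue derivatives and no case split. It also exposes a reusable principle: a proper convex combination of a minimizer with a permuted copy that still has integral row sums is again a minimizer with the same Perron vector. Hence $x$ must be constant on each of its degree classes too. The paper's route instead reuses perturbation machinery it needs anyway for Theorem~\ref{theorem:Mn1d1n2d2}.

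One small tidy-up: your conclusion follows at once because every pair of vertices has degree difference at most one. Within $U$ or within $W$ this is Step~4, and across the two sets it is Lemma~\ref{lemma:row_sum_diff_le1_cond}. So you need not argue separately that the degree set is exactly $\{d,d+1\}$.
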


\begin{proof}
    Given such a matrix $M$, we start with the following claim:

    \begin{claim}
        If $M$ is an irreducible matrix in $\M_{n,e}$ with $\rho(M) = \rho(\M_{n,e})$, and $\dmin, \dmax$ are its minimal and maximal vertex degrees,
        then $\dmax - \dmin \le 2$.
        Furthermore, if $\dmax - \dmin = 2$, 
        the vertices can be partitioned into three non-empty sets $[n] = V_0 \sqcup V_1 \sqcup V_2$, where $V_r$ is the set of degree $\dmin+r$ vertices,
        and the Perron eigenvector $x$ of $M$ is $\chi_0$ on $V_0 \cup V_2$ 
        and $\chi_1$ on $V_1$ for some scalars $\chi_0 \neq \chi_1$.
    \end{claim}
    \begin{claimproof}
        Given such a matrix $M$ with Perron eigenvector $x$, we partition the vertices by degree: $[n] = V_0 \sqcup V_1 \sqcup \cdots \sqcup V_l$,
        so that $v \in V_i$ if its degree is $d_i$, where $d_0 < d_1 < \cdots < d_l$.
        By Corollary~\ref{corollary:x_fixed_on_same_sum_rows}, $x$ is constant on each of the sets $V_i$, where $x|_{V_i} = \chi_i \one_{V_i}$ for some positive constants $\chi_i$.
        If the Perron eigenvector $x$ is a scalar multiple of $\one$, then $Mx = \rhomin x$ implies that all vertices have the same degree, and the claim follows.
        Otherwise, if $\chi_0 \neq \chi_l$ then $d_l-d_0 = 1$ by Lemma~\ref{lemma:row_sum_diff_le1_cond}, and again the claim follows.
        It remains to prove the claim for the case where $\chi_0 = \chi_l$ and there is some index $i$ so that $\chi_i \neq \chi_0$.
        Employing Lemma~\ref{lemma:row_sum_diff_le1_cond} twice yields that 
        $d_i - d_0= d_l - d_i = 1$ and hence $d_l - d_0 = 2$ as claimed.
    \end{claimproof}
    
    By the claim, $\dmax - \dmin \le 2$. Assume by negation that $\dmax - \dmin = 2$.
    Then, by the same claim, the vertices can be partitioned into three non-empty sets $[n] = V_0 \sqcup V_1 \sqcup V_2$, 
    so that $V_r$ is the set of vertices with degree $\dmin+r$.
    Furthermore, the Perron eigenvector $x$ is $\chi_0$ on $V_0 \cup V_2$ and $\chi_1 \neq \chi_0$ on $V_1$.
    Therefore, by Lemma~\ref{lemma:generalized_perturbation}, at least one of the induced graphs $M_{|V_0\cup V_2}$, $M_{|V_1}$ is empty. We first assume that $M_{|V_1}$ is an empty graph.  
    Given vertices $i \in V_0$, $j \in V_1$ and $k \in V_2$, the equations for $x$ being the Perron eigenvector of $M$ with eigenvalue $\rho$ are:
    \begin{align*}
        \rho \cdot \chi_0 &= (Mx)_i = \dmin \cdot\chi_0 + r \cdot (\chi_1 - \chi_0)          \\
        \rho \cdot \chi_1 &= (Mx)_j = (\dmin + 1)\cdot\chi_0                                 \\
        \rho \cdot \chi_0 &= (Mx)_k = (\dmin + 2)\cdot\chi_0 + s \cdot (\chi_1 - \chi_0),    \\
    \end{align*}
    where $r = (M \one_{V_1})_i$ and $s = (M \one_{V_1})_k$.
    Summing up the first and third equations and substituting $\rho \cdot\chi_1$ instead of 
    $(\dmin + 1) \cdot \chi_0$ yields:
    \begin{align*}
        2\rho \chi_0    &= 2(\dmin + 1) \chi_0 + (r + s) (\chi_1 - \chi_0) 
                         = 2 \rho \chi_1 + (r + s) (\chi_1 - \chi_0) \\
        2 \rho (\chi_0-\chi_1) &= (r + s) (\chi_1 - \chi_0) \\
        - 2 \rho        &= r+s,
    \end{align*}
    which is impossible, as $r,s \ge 0$ and $\rho>0$.

    The remaining case is that $M_{|V_0 \cup V_2}$ is an empty graph. 
    Given vertices $i \in V_0$, $j \in V_1$ and $k \in V_2$, the equation for $x$ being the Perron eigenvector of $M$ yields:
    \begin{align*}
        \rho \cdot \chi_0 &= (Mx)_i = \dmin \cdot \chi_1\\
        \rho \cdot \chi_0 &= (Mx)_k = (\dmin + 2) \cdot \chi_1,
    \end{align*}
    implying that $2\chi_1 = 0$. As the Perron eigenvector is strictly positive, this is a contradiction, concluding the proof of the lemma.
\end{proof}

\MneTheorem*

\begin{proof}
    We prove the claim by induction on $n$, where the base case for $n=1$ is trivial.

    Suppose that $n>1$ and assume the statement is true for all smaller values of $n$. 
    Let $M$ be a matrix that minimizes the spectral radius $\rho$ on $\M_{n,e}$. 
    Consider first the case where matrix $M$ has a non-trivial decomposition into a direct sum of smaller matrices, $M = M_1 \oplus \cdots \oplus M_k$ for $k>1$. 
    Let $\overline{d_i}$ denote the average degree of the matrix $M_i$.
    Then, using Lemma~\ref{lemma:bounded_by_rho1} to upper bound $\rho(M)$, Lemma~\ref{lemma:rho1_monotone} for the monotonicity of $\rho_1$, and applying the induction hypothesis to each connected component separately, we get:
    \[
        \rho_1(\overline{d}) 
        \ge \rho(M) 
        = \max_i \rho(M_i) 
        = \max_i \rho_1(\overline{d_i}) \ge \rho_1(\max_i \overline{d_i}) \ge \rho_1(\overline{d}).
    \]
    Therefore, we have equality $\rho(M) = \rho_1(\overline{d})$.
    Moreover, if $\overline{d} \ge 1$, as $\rho_1(x)$ has strict monotonicity for $x \ge 1$, then $\max_i \overline{d_i} = \overline{d}$, so all $M_i$ have average degree $\overline{d}$.
    Therefore, by induction, each $M_i$ may only have vertex degrees $d_1,d_2$, and so does $M$, implying that $M \in \M_{d_1,n_1,d_2,n_2}$. 
    Otherwise, $\overline{d} < 1$, so $\overline{d_i} \le 1$ for all $i$. Again, by induction, this implies that vertex degrees are either $d_1=0$ or $d_2=1$ for each $M_i$ and thus for $M$, implying the required result as well.

    It remains to handle the case where $M$ is irreducible. 
    By Lemma~\ref{lemma:row_sum_diff_le1}, the difference between the minimal and maximal degrees is at most one. 
    Therefore, $M \in \M_{d_1,n_1,d_2,n_2}$ with $d_1,n_1,d_2,n_2$ as above. 
\end{proof}

\section{Scaling Law For A Large Average Degree}
Recall that the lower bounds on $\rho$ in both Theorem~\ref{theorem:Mn1d1n2d2} and Theorem~\ref{theorem:Mne} are given by the function $\rho_0$,
defined by \eqref{eq:rho0_d1d2nu}, that depends only on $d_1, d_2$ and the ratio $\nu = n_2/(n_1+n_2)$. 
In light of Figure~\ref{fig:rho_opt} (b), it is natural to ask about the asymptotic behavior of $\rho_0(d_1,d_2,\nu)$ when the average degree is large. More precisely, for fixed numbers $\Delta>0$ and $\nu \in [0,1)$, 
we would like to study the function $\rho_0(d,d+\Delta,\nu)$ when $d$ is large, and with $\Delta=1$ for the $\M_{n,e}$ case. 
The following theorem gives a precise answer to this question.
\begin{theorem}\label{theorem:rhoopt_residue}
    Given $\Delta>0$, $\nu \in [0,1)$ and $d \ge 0$, define the function $f(d,\nu,\Delta)=\rho_0(d,d+\Delta,\nu)-\overline{d}$, where $\overline{d} = d + \nu \Delta$,
    then: 
    \begin{equation*}
        \lim_{d \rightarrow \infty} \bigl(d \cdot f(d,\nu,\Delta)\bigr) = \Delta^2 \nu(1-\nu) \cdot \max(\nu,1-\nu).
    \end{equation*}
\end{theorem}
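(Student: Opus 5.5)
The plan is to avoid expanding the square root in \eqref{eq:rho0_d1d2nu} directly. Instead I will use that $\rho_0$ is the largest root of the characteristic polynomial of the $2\times 2$ matrix $M'$ from Lemma~\ref{lemma:bounded_by_rho0}. Evaluating that quadratic at $\overline{d}$ should give an exact expression for $f$, from which the limit can be read off.

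\textbf{Choice of branch.} Put $d_1=d$ and $d_2=d+\Delta$. The branch threshold in \eqref{eq:rho0_d1d2nu} is $\frac{d}{2d+\Delta}$, which is strictly below $\tfrac12$ and tends to $\tfrac12$ from below as $d\to\infty$.
\begin{itemize}
\item If $\nu\ge\tfrac12$, then for all large $d$ the first branch applies.
\item If $\nu<\tfrac12$, then for all large $d$ the second branch applies.
\item The boundary value $\nu=\tfrac12$ always falls in the first branch, so no separate treatment is needed.
\item The case $\nu=0$ is trivial, since then $\rho_0=d=\overline{d}$ and both sides of the claimed limit vanish. So assume $0<\nu<1$.
\end{itemize}

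\textbf{First branch ($\nu\ge\tfrac12$).} Set $x=\frac{1-\nu}{\nu}$ and $b=d+\Delta-dx$. Then $\rho:=\rho_0$ is the largest root of
\[
p(\lambda)=\lambda^2-b\lambda-d^2x .
\]
Compute $p(\overline{d})$ using $\overline{d}-b=dx-(1-\nu)\Delta$:
\[
p(\overline{d})=(d+\nu\Delta)\bigl(dx-(1-\nu)\Delta\bigr)-d^2x
= d\Delta\bigl(\nu x-(1-\nu)\bigr)-\nu(1-\nu)\Delta^2 .
\]
The term in $d$ cancels because $\nu x=1-\nu$, so
\[
p(\overline{d})=-\nu(1-\nu)\Delta^2 .
\]
Since $p(\rho)=0$, factoring $p(\rho)-p(\overline{d})=(\rho-\overline{d})(\rho+\overline{d}-b)$ gives the exact identity
\[
f(d,\nu,\Delta)=\rho-\overline{d}=\frac{\nu(1-\nu)\Delta^2}{\rho+\overline{d}-b}.
\]
It remains to find the growth of the denominator. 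By \eqref{eq:rho1_lbub}-type bounds from Lemma~\ref{lemma:rho0_monotone}, $d\le\rho\le d+\Delta$, so $\rho/d\to 1$. Also $(\overline{d}-b)/d\to x$. Hence
\[
\frac{\rho+\overline{d}-b}{d}\to 1+x=\frac1\nu ,
\]
and therefore
\[
d\,f\to\nu\cdot\nu(1-\nu)\Delta^2 ,
\]
which is the claimed limit since $\max(\nu,1-\nu)=\nu$ here.

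\textbf{Second branch ($\nu<\tfrac12$).} Repeat the computation with the roles of the two degree classes swapped. Set $y=\frac{\nu}{1-\nu}$ and $b'=d-(d+\Delta)y$, and use $q(\lambda)=\lambda^2-b'\lambda-(d+\Delta)^2y$. The same cancellation, now via $(1-\nu)y=\nu$, should give
\[
q(\overline{d})=-\nu(1-\nu)\Delta^2 .
\]
The denominator $\rho+\overline{d}-b'$ behaves like $d(1+y)=d/(1-\nu)$. This yields the limit $\Delta^2\nu(1-\nu)(1-\nu)$, as required.

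\textbf{Main obstacle.} The only delicate step is the algebraic cancellation of the order-$d$ term in $p(\overline{d})$ and $q(\overline{d})$. That cancellation is what makes $f$ of order $1/d$ rather than order $1$. I will verify it carefully in both branches. The denominator asymptotics are routine once the a priori bounds $d\le\rho\le d+\Delta$ are available.
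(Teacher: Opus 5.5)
Your proposal is correct, and it takes a genuinely different route from the paper's proof.

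\textbf{What the paper does.} It substitutes $d_1=d$, $d_2=d+\Delta$ into the explicit square-root formula for $\rho_0$. It then factors $d/\nu$ (respectively $d/(1-\nu)$) out of the radical and Taylor-expands $\sqrt{1+z}$ to second order. The first-order terms cancel, and the second-order term yields $\nu^2(1-\nu)\Delta^2$ (respectively $\nu(1-\nu)^2\Delta^2$) up to $O(1/d)$.

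\textbf{What you do.} You evaluate the characteristic polynomial of the reduced $2\times 2$ matrix at $\overline{d}$. Both of your cancellations check out exactly: $p(\overline{d})=q(\overline{d})=-\nu(1-\nu)\Delta^2$. This gives the exact identity $f=\nu(1-\nu)\Delta^2/(\rho+\overline{d}-b)$, and its analogue with $b'$, with a denominator that is manifestly positive. The vanishing of the order-$d$ term in $p(\overline{d})$ is the same phenomenon as the cancellation of the first-order Taylor term in the paper. In your version it is transparent (it is just $\nu x=1-\nu$), and you never have to carry a second-order expansion or control its remainder.

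\textbf{What each route buys.} The paper's route is purely mechanical from the closed formula and needs no a priori information on $\rho$. Yours gives a non-asymptotic statement: $f>0$ for every $d$ when $0<\nu<1$. Combined with $\overline{d}<\rho\le d+\Delta$, it sandwiches $d\,f$ between two explicit expressions that both converge to the limit at rate $O(1/d)$. Your branch bookkeeping is also slightly more careful than the paper's. The paper attributes $\nu=1/2$ to the second branch, although it lies in the first branch for every $d$; this is harmless, since both limits agree there.

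\textbf{One small tightening.} You do not need to import $d\le\rho\le d+\Delta$ from the monotonicity lemma, which requires $d>0$ and the endpoint values of $\rho_0$ at $\nu=0,1$. Your quadratic gives these bounds directly.
\begin{itemize}
\item $p(\overline{d})<0$ forces $\rho>\overline{d}$.
\item $p(d+\Delta)=dx\Delta\ge 0$ and $d+\Delta\ge \max(b,0)\ge b/2$ force $\rho\le d+\Delta$.
\item Likewise $q(d+\Delta)=(d+\Delta)\Delta\ge 0$ with $d+\Delta>b'$ in the second branch.
\end{itemize}
This makes your argument fully self-contained.
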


\begin{proof}
    Given $d \ge 0$, $\Delta > 0$ and $0 \le \nu \le 1$, the average degree is $\overline{d} = d+\nu \Delta$. 
    We rewrite equation~\eqref{eq:rho0_d1d2nu} for $\rho_0$ where $d_1 = d$ and $d_2 = d+\Delta$, as follows:
    \begin{align}
        \rho_0(d,d+\Delta,\nu)
        = \begin{cases} \scriptstyle
                \frac{1}{2}\left[d+\Delta - d \frac{1-\nu}{\nu} + \sqrt{4d^2 \frac{1-\nu}{\nu} + (d+\Delta - d \frac{1-\nu}{\nu})^2}\,\right]        & \textrm{if } \nu \ge \frac{d}{2d+\Delta} \\
                \addlinespace[0.1cm] \scriptstyle
                \frac{1}{2}\left[d - (d+\Delta) \frac{\nu}{1-\nu} + \sqrt{4(d+\Delta)^2\frac{\nu}{1-\nu} + (d - (d+\Delta) \frac{\nu}{1-\nu})^2}\,\right] & \textrm{if } \nu < \frac{d}{2d+\Delta}
            \end{cases} \label{eq:rhoopt_simple}
   \end{align}
    Plugging this into the formula for $f$ and simplifying yields:
   \begin{align*}
        d \cdot f(d,\nu,\Delta) &=& \begin{cases} \scriptstyle
                \frac{d}{2}\left[(1 - 2\nu)\Delta - \frac{d}{\nu} + \frac{d}{\nu}\sqrt{1 + \frac{2\nu(2\nu-1)\Delta}{d} + \frac{\nu^2\Delta^2}{d^2}}\,\right] & \textrm{if } \nu \ge \frac{d}{2d+\Delta}  \\
                \addlinespace[0.1cm] \scriptstyle
                \frac{d}{2}\left[- \Delta \frac{(3-2\nu)\nu}{1-\nu} - \frac{d}{1-\nu}  + \frac{d}{1-\nu}\sqrt{1+\frac{2(3-2\nu)\nu\Delta}{d}+\frac{(4-3\nu)\nu\Delta^2}{d^2}}\,\right]     & \textrm{if } \nu < \frac{d}{2d+\Delta}
            \end{cases}
    \end{align*}
    Using the Taylor expansion $\sqrt{1+z} = 1 + \frac{z}{2} - \frac{z^2}{8} + O(z^3)$ yields:
   \begin{align*}
        d \cdot f(d,\nu,\Delta) 
        &= \begin{cases} \scriptstyle
                \frac{d}{2}\left[(1 - 2\nu)\Delta - \frac{d}{\nu} + \frac{d}{\nu}(1+\frac{\nu(2\nu-1)\Delta}{d}+\frac{2\nu^3(1-\nu)\Delta^2}{d^2}+O(\frac{1}{d^3}))\,\right]  & \textrm{if } \nu \ge \frac{d}{2d+\Delta}  \\
                \addlinespace[0.1cm] \scriptstyle
                \frac{d}{2}\left[-\Delta \frac{(3-2\nu)\nu}{1-\nu} - \frac{d}{1-\nu}  + \frac{d}{1-\nu}(1+\frac{(3-2\nu)\nu\Delta}{d}+\frac{2(1-\nu)^3\nu\Delta^2}{d^2}+O(\frac{1}{d^3}))\,\right] & \textrm{if } \nu < \frac{d}{2d+\Delta}
            \end{cases} \\
        &= \begin{cases}
                \nu^2(1-\nu)\Delta^2 + O(\frac{1}{d})\,  & \textrm{if } \nu \ge \frac{d}{2d+\Delta}  \\
                \addlinespace[0.1cm]
                (1-\nu)^2 \nu\Delta^2 + O(\frac{1}{d})   & \textrm{if } \nu < \frac{d}{2d+\Delta}
            \end{cases}            
    \end{align*}
    Taking the limit $d \rightarrow \infty$ yields $\nu^2(1-\nu)\Delta^2$ if $\nu > \frac{1}{2}$ and $(1-\nu)^2\nu\Delta^2$ for $\nu \le \frac{1}{2}$, and the Theorem is proved.
\end{proof}

\section{Discrete Graphs}\label{sect:disrete_graphs}

In this section, we establish a necessary and sufficient condition on $d_1, n_1, d_2, n_2$ to guarantee the existence of a simple graph $G \in \mathcal{G}_{d_1, n_1, d_2, n_2}$ such that $\rho(G) = \rho(\mathcal{M}_{d_1, n_1, d_2, n_2})$. Furthermore, we prove the analogue, Theorem~\ref{theorem:when_biregular_simple_connected_graph}, which imposes the additional requirement that $G$ is connected.

We start with the proof of Theorem~\ref{theorem:when_biregular_simple_graph}, which allows us to characterize the pairs $(n, e)$ for which there exists a simple graph $G$ satisfying $\rho(G) = \rho(\mathcal{M}_{n, e})$. By Theorem~\ref{theorem:Mne}, such $\rho$-minimizing graphs must belong to $\mathcal{G}_{d_1, n_1, d_2, n_2}$, where the parameters are determined by $n$ and $e$ with $d_2 = d_1 + 1$. Consequently, $(n, e)$ pairs for which $\rho(\mathcal{M}_{n, e}) = \rho(\mathcal{G}_{n, e})$ correspond to cases in which Hong's conjecture can be answered affirmatively.

We then employ Theorem~\ref{theorem:when_biregular_simple_graph} to deduce two primary consequences regarding the existence of such $(n, e)$ pairs. First, within this section, Theorem~\ref{theorem:nu_simple_graph} shows that the ratio $\nu=n_2/n$ must belong to a discrete set of values determined by $d_1$ and $d_2$. Second, in Section~\ref{section:num_edges_simple_graph}, we apply these structural conditions to prove Theorem~\ref{theorem:num_edges_simple_graph}, demonstrating that for a fixed number of vertices $n$, the number of non-trivial values for $e$ is at least linear in $n$.

\subsection{Auxiliary Realization Lemmas}

Following are two auxiliary lemmas used in the proof of Theorem~\ref{theorem:when_biregular_simple_graph}. The lemmas give necessary and sufficient conditions for the existence of simple graphs with particular specifications. 
Their proofs are based on stronger versions of the Erd\H{o}s-Gallai 1960 and of the Gale-Ryser 1957 conditions.

\begin{lemma}\label{lemma:simple_connected_regular}
    Given a positive integer $n$ and a non-negative integer $d$ so that $nd$ is even and $d \le n-1$, there exists a $d$-regular simple graph with $n$ vertices.
\end{lemma}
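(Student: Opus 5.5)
The plan is to give an explicit construction, namely a circulant graph. Label the vertices by $\mathbb{Z}_n = \{0,1,\ldots,n-1\}$. For a set of nonzero residues $S \subseteq \mathbb{Z}_n$ with $S = -S$, the circulant graph $C_n(S)$ joins $u$ and $v$ whenever $u - v \in S$. This graph is simple, since $0 \notin S$ rules out loops and the adjacency relation is symmetric because $S=-S$. It is also $|S|$-regular, since the neighbours of $u$ are exactly $u+s$ for $s\in S$, and these are pairwise distinct. So it suffices to produce a symmetric set $S\subseteq \mathbb{Z}_n\setminus\{0\}$ with $|S| = d$.

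The construction splits by the parity of $d$. If $d$ is even, take $S=\{\pm 1, \pm 2, \ldots, \pm d/2\}$. Since $d \le n-1$, we have $d/2 < n/2$, so for $1\le k\le d/2$ the residues $k$ and $-k$ are distinct and nonzero. The $d$ listed residues are pairwise distinct, because they are $1,\ldots,d/2$ and $n-d/2,\ldots,n-1$, and these two ranges do not overlap as $d/2 < n - d/2$.

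If $d$ is odd, then the hypothesis that $nd$ is even forces $n$ to be even, so the residue $n/2$ is its own negative. Take
\[
S=\{\pm 1,\ldots,\pm (d-1)/2\}\cup\{n/2\}.
\]
We need $(d-1)/2 < n/2$, which holds since $d\le n-1$. This ensures that $n/2$ is not among the paired elements and that the pairs are distinct, as in the even case. Hence $|S| = d$. The degenerate case $d=0$ is simply the empty graph, which is covered by the even case with $S=\emptyset$.

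The only real point of care is the distinctness and count bookkeeping: checking that $d\le n-1$ guarantees none of the chosen residues coincide, and that in the odd case $n/2$ is a genuine extra neighbour contributing exactly one to the degree. No connectivity is required by the statement, so nothing further is needed. Alternatively, one could invoke the Erdős–Gallai condition for the constant sequence $(d,\ldots,d)$, but the circulant construction is direct and avoids it.
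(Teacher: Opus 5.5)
Your circulant construction is correct. The bookkeeping holds: for even $d$ the ranges $\{1,\dots,d/2\}$ and $\{n-d/2,\dots,n-1\}$ are disjoint because $d<n$, and for odd $d$ the integer $n$ is even and $n/2$ lies strictly between the two ranges because $(d-1)/2<n/2$. The degenerate cases $d=0$ and $n=1$ are also covered.

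The paper argues differently. It applies the Tripathi--Vijay strengthening of Erd\H{o}s--Gallai (Proposition~\ref{proposition:graphic_sequence}). Under that criterion a constant sequence $(d,\dots,d)$ needs only the parity condition and the single inequality at $k=n$, namely $nd\le n(n-1)$. That route is a two-line existence argument, and it also gives the (easy) converse. It parallels Lemma~\ref{lemma:simple_bipartite}, which the paper derives from Berger's refinement of Gale--Ryser. Note that the classical Erd\H{o}s--Gallai theorem, which you mention as an alternative, would require checking the inequality for every $k$; the refinement is what makes the citation painless.

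Your route is self-contained and explicit, and it gives more. The graph $C_n(S)$ is vertex-transitive. Moreover, for $d\ge 2$ the set $S$ contains $\pm 1$, so $C_n(S)$ contains the Hamiltonian cycle $0,1,\dots,n-1$ and is connected. The paper obtains this connectedness by an edge-switching argument in case (e) of Theorem~\ref{theorem:when_biregular_simple_connected_graph}, so your construction could replace that step directly.
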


\begin{lemma}\label{lemma:simple_bipartite}
    Given positive integers $n_1, n_2$ and non-negative integers $d_1, d_2$, satisfying $n_1 d_1 = n_2 d_2$, there exists a simple bipartite graph where one side has $n_1$ degree $d_1$ vertices and the other side has $n_2$ degree $d_2$ vertices, if and only if $n_1 \geq d_2$, or equivalently $n_2 \geq d_1$.
\end{lemma}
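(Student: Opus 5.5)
Necessity is a counting argument, and sufficiency is an explicit "cyclic" construction, so I would not need the full Gale--Ryser machinery.

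\emph{Reductions and necessity.} If $d_1=0$ then $n_2 d_2 = n_1 d_1 = 0$ forces $d_2=0$. The empty graph works, and both conditions hold trivially. So I will assume $d_1,d_2>0$. First I check that the two conditions are equivalent. Since $n_1 d_1 = n_2 d_2$, we have
\[
n_2 \ge d_1 \iff n_1 n_2 \ge n_1 d_1 = n_2 d_2 \iff n_1 \ge d_2 .
\]
For necessity, consider any vertex on the first side. In a simple bipartite graph it has $d_1$ distinct neighbours, all on the second side. Hence $n_2 \ge d_1$.

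\emph{Sufficiency by construction.} Let $U=\{u_0,\dots,u_{n_1-1}\}$ be the degree-$d_1$ side and $W=\{w_0,\dots,w_{n_2-1}\}$ the degree-$d_2$ side, with indices of $W$ taken mod $n_2$. I would join $u_a$ to the vertices $w_{(a d_1 + t) \bmod n_2}$ for $t=0,\dots,d_1-1$. Equivalently, write out the integers $0,1,\dots,n_1 d_1 - 1$, cut them into $n_1$ consecutive blocks of length $d_1$, and assign block $a$ to $u_a$. Each integer $k$ then contributes the edge $u_{\lfloor k/d_1\rfloor}\, w_{k \bmod n_2}$.

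Then I verify three properties:
\begin{enumerate}
\item[(i)] \emph{No multiple edges.} The $d_1$ residues $a d_1, a d_1+1, \dots, a d_1 + d_1 - 1$ are consecutive integers, and $d_1 \le n_2$, so they are pairwise distinct mod $n_2$. Hence $u_a$ has exactly $d_1$ distinct neighbours.
\item[(ii)] \emph{Degrees on $W$.} The integers $0,\dots,n_1 d_1 - 1$ form $n_1 d_1/n_2 = d_2$ complete runs of residues mod $n_2$. So each residue class, and hence each $w_j$, is hit exactly $d_2$ times. By (i), these $d_2$ hits come from distinct $u_a$'s, since the hits belonging to a single $u_a$ have different residues.
\item[(iii)] \emph{Simple bipartite.} Edges only go between $U$ and $W$, so there are no loops.
\end{enumerate}

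The only point needing care is (ii): the integrality of $d_2 = n_1 d_1 / n_2$ combined with (i) is what guarantees every vertex of $W$ gets exactly $d_2$ \emph{distinct} neighbours. Once the construction is written in this edge-numbering form, this is routine. As an alternative I could invoke the Gale--Ryser conditions directly. For constant degree sequences they reduce to $\sum_{i\le k} d_1 \le \sum_{j} \min(d_2,k)$ for all $k$, and that inequality is exactly $n_1 \ge d_2$ at the critical values of $k$. I expect the explicit construction to be cleaner.
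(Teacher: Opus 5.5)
Your proof is correct, but it takes a different route from the paper.

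The paper constructs nothing. It assumes without loss of generality $d_1\le d_2$ (so $n_1\ge n_2$) and pads the degree-$d_2$ side with $n_1-n_2$ isolated vertices so that both sides have $n_1$ vertices. It then applies Berger's refinement of the Gale--Ryser theorem (Proposition~\ref{proposition:bipartite_realization}). Because the left degree sequence is constant, only $k=n_1$ needs checking, and $n_2\min(d_2,n_1)\ge n_1d_1=n_2d_2$ is equivalent to $n_1\ge d_2$. This gives both directions at once.

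You instead split the two directions. Necessity comes from the trivial neighbour count. Sufficiency comes from the explicit cyclic construction $u_{\lfloor k/d_1\rfloor}\,w_{k \bmod n_2}$: the bound $d_1\le n_2$ rules out repeated edges at each $u_a$, and $n_2 \mid n_1 d_1$ gives every $w_j$ exactly $d_2$ hits from distinct $u_a$'s.

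What each route buys:
\begin{itemize}
\item Your argument is elementary and self-contained. It produces an explicit realizing graph, which is handy for writing down the $\rho$-minimizers in Theorem~\ref{theorem:when_biregular_simple_graph} concretely.
\item You also handle the degenerate case $d_1=d_2=0$ explicitly, which the paper leaves implicit.
\item The paper's route is shorter, given the cited black box.
\item It also parallels the paper's proof of Lemma~\ref{lemma:simple_connected_regular} via the Tripathi--Vijay form of Erd\H{o}s--Gallai, so both realization lemmas come from the same kind of degree-sequence criterion. With your approach, that lemma would need its own construction, for example a circulant graph.
\end{itemize}
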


\noindent
The two lemmas are immediate consequences of the following:

\begin{proposition}[Tripathi and Vijay~\cite{tripathi2003note}]\label{proposition:graphic_sequence}
    The non-negative sequence of integers $a_1 \ge a_2 \ge \cdots \ge a_n$ is graphic if and only if $\sum_{i=1}^n a_i$ is even and 
    the following inequality holds for all $k\in\{1,\ldots,n-1\}$ with $a_k > a_{k+1}$ and for $k=n$:
    \begin{equation}\label{eq:erdos_gallai}
        \sum_{i=1}^k a_i \le k(k-1) + \sum_{i=k+1}^n \min(a_i,k).
    \end{equation}
\end{proposition}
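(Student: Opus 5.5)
The plan is to deduce this refinement from the classical Erd\H{o}s--Gallai theorem, which states the same characterization with inequality~\eqref{eq:erdos_gallai} required for \emph{every} $k\in\{1,\ldots,n\}$. Necessity is then immediate: a graphic sequence has even sum and satisfies~\eqref{eq:erdos_gallai} for all $k$. The usual double count proves this directly. The $k$ largest-degree vertices span at most $k(k-1)$ endpoint incidences among themselves, and each remaining vertex $i$ contributes at most $\min(a_i,k)$ edges into that set. So the real content is sufficiency. It suffices to show that if~\eqref{eq:erdos_gallai} holds at every breakpoint $k$ (where $a_k>a_{k+1}$) and at $k=n$, then it holds for all $k$.

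To this end I will define the defect
\[
L(k)=\sum_{i=1}^k a_i - k(k-1) - \sum_{i=k+1}^n \min(a_i,k), \qquad 0\le k\le n,
\]
with $L(0)=0$. The hypothesis is $L(k)\le 0$ at breakpoints and at $k=n$, and the goal is $L\le 0$ everywhere. Partition $\{1,\ldots,n\}$ into maximal blocks $\{s+1,\ldots,t\}$ of indices with $a_{s+1}=\cdots=a_t=a$. The right endpoint $t$ of each block is either a breakpoint or equals $n$, so $L(t)\le 0$. The left endpoint satisfies $s=0$ or $s$ is itself a breakpoint, so $L(s)\le 0$ as well. It therefore suffices to show that, on each block, $L$ attains its maximum over $\{s,\ldots,t\}$ at one of the two endpoints.

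For this I will compute the forward difference for $s\le k<t$:
\[
L(k+1)-L(k)= a+\min(a,k)-2k-c(k), \qquad c(k)=\#\{i>k+1 : a_i\ge k+1\}.
\]
The aim is to show this difference is nonincreasing in $k$ within the block. That makes $L$ discretely concave there, so its maximum sits at $s$ or $t$. The term $a+\min(a,k)-2k$ drops by $1$ or $2$ at each step. Meanwhile $c(k)-c(k+1)$ counts the index $k+2$ when $a_{k+2}\ge k+1$, plus the indices $i>k+2$ with $a_i=k+1$. When $a\le k$ or $a\ge k+2$, these counts are small enough that the second difference is $\le 0$.

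The main obstacle is the threshold regime $a\approx k+1$, where many indices with $a_i=k+1$ can make $c$ jump and break concavity. Here I would argue directly instead of via second differences. If $a\le k$, then every later term satisfies $\min(a_i,k)=a_i$. Hence $L(k)=\sum_{i\le k}a_i-k(k-1)-\sum_{i>k}a_i$, which is a concave quadratic in $k$ with linear part $2a k$. Comparing it with the value at $t$ (or with $k=n$) then closes this case. If $a\ge k+1$, every entry up to $t$ is at least $k+1$, and the difference $L(k+1)-L(k)\ge a+k-2k-c(k)$ can be compared against the value at $s$. The idea is that the sign of the difference changes at most once on the block, from $+$ to $-$, so the block maximum is still at an endpoint. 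This unimodality claim is the single property I would verify carefully. Once it holds, the reduction to Erd\H{o}s--Gallai finishes the proof.
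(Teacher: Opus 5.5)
The paper gives no proof of this proposition; it only cites Tripathi and Vijay. Your overall plan is the natural one: reduce to the classical Erd\H{o}s--Gallai theorem by showing that $L(s)\le 0$ and $L(t)\le 0$ at the ends of each block of equal values force $L\le 0$ inside the block. Your necessity argument and your formula for $L(k+1)-L(k)$ are correct. The gap is in the one step that carries the content, and it fails in three ways.

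First, the inference is backwards. Nonincreasing differences make $L$ concave, and a concave function generally attains its maximum in the interior, e.g.\ $-(k-2)^2$ on $\{0,\ldots,4\}$. What places the maximum at an endpoint is convexity or monotonicity. Likewise, differences changing sign from $+$ to $-$ mean $L$ rises and then falls, which gives an interior maximum. So the unimodality claim you single out would prove the opposite of what you need.

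Second, the concavity claim is false anyway. For $s\le k$, $k+2\le t$, the second difference is $-2+2[a\ge k+1]+\#\{i\ge k+3: a_i=k+1\}$, where $[\cdot]$ is the indicator. This equals $-2$ for $k\ge a$, but it is $\ge 0$ for $k<a$. For instance, the sequence $5,5,5$ followed by seven $1$'s gives $L(0),L(1),L(2)=0,-4,-1$, which is not concave.

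Third, in the regime $a\le k$, comparing with $L(t)$ goes the wrong way, since $L$ is nonincreasing there.

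The fix is to compute the differences explicitly in the two regimes, for $s\le k<t$.

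\textbf{Case $k\ge a$.} Every $a_i$ with $i\ge k+2$ satisfies $a_i\le a\le k$, so $c(k)=0$ and $L(k+1)-L(k)=2(a-k)\le 0$.

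\textbf{Case $k<a$.} The indices $k+2,\ldots,t$ all contribute to $c(k)$, which gives
\[
L(k+1)-L(k)=a-t+1-\#\{i>t : a_i\ge k+1\}.
\]
This is nondecreasing in $k$.

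\textbf{Conclusion.} If $a<t$, both expressions are $\le 0$ throughout the block. Hence $L$ is nonincreasing there and $L(k)\le L(s)\le 0$. If $a\ge t$, every $k\in\{s,\ldots,t-1\}$ lies in the second regime. Hence $L$ is convex on the block and $L(k)\le\max\{L(s),L(t)\}\le 0$. With this step in place, your reduction to the classical Erd\H{o}s--Gallai theorem goes through.
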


\begin{proposition}[Theorem 2 from Berger~\cite{berger2014note}]\label{proposition:bipartite_realization}
    Given non-negative integers $a_1 \ge a_2 \ge \cdots \ge a_n$ and $b_1, b_2, \ldots, b_n$, satisfying $\sum_{i=1}^n a_i = \sum_{i=1}^n b_i$,
    there exists a simple bipartite graph with $n$ vertices on each side, where the left degrees are 
    $\{a_i\}$ and the right degrees are $\{b_i\}$, if and only if the following inequality holds for all $k\in\{1,\ldots,n-1\}$ with $a_k > a_{k+1}$ and for $k=n$:
    \begin{equation}\label{eq:berger}
        \sum_{i=1}^n \min(b_i,k) \ge \sum_{i=1}^k a_i.
    \end{equation}
\end{proposition}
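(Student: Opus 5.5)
The statement immediately above is Proposition~\ref{proposition:bipartite_realization}, a cited result of Berger; in the paper it is taken as given. If I were to prove it myself, I would derive it from the classical Gale--Ryser theorem. Gale--Ryser says that, for $a_1\ge\cdots\ge a_n$ and $b$ with equal sums, a bipartite realization exists if and only if
\[
\sum_{i=1}^k a_i \le \sum_{i=1}^n \min(b_i,k)
\]
holds for every $k\in\{1,\ldots,n\}$. The claim to establish is that it suffices to check \eqref{eq:berger} only at the indices $k$ where $a_k>a_{k+1}$, together with $k=n$. Necessity is immediate from Gale--Ryser, or directly: the first $k$ left vertices send at most $\min(b_i,k)$ edges to right vertex $i$. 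So the work lies in sufficiency.

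\textbf{Sufficiency via concavity.} Define
\[
g(k)=\sum_{i=1}^n \min(b_i,k)-\sum_{i=1}^k a_i ,\qquad g(0)=0 .
\]
Its increment is
\[
g(k)-g(k-1)=\#\{i: b_i\ge k\}-a_k .
\]
The count $\#\{i:b_i\ge k\}$ is nonincreasing in $k$. Consider a maximal block of indices $s<k\le t$ on which $a_k$ is constant; here $s$ is $0$ or a breakpoint, and $t$ is a breakpoint or $n$. On this block the increments are nonincreasing, so $g$ is concave there. A concave function on an integer interval attains its minimum at an endpoint. Hence $g(k)\ge\min(g(s),g(t))$ for every $k$ in the block. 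Both $g(s)$ and $g(t)$ are nonnegative, by hypothesis or because $g(0)=0$. It follows that $g\ge 0$ everywhere, and Gale--Ryser yields the realization.

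\textbf{Main obstacle.} The only subtle point is the block endpoints: the first block starts at $s=0$, where $g(0)=0$ holds trivially, and the last block ends at $k=n$, which the hypothesis checks explicitly. Once these are handled, the argument is just the concavity observation above.
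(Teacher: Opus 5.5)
Your proof is correct, and it supplies an argument where the paper gives none: the paper imports the statement from Berger's note as a black box. You instead derive the refined criterion from the classical Gale--Ryser theorem. The key observation is that $g(k)=\sum_{i}\min(b_i,k)-\sum_{i\le k}a_i$ has increments $\#\{i:b_i\ge k\}-a_k$. These increments are nonincreasing on every maximal run of equal $a_k$'s, so $g$ is discretely concave on each run. Its minimum there is therefore attained at a run endpoint, which is $0$, a jump index, or $n$.

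The citation keeps the paper short and defers to the literature. Your route makes the dependence explicit and shows that the ``check only at jumps'' refinement is an elementary corollary of Gale--Ryser, so citing Gale--Ryser alone would have sufficed.

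The refinement is used only in the proof of Lemma~\ref{lemma:simple_bipartite}. There all $a_i$ equal $d_1$, so there is a single run $\{0,\dots,n\}$. In that case your concavity observation collapses the whole Gale--Ryser system to the one inequality at $k=n$ that the paper actually checks.
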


\begin{proof}[Proof of Lemma~\ref{lemma:simple_connected_regular}]
    Given non-negative integers $n,d$, by Proposition~\ref{proposition:graphic_sequence}, there exists a $d$-regular simple graph with $n$ vertices, 
    if and only if $dn$ is even and \eqref{eq:erdos_gallai} holds for $k=n$, namely that $n d \le n(n-1)$.
\end{proof}

\begin{proof}[Proof of Lemma~\ref{lemma:simple_bipartite}]
    Given $n_1, d_1, n_2, d_2 > 0$ such that $n_1 d_1 = n_2 d_2$, 
    assume without loss of generality that $d_1 \le d_2$, which implies $n_1 \ge n_2$. 
    To show the existence of a simple bipartite bi-regular graph, we invoke Proposition~\ref{proposition:bipartite_realization} with: $n=n_1$; $a_i=d_1$ for all $i$; and $b_i$ being $d_2$ for $i \leq n_2$ and zero otherwise. 
    By the proposition, it suffices to check that \eqref{eq:berger} holds for $k = n_1$, namely that $n_2 \min(d_2,n_1) \ge d_1 n_1$, which simplifies to the lemma's condition $n_1 \geq d_2$.    
\end{proof}

\subsection{Existence of Simple Graphs}

\WhenBiregularSimpleGraphTheorem*

\begin{proof}
    Let $d_1, n_1, d_2, n_2$ be integers satisfying the conditions of the theorem.
    For a graph $G \in \G_{d_1,n_1,d_2,n_2}$, let $V_i$ denote the set of degree $d_i$ vertices for $i=1,2$.
    Then by Theorem~\ref{theorem:Mn1d1n2d2}, we have $\rho(G) = \rho(\M_{d_1,n_1,d_2,n_2})$ if and only if $G[V_1]$ is empty and $G[V_2]$ is regular. 
    This is equivalent to $G$ being the union of two graphs: 
    \begin{enumerate}
        \item $G_1$, which is a regular graph on $V_2$ with degree $d'_2 = (n_2 d_2 - n_1 d_1)/n_2$,
        \item $G_2$, which is a bipartite bi-regular graph, with sides $V_1, V_2$ and respective degrees of $d_1$ and $n_1 d_1 / n_2$.
    \end{enumerate}
    Necessary and sufficient conditions for the existence of these simple graphs $G_1, G_2$, on top of the condition that degrees are integral, which is theorem condition (ii), are:
    \begin{description}
        \item[$G_1$ - ] by Lemma~\ref{lemma:simple_connected_regular}, the conditions are that $d'_2 n_2$ is even and that $d'_2 \le n_2-1$, equivalent to theorem conditions (i) and (iv).
        \item[$G_2$ -] by Lemma~\ref{lemma:simple_bipartite}, the condition is $n_2 \ge d_1$, which is theorem condition (iii).
    \end{description}
\end{proof}

\NuSimpleGraphTheorem*
\begin{proof}
    Given $d_1,d_2$ Theorem~\ref{theorem:when_biregular_simple_graph} gives necessary and sufficient conditions for $\rho(\G_{d_1,n_1,d_2,n_2})=\rho(\M_{d_1,n_1,d_2,n_2})$ to hold for two positive integers $n_1,n_2$ such that $n_1 d_1 \le n_2 d_2$. 
    Let $k$ be an integer such that $k\nu$ is integral. Then, the requirement that $n_1=(1-\nu)n$ and $n_2=\nu n$ be integral holds if $n \in k\N$.
    Theorem~\ref{theorem:when_biregular_simple_graph} condition (ii) requires that $n_2$ divides $n_1 d_1$, which is equivalent to
    \[
        \frac{n_1 d_1}{n_2} = i \in \{1,\ldots,d_2\}.
    \]
    Rewriting yields $\frac{1-\nu}{\nu} d_1 = i$, or $\nu = \frac{d_1}{d_1+i}$.
    The other conditions of the theorem are as follows:
    \begin{description}
        \item[(i)] $n_2 d_2 + n_1 d_1 = (\nu d_2 + (1-\nu) d_1) n$, is an even integer if $n \in 2 k \N$,
        \item[(iii)] $n_2 \ge d_1$, which holds for a sufficiently large $n$,
        \item[(iv)] $n_2(n_2-1) \ge n_2 d_2 - n_1 d_1$, holding as well for a sufficiently large $n$.
    \end{description}
    It remains to consider the case where $n_1 d_1 > n_2 d_2$, in which Theorem~\ref{theorem:when_biregular_simple_graph} applies by exchanging $n_1,d_1$ with $n_2, d_2$. Condition (ii) requires that $n_2 d_2 / n_1$ to be some integer $i$, where $i \in \{1,\ldots,d_1-1\}$. 
    This is equivalent to $\nu d_2 = (1-\nu) i$, and isolating for $\nu$ we get $\nu = \frac{i}{d_2+i}$. The other conditions of Theorem~\ref{theorem:when_biregular_simple_graph} are analyzed in a similar manner and hold for a sufficiently large $n \in 2k\N$.
\end{proof}

We illustrate Theorem~\ref{theorem:when_biregular_simple_graph} by Figure~\ref{fig:GraphBounds23} for the 2-3 bi-regular case, and Figure~\ref{fig:GraphBounds34} for the 3-4 bi-regular case. 

\begin{figure}[h!]
    \centering
    \includegraphics[width=0.8\textwidth]{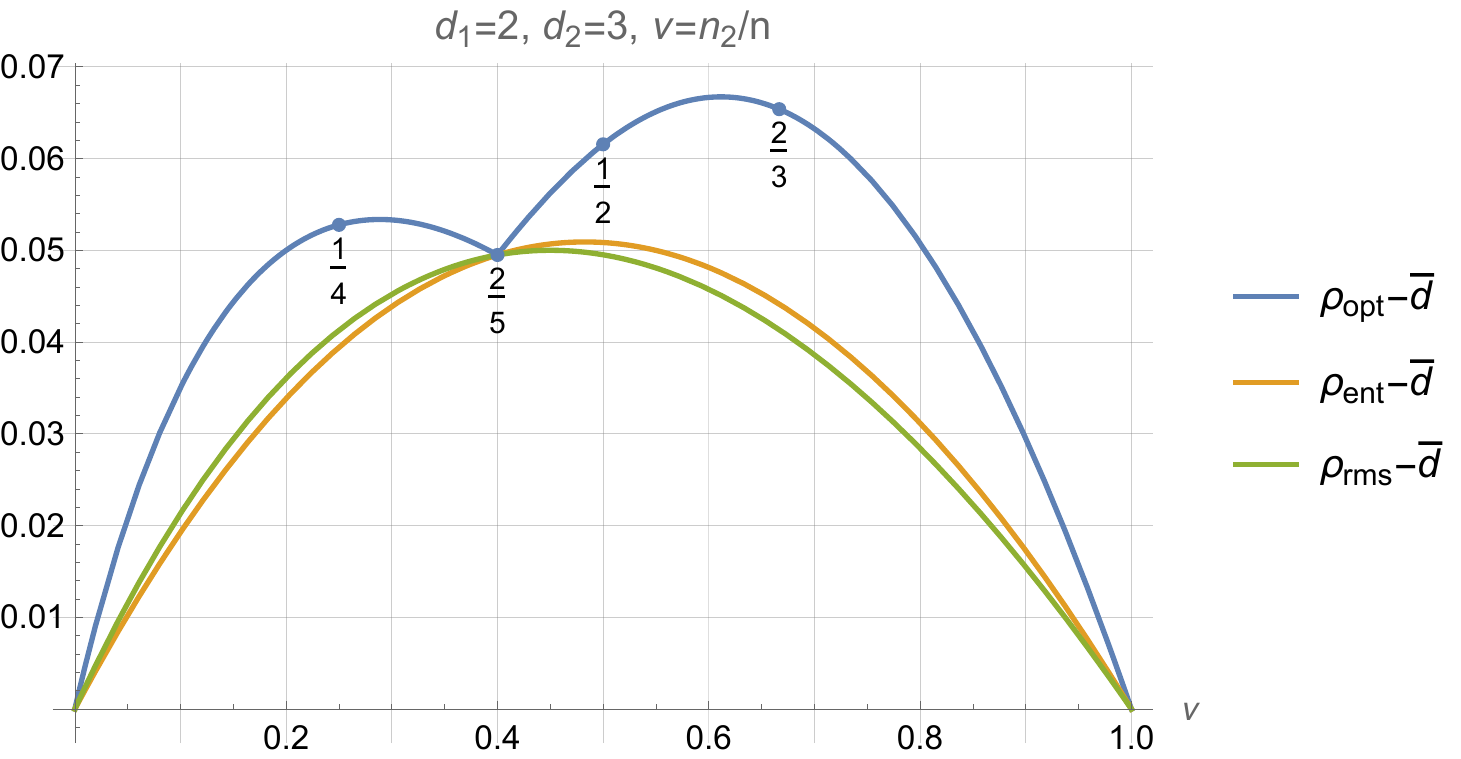}
    \includegraphics[width=1.0\textwidth]{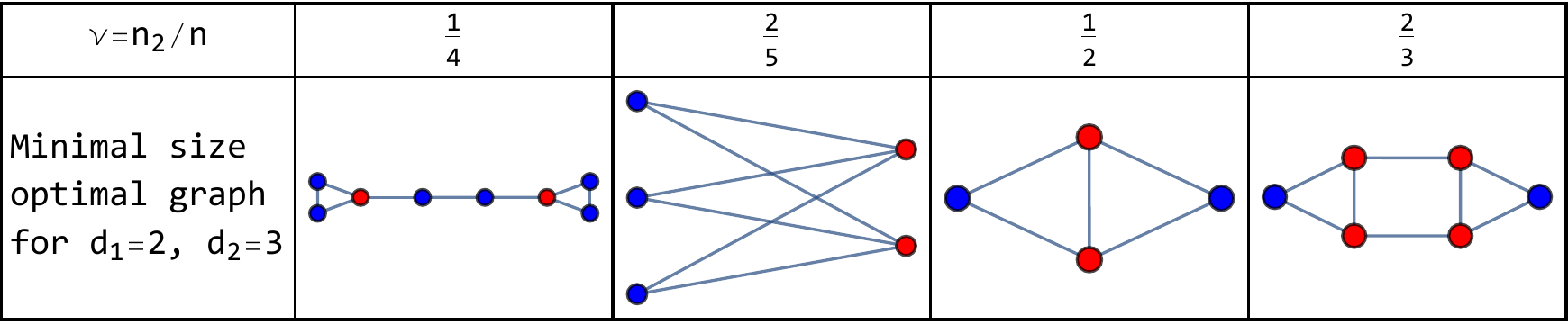}
    \caption{
        Comparing the three lower bounds for bi-regular graphs with $d_1=2, d_2=3$, when varying $\nu=n_2/(n_1+n_2)$.
        To facilitate comparison, the average degree baseline $\overline{d}$ is subtracted.
        The plots are: $\rho(\M_{d_1,n_1,d_2,n_2})-\overline{d}$ in blue, $\rhoent(d_1,n_1,d_2,n_2)-\overline{d}$ in orange and $\rhorms(d_1,n_1,d_2,n_2)-\overline{d}$ in green,
        where $\rhoent$ and $\rhorms$ are the entropy and root mean square lower bounds defined by Propositions~\ref{proposition:entropy_lb},~\ref{prop:deg_rms_lb}
        Additionally, we mark the $\nu$ values $\largeNu_{2,3}=\{\frac{1}{4}, \frac{2}{5}, \frac{1}{2}, \frac{2}{3}\}$ defined by \eqref{eq:nu_set} and give a minimal size graph with $\rho(G)=\rho(\M_{d_1,n_1,d_2,n_2})$, for each one.
    }
    \label{fig:GraphBounds23}
\end{figure}

\begin{figure}[h!]
    \centering
    \includegraphics[width=0.8\textwidth]{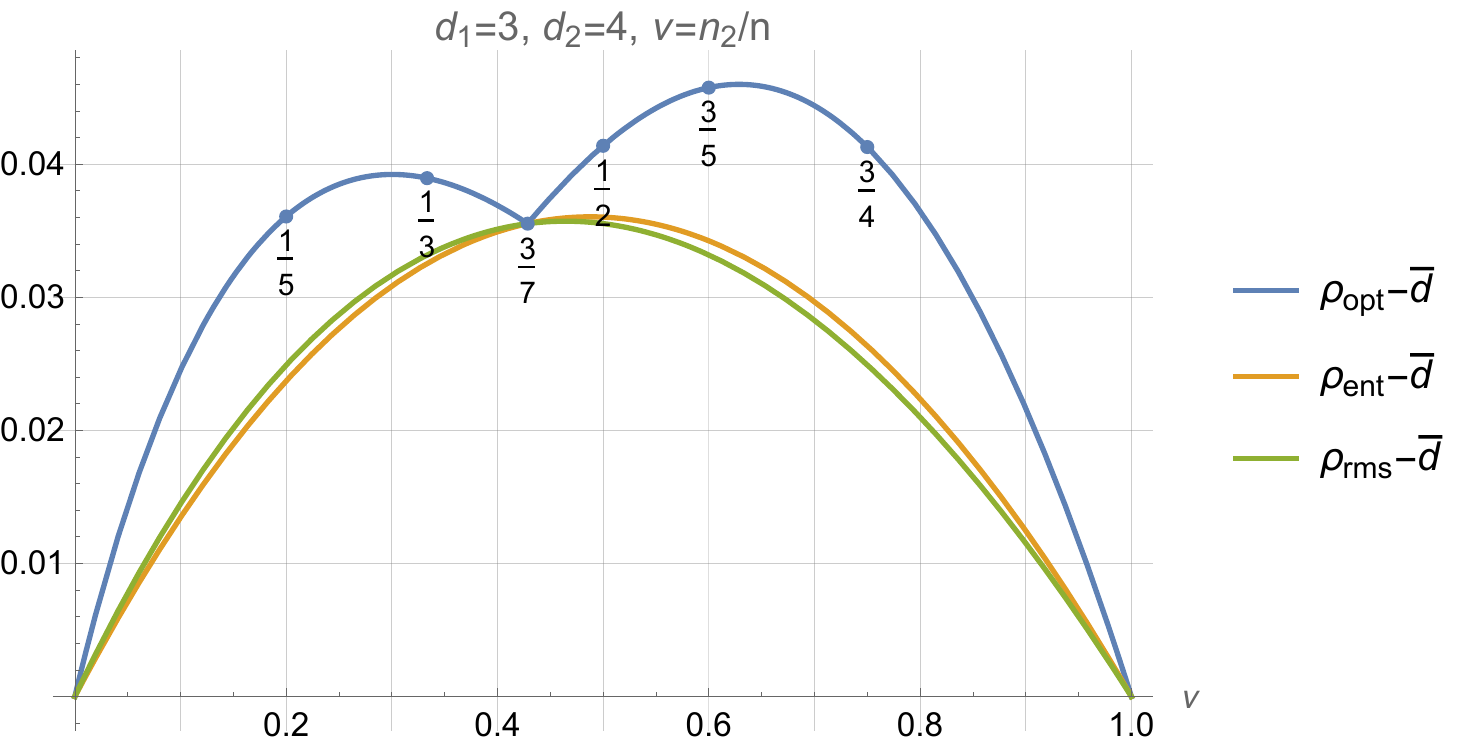}
    \includegraphics[width=1.0\textwidth]{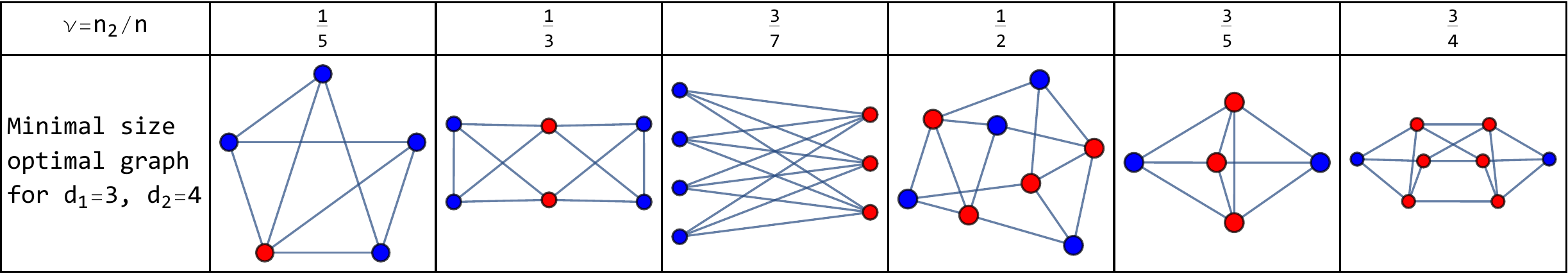}
    \caption{Similar to Figure~\ref{fig:GraphBounds23} with $d_1=3, d_2=4$ and $\largeNu_{3,4}=\{\frac{1}{5}, \frac{1}{3}, \frac{3}{7}, \frac{1}{2}, \frac{3}{5}, \frac{3}{4}\}$.}
    \label{fig:GraphBounds34}
\end{figure}

\subsection{Connectedness Constraints}

\begin{theorem}\label{theorem:when_biregular_simple_connected_graph}
    Let $n_1, n_2$ be positive integers and $d_1, d_2$ be non-negative integers such that $d_1 \neq d_2$ and $n_2 d_2 \ge n_1 d_1$,
    then a necessary and sufficient condition for the existence of a connected graph in $\G_{d_1,n_1,d_2,n_2}$ with $\rho(G)=\rho(\M_{d_1,n_1,d_2,n_2})$, is that, in addition to the four conditions imposed by Theorem~\ref{theorem:when_biregular_simple_graph}, at least one of the following conditions hold:
    \begin{enumerate}[label=(\alph*)]
        \item $d_1, d_2 \ge 2$,
        \item $n_1 = d_2 = 1$, $n_2=d_1 > 1$: $G$ is a star with the central vertex in $V_1$,
        \item $n_2 = d_1 = 1$, $n_1 = d_2 > 1$: $G$ is a star with the central vertex in $V_2$,
        \item $d_1 = 1$, $n_2 = 2$, $d_2 > 1$, $n_1 = 2(d_2-1)$: $G$ is a double star with the two centers in $V_2$,
        \item $d_1 = 1$, $d_2 > 1$ and $n_2 d_2 - n_1 d_1 \ge 2 n_2$: the induced graph $G[V_2]$ is a regular graph of degree at least two, where each $V_2$ vertex is connected to $n_1/n_2$ leaves in $V_1$. 
    \end{enumerate}
\end{theorem}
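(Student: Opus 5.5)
By Theorem~\ref{theorem:Mn1d1n2d2} (via the proof of Theorem~\ref{theorem:when_biregular_simple_graph}), a graph $G\in\G_{d_1,n_1,d_2,n_2}$ is $\rho$-minimizing if and only if $G=G_1\cup G_2$. Here $G_1$ is a $d_2'$-regular graph on $V_2$ with $d_2'=(n_2d_2-n_1d_1)/n_2$, and $G_2$ is a bipartite bi-regular graph between $V_1$ and $V_2$ with degrees $d_1$ on $V_1$ and $c:=n_1d_1/n_2$ on $V_2$. The plan is to decide, by cases on $d_1$ and $d_2'$, when such a decomposition can be chosen connected.

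\textbf{Degenerate cases.} If $d_1=0$, the $V_1$ vertices are isolated and $G$ is disconnected, since $n_1\ge1$ and $n\ge2$. So none of (a)--(e) may apply: indeed $d_1=0$ fails (a), and (b)--(e) all require $d_1\ge1$. If $d_2=1$ then $n_2d_2\ge n_1d_1$ forces $c\le1$, so every component of $G$ has at most one edge. Connectedness then forces $n=2$ with $n_1=n_2=1$, which contradicts $d_1\ne d_2$ unless we are in case (b) with a star. I will check (b) and (c) directly, noting that (c) arises when $d_1=1$, $n_2=1$.

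\textbf{Case $d_1=1$, $d_2>1$.} Every $V_1$ vertex is a leaf attached to $V_2$, so $G$ is connected iff $G_1$ is connected on $V_2$. Since $G_1$ is $d_2'$-regular, there are three subcases.
\begin{itemize}
\item $d_2'=0$: we need $n_2=1$, which gives case (c).
\item $d_2'=1$: $G_1$ is a perfect matching, so we need $n_2=2$. Then $n_1=2(d_2-1)$, which is case (d), the double star.
\item $d_2'\ge2$, i.e.\ $n_2d_2-n_1d_1\ge2n_2$: this is case (e). I must show a \emph{connected} $d_2'$-regular graph exists on $n_2$ vertices whenever $n_2d_2'$ is even and $d_2'\le n_2-1$. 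A circulant (Harary-type) graph connecting each vertex to its $\lfloor d_2'/2\rfloor$ nearest neighbors on each side, plus antipodes if $d_2'$ is odd, is connected and does this.
\end{itemize}
The symmetric situation $d_2=1<d_1$ gives star (b) by the same reasoning.

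\textbf{Case (a), $d_1,d_2\ge2$.} This is the main obstacle: we must show the four conditions of Theorem~\ref{theorem:when_biregular_simple_graph} already suffice for a connected realization. First I would take an arbitrary realization $G=G_1\cup G_2$ with the fewest components. Suppose $G$ is disconnected. Each component contains a cycle, because every degree is at least $2$. I would then perform a degree-preserving 2-switch: pick edges $ab$ in one component and $a'b'$ in another, of the same \emph{type}. Same type means both lie in $G_1$ (inside $V_2$), or both lie in $G_2$ with $a,a'\in V_1$ and $b,b'\in V_2$. Choose them to be non-bridge edges, which exist on cycles. Replacing them by $ab'$ and $a'b$ preserves the decomposition type, and hence $\rho$-minimality, and merges the two components, contradicting minimality.

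The subtle point is ensuring a same-type non-bridge edge exists in both components. Every component meets $V_2$, because $V_1$ is independent. If $c\ge1$, every component containing a $V_2$ vertex has $G_2$-edges. A $G_2$-edge lying on a cycle needs care; if all $G_2$-edges in a component are bridges, I would instead switch $G_1$-edges, which exist when $d_2'\ge1$. The case $d_2'=0$ is a bipartite graph with min degree $\ge2$, where every edge lies on a cycle or a switch still merges. I expect this case bookkeeping to be the delicate part of the proof.
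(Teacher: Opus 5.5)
\textbf{Two steps fail: the central case (a) is left open, and the $d_2=1$ paragraph is wrong as written.} Otherwise your route is the paper's: the same case split on $d_1,d_2$, the same reduction to $G=G_1\cup G_2$, and the same type-preserving 2-switch in case (a). Your explicit circulant graph in case (e) is a cleaner substitute for the paper's ``switch edges of $G[V_2]$ if necessary''.

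\textbf{The $d_2=1$ paragraph.} Components need not have at most one edge, and connectedness does not force $n=2$. Since $d_1\ge 1$ and $c=n_1d_1/n_2$ is an integer with $0<c\le 1$, you get $c=1$ and $d_2'=0$. So $G$ is a disjoint union of $n_1$ stars $K_{1,d_1}$ centred in $V_1$, and it is connected iff $n_1=1$, which is exactly case (b). Your later remark ``by the same reasoning'' is the correct argument; the earlier text should be replaced by it.

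\textbf{Case (a) is not finished.} You require both switched edges to be non-bridges, but you never show that two components always contain non-bridge edges of a common type.
\begin{enumerate}
\item The fallback ``$G_1$-edges exist when $d_2'\ge 1$'' gives only existence. It does not show those edges lie on cycles in the \emph{other} component.
\item For $d_2'=0$, the claim that every edge lies on a cycle is false in general. A $(2,3)$-bi-regular bipartite graph can have a bridge, and such graphs are $\rho$-minimizing, e.g.\ with $n_1=9$, $n_2=6$. What is true is only that each component contains some cycle.
\end{enumerate}

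\textbf{The missing observation.} A 2-switch of $u_1v_1\in K_1$ and $u_2v_2\in K_2$ already merges $K_1$ and $K_2$ if \emph{one} of the two edges lies on a cycle. If $u_2v_2$ is on a cycle, then $K_2-u_2v_2$ is connected, and each of the at most two pieces of $K_1-u_1v_1$ is attached to it by a new edge ($u_1v_2$ or $u_2v_1$). The paper uses exactly this. After merging components whose cycles have the same type, it handles the remaining mixed situation by switching an arbitrary bipartite edge of one component (such edges exist since $c>0$) with a bipartite edge on a cycle of the other.

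\textbf{Alternative fix keeping your version.} You can keep your both-non-bridge version by splitting on $d_2'$.
\begin{enumerate}
\item If $d_2'\ge 2$: every component of $G$ meets $V_2$, since $V_1$ is independent and $d_1\ge 2$. So it contains a whole component of the $d_2'$-regular graph $G_1$, which has a cycle. Hence every component has a $G_1$-edge lying on a cycle.
\item If $d_2'\le 1$: $G_1$ is a matching. So every cycle of $G$ uses a bipartite edge, and each component has a cycle because the minimum degree is at least $2$.
\end{enumerate}
Either way, any two components contain edges of a common type lying on cycles, and your fewest-components switch argument goes through.
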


\begin{proof}
    Given $n_1, n_2, d_1$ and $d_2$ satisfying the requirements of Theorem~\ref{theorem:when_biregular_simple_graph}, let $\rho_0 = \rho(\M_{d_1,n_1,d_2,n_2})$, and let $G$ be a simple graph in $\G_{d_1,n_1,d_2,n_2}$ such that $\rho(G) = \rho_0$. 
    As connectedness implies that $d_1$ and $d_2$ are non-zero, we consider separately three cases: $d_1,d_2\ge 2$; $d_2 = 1 < d_1$; and $d_1 = 1 < d_2$.

    \begin{claim}
        The theorem holds if $d_1,d_2\ge 2$, which is case (a) of the theorem.
    \end{claim}
    \begin{claimproof}
        If $G$ is connected, we are done. Otherwise, we show that $G$ can be made connected via edge switching, while maintaining the condition $\rho(G) = \rho_0$. 
        As the minimal degree is at least two, every connected component has a cycle, which either (i) has a bipartite edge, or (ii) lies entirely in $G[V_2]$.
        If there are cycles $C_1, C_2$ in distinct components, that are of the same type, (i) or (ii), 
        we choose $e_1=(u_1,v_1) \in C_1$ and $e_2=(u_2,v_2) \in C_2$ such that $v_1,v_2 \in V_2$ and $u_1,u_2 \in V_i$ for the same $i\in\{1,2\}$.
        Let $G'$ be the graph obtained by switching the edges, $G'=G \setminus \{e_1,e_2\} \cup \{(u_1,v_2),(u_2,v_1)\}$. 
        Then $G'$ is a graph in $\G_{d_1,n_1,d_2,n_2}$ such that $G'[V_1]$ is the empty graph and $G'[V_2]$ is a regular graph, so that $\rho(G') = \rho(G)$,
        and where $G'$ has fewer connected components than $G$. 
        Repeating this process either produces a connected graph, or else, it produces a graph with two connected components, so that all cycles in the first are of type (i), while all cycles in the second are of type (ii).
        
        Let $C_1$ and $C_2$ be two cycles in distinct components, $K_1, K_2$ respectively, where $C_1$ is in $G[V_2]$, and $C_2$ has a bipartite edge. Since any $V_2$ vertex has the same number of $V_1$ neighbors, $n_1 d_1 / n_2 > 0$, the component $K_1$ must have a bipartite edge, and we can choose edge $e_1=(u_1,v_1) \in K_1$ and $e_2=(u_2,v_2) \in C_2$ with $u_1,u_2 \in V_1$ and $v_1,v_2 \in V_2$. As before, we define the switch graph $G'=G \setminus \{e_1,e_2\} \cup \{(u_1,v_2),(u_2,v_1)\}$, where we have $G' \in \G_{d_1,n_1,d_2,n_2}$ and $\rho(G') = \rho(G)$. Moreover, we claim that $G'$ is connected. 
        Indeed, in $G'$ there is a path $P$ from $u_2$ to $v_2$ using the remaining edges in the cycle $C_2$, and there is a path from $u_1$ to $v_1$ consisting of $(u_1,v_2)$, the path $P$ and $(u_2,v_1)$, concluding the proof.
    \end{claimproof}

    \begin{claim}
        The theorem holds if $d_2 = 1 < d_1$, under the conditions of case (b).        
    \end{claim}
    \begin{claimproof}
        As for the graph $G$, each $V_2$ vertex has $n_1 d_1 / n_2 > 0$ neighbors in $V_1$ and $d_2=1$, it follows that $G[V_2]$ is empty.
        As $G[V_1]$ is empty as well, the graph $G$ must be bipartite. 
        Therefore, $G$ is a collection of $n_1$ stars, each with $n_2/n_1$ leaves, which is connected if and only if $n_1=1$.
    \end{claimproof}

    \begin{claim}
        The theorem holds if $d_1 = 1 < d_2$, under the conditions of cases (c), (d) or (e).                
    \end{claim}
    \begin{claimproof}
         The degree of the regular graph $G[V_2]$ is
         \[
            d'_2 = d_2 - \frac{d_1 n_1}{n_2} = d_2 - \frac{n_1}{n_2}.
        \]
        If $d'_2=0$ then, as the $V_1$ vertices are leaves that do not contribute to connectivity, we need $n_2=1$ to be connected, which is case (c) of the star graph.
        Otherwise, if $d'_2=1$ then we need $n_2=2$ to be connected, which is case (d) of the double star graph.
        Otherwise, $d'_2 \ge 2$, and by switching edges of $G[V_2]$ if necessary we can make $G$ connected, which is case (e).
    \end{claimproof}
\end{proof}

\section{Counting Simple Graphs with Optimal Spectral Radius}\label{section:num_edges_simple_graph}

This section proves Theorem~\ref{theorem:num_edges_simple_graph}, bounding the function $E(n)$. 
Recall that given the number of vertices is $n$, this function counts the number of edge values $e$ such that $\rho(\G_{n,e}) = \rho(\M_{n,e})$ and $n$ does not divide $2e$. Note that if $n$ does divide $2e$, then the spectral minimizers are regular graphs, which we know well.

\NumEdgesSimpleGraphTheorem*

The proof is a consequence of an exact formula for $E(n)$. However, before stating the formula, we establish the required number-theoretic functions. 
Let $\phi(n)$ denote Euler's totient function. Let $\chi_4$ denote the non-principal Dirichlet character modulo 4, defined strictly for odd integers by its congruence class:
\begin{equation}
    \chi_4(n) = \begin{cases} 
        1 & \text{if } n \equiv 1 \pmod 4 \\ 
        -1 & \text{if } n \equiv 3 \pmod 4 \\
        0 & \text{otherwise}
    \end{cases}
\end{equation}

Recall that the Dirichlet convolution of two arithmetical functions $a(n)$ and $b(n)$ is given by $(a*b)(n) = \sum_{d|n} a(d)b(n/d)$. 
For odd $n$, we let $f(n)$ denote the Dirichlet convolution of $\chi_4$ and $\phi$, defined as:
\begin{equation}
    f(n) = (\chi_4 * \phi)(n) = \sum_{d|n} \chi_4(d)\phi\left(\frac{n}{d}\right),
\end{equation}
and we let
\begin{equation}
    P(n) = \sum_{k=1}^n \gcd(n,k)
\end{equation}
denote Pillai's arithmetical function. 

Then, $E(n)$ is given by the following exact formula:
\begin{theorem}\label{theorem:exact_En}
    For any integer $n \ge 3$, the number of distinct values of $e$ such that $n$ does not divide $2e$ and $\rho(\mathcal{G}_{n,e}) = \rho(\mathcal{M}_{n,e})$ is  
    \[
    E(n) = \begin{cases}
        2P(n/2) - \frac{n}{2} - 1 & \text{if } n \text{ is even} \\
        \frac{3}{4}P(n) + \frac{n - f(n) - 8 + \chi_4(n)}{4} & \text{if } n \text{ is odd}
    \end{cases}
    \]
\end{theorem}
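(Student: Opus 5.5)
The plan is to turn the characterization in Theorem~\ref{theorem:when_biregular_simple_graph} into explicit divisibility and inequality conditions on two integer parameters, and then count lattice points. Fix $n\ge 3$. An admissible $e$ (with $n\nmid 2e$) corresponds bijectively to a pair $(d,n_2)$ with $d=\lfloor 2e/n\rfloor\in\{0,\dots,n-2\}$, $1\le n_2\le n-1$ and $2e=dn+n_2$. By Theorem~\ref{theorem:Mne}, a $\rho$-minimizer in $\G_{n,e}$ must lie in $\G_{d,n_1,d+1,n_2}$ with $n_1=n-n_2$. So $E(n)$ counts the pairs $(d,n_2)$ with $dn+n_2$ even for which Theorem~\ref{theorem:when_biregular_simple_graph} holds in the correct orientation.

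I split into two regimes.

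\emph{Regime A}, $n_1d\le n_2(d+1)$, i.e.\ $dn\le n_2(2d+1)$. Condition (i) is just the parity of $dn+n_2$. Condition (ii), $n_2\mid n_1d$, is equivalent to $n_2\mid nd$, i.e.\ $\tfrac{n_2}{\gcd(n,n_2)}\mid d$. Condition (iii) is $d\le n_2$. Condition (iv) reads $n_2(n_2-1)\ge n_2(d+1)-n_1d$.

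\emph{Regime B}, $n_1d>n_2(d+1)$. Exchanging roles, the conditions become $\tfrac{n_1}{\gcd(n,n_1)}\mid d+1$, $n_1\ge d+1$, and $n_1(n_1-1)\ge n_1d-n_2(d+1)$.

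The case $d=0$ (matchings) is handled separately: it contributes exactly the even $n_2$, which gives the $-\tfrac n2$ and constant-type corrections.

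The first step is to show that inside each regime the inequalities (iii), (iv) and the regime boundary together pin $d$ to an interval whose length is comparable to $n_2$ (respectively $n_1$). Writing $g=\gcd(n,n_2)$ and $d=m\,n_2/g$, I expect the admissible multipliers in Regime A to be $m\in\{1,\dots,g-1\}$ or close to it: the regime condition should cut $m$ at about $g/2$ from one side, and the constraint $d\le n-2$ together with (iv) the other. Regime B, with $d+1=m\,n_1/\gcd(n,n_1)$, should give the complementary range. Since $\gcd(n,n_2)=\gcd(n,n_1)$, the two regimes for the pair $n_2$ and $n-n_2$ should combine to roughly $\gcd(n,n_2)$ solutions before parity. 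Summing over $n_2$ then produces $\sum_k\gcd(n,k)=P(n)$ up to boundary terms, such as $n_2$ with $g=1$ and the endpoint $m=g$.

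The second step is the parity constraint $dn+n_2\equiv 0\pmod 2$.
\begin{itemize}
\item For even $n$, this forces $n_2$ even. Substituting $n_2=2k$ and $\gcd(n,2k)=2\gcd(n/2,k)$ should turn the count into $2P(n/2)$ minus the $d=0$ and boundary corrections, giving $2P(n/2)-\tfrac n2-1$.
\item For odd $n$, the condition says $d\equiv n_2\pmod 2$, where $d=m\,n_2/g$. When $n_2/g$ is odd, the condition becomes $m\equiv n_2\pmod 2$, which keeps about half of the multipliers. When $n_2/g$ is even, it holds automatically. Grouping the $n_2$ by $g\mid n$ and counting residues of $n_2/g$ modulo 4 against the parity of $m$ should produce the $\tfrac34P(n)$ main term. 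The character sum $\sum_{g\mid n}\chi_4(n/g)\phi(g)=f(n)$ should capture the imbalance between classes $1$ and $3$ mod $4$, and $\chi_4(n)$ should come from the term $g=1$ or $g=n$.
\end{itemize}

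The main obstacle is the exact bookkeeping at the boundaries: the regime switch $dn=n_2(2d+1)$, the extreme multipliers $m=1$ and $m=g$, the constraint $d\le n-2$, and the $d=0$ column. Any off-by-one error there changes the constant terms. I would first establish the interval for $m$ carefully in each regime, then verify the resulting formula numerically for $n\le 20$ by direct enumeration of $(d,n_2)$ before committing to the closed form. For the odd case I would derive the mod-4 split by writing $\#\{k\le n:\gcd(n,k)=g,\ k/g\equiv 1 \pmod 4\}$ via Möbius inversion, which is where the convolution $\chi_4*\phi$ naturally arises.
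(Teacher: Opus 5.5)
Your setup (parametrizing admissible $e$ by $(d,n_2)$, splitting by orientation, and reducing (ii) to $\frac{n_2}{g}\mid d$ in Regime A and $\frac{n_1}{g}\mid d+1$ in Regime B, with $g=\gcd(n,n_2)=\gcd(n,n_1)$) matches the paper exactly. The gap is that the theorem's proof \emph{is} the per-$n_2$ count of admissible $d$. You leave that count as an expectation, and the expectation you state is wrong.

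Write $d=m\,n_2/g$. Condition (iii) gives $0\le m\le g$. The Regime A inequality $d(n-2n_2)\le n_2$ becomes $m\cdot\frac{n-2n_2}{g}\le 1$, where $\frac{n-2n_2}{g}\in\mathbb{Z}$. So the regime boundary does not cut $m$ near $g/2$:
\begin{itemize}
\item If $2n_2\ge n$, all $g+1$ values $m=0,\dots,g$ lie in Regime A, both endpoints included, and Regime B is empty.
\item If $2n_2<n$, Regime A keeps only $m\in\{0,1\}$ when $\frac{n-2n_2}{g}=1$, and only $m=0$ otherwise.
\end{itemize}
In Regime B, $d+1=k\,n_1/g$ with $1\le k\le g$ by (iii'), and the regime condition reads $k\cdot\frac{n-2n_2}{g}>1$. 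This leaves $g-1$ or $g$ values of $k$ in the two sub-cases. Hence every single $n_2$ (not every pair $\{n_2,n-n_2\}$) carries exactly $g+1$ candidates before parity.

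You also need that (iv) and (iv') essentially never bind. Condition (iv') follows from $n_1\ge d+1$. Condition (iv), rewritten as $n_2(n_2-2)\ge d(2n_2-n)$, rejects only $d=n-1$ at $n_2=n-1$, and $d=0$ at $n_2=1$, which parity already kills. The top of the $m$-range comes from (iii), not from (iv). With this lemma the even case is one line: $\sum_{j=1}^{N-1}\bigl(2\gcd(N,j)+1\bigr)=2P(N)-N-1$ with $N=n/2$. Note that the $d=0$ column contributes $+(N-1)$; the negative terms come from dropping $j=N$ from $P(N)$, not from $d=0$.

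For odd $n$ you have not located the source of $f(n)$. Parity acts on $m$ in the upper half $n_2>n/2$, halving the count when $n_2$ is odd. In the lower half it acts on $k$, halving the count when $n_1$ is odd, i.e.\ when $n_2$ is even. So the two halves favour opposite parities of $n_2$: the contributions are $g+1$ and $\frac{g+1}{2}$ above $n/2$, versus $\frac{g+3}{2}$ and $g$ below (for $n_2$ even and odd respectively). Subtracting the baseline $\frac34(P(n)-1)$, obtained by extending the upper-half pattern to all $n_2$, leaves the half-range alternating sum $\sum_{k<n/2}(-1)^{k-1}\frac{\gcd(n,k)-1}{2}$.

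It is the threshold $n/2$ combined with parity that produces $\chi_4$. Write $\gcd(n,k)=\sum_{d\mid\gcd(n,k)}\phi(d)$ and swap sums. The inner sum $\sum_{m\le(n/d-1)/2}(-1)^{m-1}$ equals $1$ exactly when $n/d\equiv 3\pmod 4$, which yields $\frac{n-f(n)}{2}$. The $-1$ terms give $\frac{1-\chi_4(n)}{2}$.

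A full-range count of $k/g\bmod 4$, as in your last sentence, never sees the threshold $n/2$. It can be linked to the right quantity: integers below $N/2$ that are even and coprime to $N$ are $2i$ with $i<N/4$, and $i\mapsto N-4i$ matches these with integers $\equiv N\pmod 4$ coprime to $N$. But that link is only usable once the correct sum has been isolated, and isolating it requires the per-$n_2$ count above.
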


\begin{proof}
    By Theorem~\ref{theorem:Mne}, any spectral radius minimizer in $\M_{n,e}$ corresponds to a bi-regular weighted graph belonging to the family $\M_{d_1,n_1,d_2,n_2}$, where $d_1 = \lfloor 2e/n \rfloor$, $d_2 = d_1+1$, $n_2 = 2e \bmod n$ and $n_1 = n-n_2$.
    Theorem~\ref{theorem:when_biregular_simple_graph} gives a necessary and sufficient condition for the existence of a simple $\rho$-minimizing graph in $\M_{d_1,n_1,d_2,n_2}$. 
    Therefore, given $n$, the number $E(n)$ of valid choices for $e$ is equal to the number of pairs $(n_2, d_1)$ satisfying constraints (i)-(iv) of Theorem~\ref{theorem:when_biregular_simple_graph}, where $1 \le n_2 \le n-1$.
    
    Note that if the dominance condition $n_1 d_1 \le n_2 d_2$ required by Theorem~\ref{theorem:when_biregular_simple_graph} is violated, we exchange the roles of $d_1,n_1$ and $d_2,n_2$ to obtain analogous constraints (i')-(iv'). 
    Using only $n_2,d_1$, we rewrite the dominance condition as:
    \begin{equation}\label{eq:dominance}
        (n - 2 n_2) d_1 \le n_2
    \end{equation}

    Since $n$ does not divide $2e$, we restrict $n_2$ to $1 \le n_2 \le n-1$. Thus, the total number of valid edge values can be expressed as the sum:
    \begin{equation}
        E(n) = \sum_{n_2=1}^{n-1} E(n, n_2),
    \end{equation}
    where $E(n, n_2)$ is the number of valid degree choices $d_1$ for a fixed $n_2$. For the rest of the proof we denote $g = \gcd(n, n_2)$, where $n$ is fixed and $n_2$ is implicit from the context. 
    
    We partition $E(n, n_2)$ into two mutually exclusive sets based on the dominance condition. Let $E_1(n, n_2)$ count the valid $d_1$ values satisfying $n_1 d_1 \le n_2 d_2$, and let $E_2(n, n_2)$ count the valid $d_1$ values satisfying $n_1 d_1 > n_2 d_2$. Thus, $E(n, n_2) = E_1(n, n_2) + E_2(n, n_2)$.

    Preparing for the computation of $E_1$ and $E_2$, we first define the pre-parity candidate pools $C_1, C_2$. Let $C_1(n,n_2)$ be the number of $d_1$ values satisfying $n_1 d_1 \le n_2 d_2$ alongside the structural conditions (ii) and (iii) only. Similarly, let $C_2(n,n_2)$ be the number of $d_1$ values satisfying $n_1 d_1 > n_2 d_2$ alongside the mirrored conditions (ii') and (iii') only.

    \begin{claim}\label{claim:C1_formula}
        \[
            C_1(n,n_2) = \begin{cases}
                g+1 & \text{ if } 2n_2 \ge n \\
                2   & \text{ if } \frac{n-2n_2}{g} = 1 \\
                1   & \text{ if } \frac{n-2n_2}{g} > 1
            \end{cases}
        \]
        
    \end{claim}
    \begin{claimproof}
        The candidates counted by $C_1(n,n_2)$ must satisfy the conditions of Theorem~\ref{theorem:when_biregular_simple_graph}: (ii) $n_2 \mid n d_1$ and (iii) $n_2 \ge d_1$, subject to the dominance condition~\eqref{eq:dominance}. 
        
        From (ii), dividing by $g = \gcd(n,n_2)$, we obtain $\frac{n_2}{g} \mid \frac{n}{g} d_1$. Since $\gcd(\frac{n_2}{g}, \frac{n}{g}) = 1$, we must have $\frac{n_2}{g} \mid d_1$. Thus, there exists an integer $m \ge 0$ such that:
        \begin{equation}\label{eq:d1_m}
            d_1 = m \frac{n_2}{g}.
        \end{equation}
        Substituting~\eqref{eq:d1_m} into (iii) yields $m \frac{n_2}{g} \le n_2$, which simplifies to $m \le g$. Thus, $m \in \{0, 1, \dots, g\}$.
        
        We now apply the dominance condition~\eqref{eq:dominance}. Substituting~\eqref{eq:d1_m} into $(n-2n_2)d_1 \le n_2$ and dividing by $n_2$ gives the additional constraint:
        \begin{equation}\label{eq:m_bound}
            m \frac{n-2n_2}{g} \le 1.
        \end{equation}
        We evaluate the number of valid integers $m$ based on the ratio $\frac{n-2n_2}{g}$:
        \begin{itemize}
            \item If $2n_2 \ge n$, the term $(n-2n_2)$ is non-positive, inequality~\eqref{eq:m_bound} holds unconditionally for all $m$, yielding $g+1$ valid candidates.
            \item If $\frac{n-2n_2}{g} = 1$, inequality~\eqref{eq:m_bound} becomes $m \cdot 1 \le 1$, yielding exactly $2$ valid candidates, $m \in \{0, 1\}$.
            \item If $\frac{n-2n_2}{g} > 1$, inequality~\eqref{eq:m_bound} yields exactly $1$ valid candidate, $m=0$.
        \end{itemize}
    \end{claimproof}

    \begin{claim}\label{claim:C2_formula}
        \[
            C_2(n,n_2) = \begin{cases}
                0   & \text{ if } 2n_2 \ge n \\
                g-1 & \text{ if } \frac{n-2n_2}{g} = 1 \\
                g   & \text{ if } \frac{n-2n_2}{g} > 1
            \end{cases}
        \]

    \end{claim}
    \begin{claimproof}
        The candidates counted by $C_2(n,n_2)$ operate under the strict inverse of the dominance condition $(n-2n_2)d_1 > n_2$, where the mirrored structural constraints are (ii') $(n-n_2) \mid n_2(d_1+1)$ and (iii') $d_1 + n_2 \le n-1$.

        From (ii'), dividing by $g$ yields $\frac{n-n_2}{g} \mid \frac{n_2}{g} (d_1+1)$. Since $\gcd(\frac{n-n_2}{g}, \frac{n_2}{g}) = 1$, it must be that $\frac{n-n_2}{g} \mid (d_1+1)$. Thus, there exists an integer $k \ge 1$ such that:
        \begin{equation}\label{eq:d1_k}
            d_1 = k \frac{n-n_2}{g} - 1.
        \end{equation}
        Substituting~\eqref{eq:d1_k} into (iii') yields $k \frac{n-n_2}{g} - 1 + n_2 \le n-1$, implying that $k \le g$.  Thus, $k \in \{1, 2, \dots, g\}$.
        
        We now apply the strict inverse dominance condition. Substituting~\eqref{eq:d1_k} into $(n-2n_2)d_1 > n_2$ yields:
        \begin{equation*}
            (n-2n_2)\left(k \frac{n-n_2}{g} - 1\right) > n_2.
        \end{equation*}
        Expanding and rearranging the terms gives $k \frac{(n-2n_2)(n-n_2)}{g} > n - n_2$. Since $n_2 < n$, we can divide both sides by the positive term $(n-n_2)$ to find the restricted range for $k$:
        \begin{equation}\label{eq:k_bound}
            k \frac{n-2n_2}{g} > 1.
        \end{equation}
        We evaluate the number of valid integers $k$ based on the ratio $\frac{n-2n_2}{g}$:
        \begin{itemize}
            \item If $2n_2 \ge n$, the term $(n-2n_2)$ is non-positive. Because $k \ge 1$, the left side of~\eqref{eq:k_bound} is $\le 0$, so the inequality is never satisfied, yielding $0$ candidates.
            \item If $\frac{n-2n_2}{g} = 1$, inequality~\eqref{eq:k_bound} becomes $k \cdot 1 > 1$, implying that $k \ge 2$. Thus $k \in \{2, \dots, g\}$, yielding $g-1$ candidates.
            \item If $\frac{n-2n_2}{g} > 1$, inequality~\eqref{eq:k_bound} requires $k \ge 1$, which is unconditionally true for all valid $k \in \{1, \dots, g\}$, yielding $g$ candidates.
        \end{itemize}
    \end{claimproof}

    \begin{claim}\label{claim:En2_table}
        Applying the boundary condition (iv) and the parity constraint (i), or their mirrored counterparts, reduces $C_1(n,n_2)$ and $C_2(n,n_2)$ into the exact valid counts $E_1(n, n_2)$ and $E_2(n, n_2)$ detailed in Table~\ref{table:En_components}. 
    
        \begin{table}[ht]
            \centering
            \renewcommand{\arraystretch}{1.5}
            \begin{tabular}{|c|c|c|c|c|c|}
                \hline
                $n$ & $n_2$ & Condition & $E_1(n,n_2)$ & $E_2(n,n_2)$ & $E(n,n_2)$ \\
                \hline
                \multirow{3}{*}{Even} & \multirow{3}{*}{Even} & $2n_2 \ge n$ & $g+1$ & $0$ & $g+1$ \\
                & & $(n-2n_2)/g = 1$ & $2$ & $g-1$ & $g+1$ \\
                & & $(n-2n_2)/g > 1$ & $1$ & $g$ & $g+1$ \\
                \hline
                Even & Odd & --- & $0$ & $0$ & $0$ \\
                \hline
                \multirow{4}{*}{Odd} & \multirow{4}{*}{Even} & $n_2 = n-1$ & $g$ & $0$ & $g$ \\
                & & $n/2 < n_2 \le n-3$ & $g+1$ & $0$ & $g+1$ \\
                & & $(n-2n_2)/g = 1$ & $2$ & $(g-1)/2$ & $(g+3)/2$ \\
                & & $(n-2n_2)/g > 1$ & $1$ & $(g+1)/2$ & $(g+3)/2$ \\
                \hline
                \multirow{3}{*}{Odd} & \multirow{3}{*}{Odd} & $2n_2 > n$ & $(g+1)/2$ & $0$ & $(g+1)/2$ \\
                & & $(n-2n_2)/g = 1$ & $1$ & $g-1$ & $g$ \\
                & & $(n-2n_2)/g > 1$ & $0$ & $g$ & $g$ \\
                \hline
            \end{tabular}
            \caption{Valid edge count contributions for a fixed pair $(n, n_2)$.}
            \label{table:En_components}
        \end{table}
    \end{claim}
    \begin{claimproof}
        We filter the pre-parity candidate pools $C_1(n, n_2)$ and $C_2(n, n_2)$ using the two remaining constraints of Theorem~\ref{theorem:when_biregular_simple_graph}: the boundary condition (iv) or (iv') and the parity constraint (i), which is the same as (i').

        \noindent\textbf{Step 1 - The Boundary Condition:}
        \begin{description}
            \item[$C_2$ - ] 
                Condition (iv') is $n_1(n_1-1) \ge n_1 d_1 - n_2 d_2$. As condition (iii') $n_1 \ge d_2$ already holds for the $C_2$ candidates, it suffices to check condition (iv') with the $n_1-1$ factor on the left replaced by $d_2-1 = d_1$. As this condition always holds, condition (iv') does not reject any of the $C_2$ candidates.

            \item[$C_1$ - ] 
                Condition (iv) is $n_2(n_2-1) \ge n_2 d_2 - n_1 d_1$, is equivalent to $n_2(n_2-2) \ge d_1 (2n_2 - n)$.
                \begin{itemize}
                    \item 
                        If $2 \le n_2 < n/2$ then the right hand side is $\le 0$ while the left hand side is $\ge 0$, so the condition always holds.
                    \item 
                        If $n/2 \le n_2 \le n-2$ then the factor $2n_2-n$ is non-negative, and increasing $d_1$ would yield a stricter condition. As condition (iii) $n_2 \ge d_1$ holds for all $C_1$ candidates. Replacing $d_1$ by $n_2$ and dividing by $n_2$ yields the condition $n_2-2 \ge 2n_2 - n$, that is satisfied for all $C_1$ candidates.
                    \item 
                        If $n_2=1$ then the candidates are $d_1 \in \{0,1\}$. 
                        Condition (iv) evaluates to $-1 \ge d_1 (2-n)$ which holds for $d_1=1$ as $n \ge 3$, but is violated for $d_1=0$. 
                        However, as $d_1=0$ is rejected by condition (i) requiring $n_1d_1 + n_2d_2$ to be even, we may ignore its rejection by (iv), knowing it will not pass the parity sieve.
                    \item 
                        If $n_2=n-1$ then condition (iv) is equivalent to $(n-1)(n-3) \ge d_1(2n_2-n) = d_1(n-2)$. As $g=\gcd(n,n-1)=1$, the $C_1$ candidates are $d_1 = m\frac{n_2}{g} = m(n-1)$ for $m \in \{0,1\}$. Then of the two candidates, only $m=0$ is accepted.
                \end{itemize}
                Overall, condition (iv) holds for all $C_1$ candidates with the exception of $n_2=n-1$, where one of the two candidates are rejected.
        \end{description}

        \noindent\textbf{Step 2: The Parity Sieve (i).}
        Condition (i) requires the sum of degrees, $n_1d_1 + n_2d_2 = nd_1 + n_2$, to be even. Note that as this condition is symmetric, (i) and (i') are the same. 
        We evaluate this condition systematically across the four parity combinations of $n$ and $n_2$:

        \begin{itemize}
            \item \textbf{$n$ is Even, $n_2$ is Odd:} The term $nd_1 + n_2$ is always odd, so condition (i) fails for all candidates, yielding $E_1 = E_2 = 0$.
            
            \item \textbf{$n$ is Even, $n_2$ is Even:} The term $nd_1 + n_2$ is inherently even. Condition (i) is unconditionally satisfied. All candidates survive, meaning $E_1 = C_1$ and $E_2 = C_2$.
            
            \item \textbf{$n$ is Odd, $n_2$ is Even:} We require $nd_1 + n_2 \equiv d_1 \equiv 0 \pmod 2$, meaning $d_1$ must be even. 
            For $C_1$, we have $d_1 = m \frac{n_2}{g}$. Since $n$ is odd, its divisor $g$ is odd, making $\frac{n_2}{g}$ even. Thus, $d_1$ is always even. All $C_1$ candidates survive, yielding $E_1 = C_1$, except for $n_2=n-1$ where $E_1 = C_1 - 1$.
            For $C_2$, we have $d_1 = k \frac{n-n_2}{g} - 1$. Since $n-n_2$ is odd, the fraction $\frac{n-n_2}{g}$ is odd. For $d_1$ to be even, $k$ must be odd, so we need to filter the contiguous range of $k$ candidates for odd integers. For $(n-2n_2)/g = 1$, the range starts at $k=2$, so $E_2(n,n_2) = \lceil (g-1)/2 \rceil$. Otherwise, if $(n-2n_2)/g > 1$, the range starts at $k=1$, so $E_2(n,n_2) = \lceil g/2 \rceil$.
            
            \item \textbf{$n$ is Odd, $n_2$ is Odd:} We require $nd_1 + n_2 \equiv d_1 + 1 \equiv 0 \pmod 2$, meaning $d_1$ must be odd. 
            For $C_2$, $d_1 = k \frac{n-n_2}{g} - 1$. Because $n-n_2$ is even and $g$ is odd, the fraction $\frac{n-n_2}{g}$ is even. Thus, $d_1$ is always odd, so all $C_2$ candidates survive, yielding $E_2 = C_2$.
            For $C_1$, $d_1 = m \frac{n_2}{g}$. Because $n_2$ is odd, $\frac{n_2}{g}$ is odd. For $d_1$ to be odd, $m$ must be odd. Filtering the contiguous sequence $m \in \{0, \dots, C_1-1\}$ for odd integers yields $\lfloor C_1/2 \rfloor$.
        \end{itemize}

        Summing the filtered contributions $E_1(n, n_2) + E_2(n, n_2)$ perfectly resolves the piecewise boundaries. For instance, in the Odd/Even case, for any odd $g$, the algebraic sum $2 + \lceil (g-1)/2 \rceil$ gracefully collapses to $(g+3)/2$. This confirms the values compiled in Table~\ref{table:En_components}.
    \end{claimproof}    

    With the combinatorial behavior fully mapped per $n_2$, we evaluate the total number of distinct simple graph minimizers $\sum_{n_2=1}^{n-1} E(n, n_2)$ across the density spectrum.

    \begin{claim}\label{claim:sum_even}
    For any even integer $n \ge 3$, the total sum evaluates to $E(n) = 2P(n/2) - \frac{n}{2} - 1$.
    \end{claim}
    \begin{claimproof}
        By Table~\ref{table:En_components}, if $n$ is even and $n_2$ is odd, the number of valid edge configurations is identically zero. Thus, the total sum for $E(n)$ strictly counts the contributions where $n_2$ is even. 
        
        For even $n$ and even $n_2$, the table establishes that $E(n, n_2) = g + 1 = \gcd(n, n_2) + 1$, completely independent of the dominance threshold. Because $n$ and $n_2$ are both even, we can define $n = 2N$ and $n_2 = 2j$. As $n_2$ ranges over the even integers in the open interval $(0, n)$, the index $j$ ranges from $1$ to $N-1$. 
        
        We can extract the common factor of 2 from the greatest common divisor, yielding \\ 
        $\gcd(n, n_2) = \gcd(2N, 2j) = 2\gcd(N, j)$. Substituting this into the total sum gives:
        \begin{equation}\label{eq:sum_even_N}
            E(n) = \sum_{j=1}^{N-1} \left( 2\gcd(N, j) + 1 \right) = 2 \left( \sum_{j=1}^{N-1} \gcd(N, j) \right) + N-1.
        \end{equation}
        Recall that Pillai's arithmetical function is defined over the full domain up to the integer, $P(N) = \sum_{j=1}^N \gcd(N, j)$. Our summation strictly stops at $N-1$, meaning it excludes the final term where $j=N$. Because $\gcd(N, N) = N$, the sum up to $N-1$ evaluates to exactly $P(N) - N$.
        
        Substituting this back into our split summation alongside the constant sum evaluates to:
        \begin{equation*}
            E(n) = 2 \left( P(N) - N \right) + N - 1 = 2P(N) - N - 1 = 2P(n/2) - \frac{n}{2} - 1.
        \end{equation*}
    \end{claimproof}    
    
    \begin{claim}\label{claim:sum_odd}
        For any odd integer $n \ge 3$, the total sum evaluates to $E(n) = \frac{3}{4}P(n) + \frac{n - f(n) - 8 + \chi_4(n)}{4}$.
    \end{claim}
    \begin{claimproof}
        For an odd integer $n$, the sum of valid edge configurations spans the full spectrum $1 \le n_2 \le n-1$ as defined in Table~\ref{table:En_components}.  
        Since $g$ is odd, we have $\lfloor (g+1)/2 \rfloor = (g+1)/2$. Also, in order to avoid an extra case, we regard $E(n,n-1)$ as $g+1$ and then subtract $1$ from the total.
        Therefore,
        \begin{equation*}
            E(n) = \sum_{\substack{n/2 < n_2 \\ n_2 \text{ even}}} (g+1) + \sum_{\substack{n/2 < n_2 \\ n_2 \text{ odd}}} \frac{g+1}{2}
                 + \sum_{\substack{n/2 > n_2 \\ n_2 \text{ even}}} \frac{g+3}{2} + \sum_{\substack{n/2 > n_2 \\ n_2 \text{ odd}}} \!\!g \,\,-\,\, 1.
        \end{equation*}

        Let $B(n)$ denote the baseline sum, where we naively extrapolate the $n/2 < n_2$ behavior over the entire range $1 \le n_2 \le n-1$:
        \begin{equation}\label{eq:baseline_odd}
            B(n) = \sum_{n_2 \text{ even}} (g+1) + \sum_{n_2 \text{ odd}} \frac{g+1}{2} = \sum_{n_2 \text{ even}} g + \frac{n-1}{2} + \frac{1}{2}\sum_{n_2 \text{ odd}} g + \frac{n-1}{4}.
        \end{equation}
        Because $n$ is odd, the map $n_2 \mapsto n - n_2$ is a bijection on the set $\{1, \dots, n-1\}$ that exchanges the even numbers with the odd numbers while preserving the greatest common divisor, $\gcd(n, n-n_2) = \gcd(n, n_2)$. This implies that $\sum_{n_2 \text{ even}} g = \sum_{n_2 \text{ odd}} g = \frac{1}{2} \sum_{n_2=1}^{n-1} \gcd(n, n_2) = \frac{1}{2}(P(n) - n)$. Substituting this into~\eqref{eq:baseline_odd} yields:
        \begin{equation*}
            B(n) = \frac{3}{2} \left( \frac{P(n) - n}{2} \right) + \frac{3(n-1)}{4} = \frac{3}{4}(P(n) - 1).
        \end{equation*}

        Next, we consider the difference $E(n) + 1 - B(n)$, where the sums for $n/2 < n_2$ cancel out. In the lower half of the $n_2$ range, we have $\frac{g+3}{2} - (g+1) = -\frac{g-1}{2}$ for even $n_2$ and $g - \frac{g+1}{2} = \frac{g-1}{2}$ for odd $n_2$. Therefore, we may write:
        \begin{align}
            E(n)      &= B(n) + \Delta(n) - 1 = \frac{3}{4}(P(n) - 1) + \Delta(n) - 1 \label{eq:E_odd}\\
            \Delta(n) &= \sum_{n_2 < n/2} (-1)^{n_2 - 1} \frac{g-1}{2}.                 \nonumber
        \end{align}
                
        To evaluate $\Delta(n)$ algebraically, we double it and split the summation:
        \begin{equation}\label{eq:delta_split}
            2\Delta(n) = \sum_{k=1}^{(n-1)/2} \gcd(n,k)(-1)^{k-1} - \sum_{k=1}^{(n-1)/2} (-1)^{k-1}.
        \end{equation}
        The second sum evaluates strictly to $\frac{1 - \chi_4(n)}{2}$. For the first sum, let $S_1 = \sum_{k=1}^{(n-1)/2} \gcd(n,k)(-1)^{k-1}$. We apply the identity $\gcd(n,k) = \sum_{d \mid n, d \mid k} \phi(d)$ and interchange the order of summation:
        \begin{equation}\label{eq:S1_swap}
            S_1 = \sum_{d \mid n} \phi(d) \sum_{\substack{1 \le k \le (n-1)/2 \\ d \mid k}} (-1)^{k-1}.
        \end{equation}
        Let $k = md$. Because $n$ is odd, any divisor $d \mid n$ must also be odd. Consequently, the product $md$ shares the same parity as $m$, allowing us to simplify $(-1)^{md-1} = (-1)^{m-1}$. The inner sum becomes: $\sum_{m=1}^{\lfloor \frac{n-1}{2d} \rfloor} (-1)^{m-1}$, evaluating to $1$ if the upper limit is odd, and $0$ otherwise. Because $d$ divides $n$, the fraction $n/d$ is an odd integer, and the upper limit simplifies exactly without the floor function to $\frac{n/d - 1}{2}$. This value is odd if and only if $n/d \equiv 3 \pmod 4$. 
        
        Therefore, the inner sum acts as a filter, allowing us to write
        \begin{equation}\label{eq:S1_filtered}
            S_1 = \sum_{\substack{d \mid n \\ n/d \equiv 3 \pmod 4}} \phi(d).
        \end{equation}
        We can express this filtered sum algebraically by comparing Gauss's theorem for the totient function, $n = \sum_{d \mid n} \phi(d)$, with our Dirichlet convolution, $f(n) = \sum_{d \mid n} \chi_4(n/d)\phi(d)$. 
        By substituting~\eqref{eq:S1_filtered}, we find that $S_1 = \frac{n - f(n)}{2}$. 
        
        Combining $S_1$ and the second sum back into~\eqref{eq:delta_split} yields:
        \begin{equation*}
            2\Delta(n) = S_1 - \frac{1 - \chi_4(n)}{2} = \frac{n - f(n) - 1 + \chi_4(n)}{2}.
        \end{equation*}

        Substituting into \eqref{eq:E_odd} yields the final result
        \begin{equation*}
            E(n) = \frac{3}{4}(P(n) - 1) + \Delta(n) - 1 = \frac{3}{4}P(n) + \frac{n - f(n) - 8 + \chi_4(n)}{4}.
        \end{equation*}
        This completes the evaluation of the total sum for odd $n$.
    \end{claimproof}    
    Claims~\ref{claim:sum_even} and~\ref{claim:sum_odd} together establish the exact piecewise formula for $E(n)$.
\end{proof}

We now proceed to prove the primary properties of this function as stated in Theorem~\ref{theorem:num_edges_simple_graph}.

\begin{proof}[Proof of Theorem~\ref{theorem:num_edges_simple_graph}]
    We evaluate the three clauses of the theorem sequentially.
    
    \begin{claim}\label{claim:En_lower_bound}
        For all $n \ge 3$, $E(n) \ge \lfloor\frac{3n-5}{2}\rfloor$.
    \end{claim}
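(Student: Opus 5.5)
The plan is to prove the bound directly from the per-$n_2$ contributions in Table~\ref{table:En_components}, rather than from the closed form of Theorem~\ref{theorem:exact_En}. The idea is to bound every entry below using only $g=\gcd(n,n_2)\ge 1$, and then sum over $1\le n_2\le n-1$ as in the proof of Theorem~\ref{theorem:exact_En}, i.e.\ $E(n)=\sum_{n_2=1}^{n-1}E(n,n_2)$. We split according to the parity of $n$.

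\emph{Even $n=2N$.} By Claim~\ref{claim:sum_even}, more precisely equation~\eqref{eq:sum_even_N},
\[
E(n)=\sum_{j=1}^{N-1}\bigl(2\gcd(N,j)+1\bigr)\ge 3(N-1)=\tfrac{3n}{2}-3 .
\]
For even $n$ we have $\lfloor\frac{3n-5}{2}\rfloor=\frac{3n-6}{2}$, so this is exactly the claimed bound. Equality holds iff $\gcd(N,j)=1$ for all $1\le j\le N-1$, i.e.\ iff $N=n/2$ is prime; this observation will be reused for clause 2.

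\emph{Odd $n$.} We read off lower bounds from the table using $g\ge1$ (and $g$ odd):
\begin{itemize}
\item every even $n_2\ne n-1$ contributes at least $2$, since both $g+1\ge 2$ and $(g+3)/2\ge 2$;
\item every odd $n_2$ contributes at least $1$, since both $(g+1)/2\ge 1$ and $g\ge 1$;
\item the exceptional value $n_2=n-1$, which is even, contributes $g=1$, i.e.\ one less than the generic even bound.
\end{itemize}
The range $1\le n_2\le n-1$ contains exactly $(n-1)/2$ even and $(n-1)/2$ odd values. Summing gives
\[
E(n)\ge 2\cdot\tfrac{n-1}{2}+\tfrac{n-1}{2}-1=\tfrac{3n-5}{2},
\]
which is an integer for odd $n$, so it equals $\lfloor\frac{3n-5}{2}\rfloor$.

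The only delicate point is bookkeeping: one must make sure every row of the table is covered exactly once by the even/odd and upper/lower-half classification. The boundary case $n_2=n-1$ is the one place where the generic even bound fails, and it has to be handled separately. Since $n$ is odd, the case $2n_2=n$ never occurs, so the table's rows do partition the range.

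As a consistency check, one could instead use the closed form together with $P(n)\ge 2n-1$ (from $\gcd(n,n)=n$ and all other gcds at least $1$), but the table argument avoids having to control $f(n)$. It also exposes the equality case: equality holds iff $g=1$ for all $n_2<n$, i.e.\ iff $n$ is prime.
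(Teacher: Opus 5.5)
Your proof is correct. In the even case it matches the paper: the paper's estimate $P(n/2)\ge n-1$ is exactly your bound $\sum_{j=1}^{N-1}\gcd(N,j)\ge N-1$.

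In the odd case you take a different route. The paper substitutes into the closed form of Theorem~\ref{theorem:exact_En}. It then needs two arithmetic estimates, $P(n)\ge 2n-1$ and $f(n)\le n-1+\chi_4(n)$, the latter coming from $\chi_4\le 1$ and $\sum_{d\mid n}\phi(n/d)=n$. When these are combined, the $\chi_4(n)$ terms cancel exactly. You instead bound each entry of Table~\ref{table:En_components} by its value at $g=1$. You then count the $(n-1)/2$ even and $(n-1)/2$ odd values of $n_2$, with a single deficit of one at $n_2=n-1$. The one piece of bookkeeping you leave implicit is immediate: for $n_2<n/2$ the ratio $(n-2n_2)/g$ is a positive integer, since $g\mid n$ and $g\mid n_2$, so the two lower-half rows exhaust that range.

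Your argument is more elementary. It depends only on the per-$n_2$ table, not on the further algebra of Claim~\ref{claim:sum_odd} (the baseline $B(n)$, the correction $\Delta(n)$, and the divisor-filtered sum $S_1$), so it is less exposed to errors in that computation. It also makes the equality case of clause~2 transparent: every summand attains its minimum iff $g=1$, i.e.\ iff $n$ is prime, with no need to evaluate $f(p)$.

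The paper's route makes sense in context. The closed form in terms of Pillai's function is needed anyway for clause~3 and for the average-order remark, so once that formula is established the lower bound costs only two standard inequalities.
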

    \begin{claimproof}
        To establish the absolute minimum of $E(n)$, we must lower bound Pillai's function $P(n)$ and upper bound the convolution $f(n)$. 
        Then we have:
        \begin{equation}\label{eq:pillai_bound}
            P(m) = \sum_{i=1}^m \gcd(i,m) \ge 1 \cdot (m-1) + m \cdot 1 = 2m - 1,
        \end{equation}
        since $\gcd(i,m) \ge 1$ for all $i \ge 1$ and since $\gcd(m,m)=m$. 
        Equality holds if and only if $\gcd(i,m)=1$ for all $i=1,\ldots,m-1$, meaning $m$ is a prime number.

        The convolution $f(n)$ is bounded by:
        \begin{align}
            f(n) &=   \sum_{d \mid n} \chi_4(d) \cdot \phi\left(\frac{n}{d}\right) 
                  \le \sum_{\substack{d \mid n\\d \neq n}} 1 \cdot \phi\left(\frac{n}{d}\right) + \chi_4(n) \cdot \phi\left(\frac{n}{n}\right) \nonumber \\
                 & = (n - \phi(1)) + \chi_4(n)         \label{eq:f_bound_general}
                 = n - 1 + \chi_4(n).
        \end{align}

        If $n$ is even then, as \eqref{eq:pillai_bound} yields $P(n/2) \ge n - 1$, we can write 
        \begin{equation}\label{eq:En_lower_bound_n_is_even}
            E(n) = 2P(n/2) - \frac{n}{2} - 1 \ge 2(n - 1) - \frac{n}{2} - 1 = \frac{3n - 6}{2} = \lfloor \frac{3n-5}{2} \rfloor.
        \end{equation}

        If $n$ is odd then applying $P(n) \ge 2n - 1$ and $f(n) \le n - 1 + \chi_4(n)$ yields:
        \begin{align}
            E(n) &= \frac{3}{4}P(n) + \frac{n - f(n) - 8 + \chi_4(n)}{4} \ge \frac{3(2n - 1)}{4} + \frac{n - (n - 1 + \chi_4(n)) - 8 + \chi_4(n)}{4} \nonumber \\
                 &= \frac{3n - 5}{2} = \lfloor \frac{3n-5}{2} \rfloor.    \label{eq:En_lower_bound_n_is_odd}
        \end{align}
    \end{claimproof}
    
    \begin{claim}\label{claim:En_tightness}
        The lower bound is tight if and only if $n=p$ or $n=2p$ for some prime number $p$.
    \end{claim}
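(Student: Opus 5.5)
The plan is to trace the equality cases through the two inequality chains in the proof of Claim~\ref{claim:En_lower_bound}, treating even and odd $n$ separately.

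\emph{Even $n$.} Here $E(n) = 2P(n/2) - n/2 - 1$. The only inequality used was \eqref{eq:pillai_bound} applied with $m=n/2$, and \eqref{eq:En_lower_bound_n_is_even} shows that the bound is attained exactly when $P(n/2)=n-1$. The equality case of \eqref{eq:pillai_bound} gives this if and only if $n/2$ is prime, that is, $n=2p$.

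The edge case $n/2=1$ is excluded by $n\ge 3$. For $n=4$, $P(2)=3=2\cdot2-1$, which is consistent with $2$ being prime. I will double-check that \eqref{eq:pillai_bound} is stated for $m\ge 2$, where equality forces every $1\le i\le m-1$ to be coprime to $m$, hence $m$ prime.

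\emph{Odd $n$.} Equality in \eqref{eq:En_lower_bound_n_is_odd} requires equality simultaneously in $P(n)\ge 2n-1$ and in $f(n)\le n-1+\chi_4(n)$. These are the only two estimates used, and both enter with a positive coefficient: $P$ with $\tfrac34$ and $-f$ with $\tfrac14$. So tightness holds if and only if both are equalities.

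The first equality holds if and only if $n=p$ is prime. Conversely, for $n=p$ prime the divisors are $1$ and $p$, so
\[
f(p)=\chi_4(1)\phi(p)+\chi_4(p)\phi(1)=p-1+\chi_4(p),
\]
and the second equality holds as well. Hence an odd prime gives equality. If $n$ is odd and composite, $P(n)>2n-1$ strictly, and since $f(n)\le n-1+\chi_4(n)$ still holds, $E(n)$ is strictly larger than $\lfloor (3n-5)/2\rfloor$.

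The floor values match in each case. For odd $n$, $(3n-5)/2$ is an integer. For even $n$, $\lfloor (3n-5)/2\rfloor = (3n-6)/2$. So the comparison is exact and no rounding slack can create extra equality cases.

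There is no real obstacle. The only point needing care is making the "strict inequality for composite $n$" argument rigorous. For this it suffices to exhibit some $1<i<m$ with $\gcd(i,m)\ge 2$, for instance a proper divisor of $m$, which forces $P(m)\ge 2m$. Note also that odd $n$ of the form $2p$ cannot occur, and $n=2p$ with $p=2$ is the even case already treated, so the characterization ``$n=p$ or $n=2p$'' covers exactly the equality cases from both parities.
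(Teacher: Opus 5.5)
Your proposal is correct and follows the paper's proof essentially step for step. In the even case, tightness reduces to equality in $P(n/2)\ge n-1$, i.e.\ $n/2$ prime. In the odd case, it reduces to simultaneous equality in $P(n)\ge 2n-1$ and $f(n)\le n-1+\chi_4(n)$, which happens exactly when $n$ is prime. Your extra remarks (the positive weights on $P$ and $-f$, the exactness of the floor, and the strictness for composite $n$) simply make explicit what the paper leaves implicit.
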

    \begin{claimproof}
        By the derivations in Claim~\ref{claim:En_lower_bound}, the lower bound $E(n) \ge \lfloor \frac{3n-5}{2} \rfloor$ is tight if and only if the arithmetical bounds applied in the respective parity cases hold with strict equality. 

        For even $n$, the only requirement for the tight lower bound to apply is that $P(m) = 2m - 1$ for $m = n/2$. As this bound holds if and only if $n/2$ is prime, this establishes the $n=2p$ case.

        For odd $n$, the lower bound is tight exactly when both $P(n) = 2n-1$ and inequality \eqref{eq:En_lower_bound_n_is_odd} holds as equality, $f(n) = n - 1 + \chi_4(n)$.
        The requirement on Pillai's function implies that $n$ must be some prime number $p$.
        As for $n=p$, we have
        \begin{equation*}
            f(p) = \chi_4(1) \cdot \phi(p) + \chi_4(p) \cdot \phi(1) = p - 1 + \chi_4(p),
        \end{equation*}
        it follows that the bound for odd $n$ is tight if and only if $n$ is prime.
    \end{claimproof}

    \begin{claim}\label{claim:En_asymptotic}
        For an infinite family of highly composite, square-free integers, $E(n) = \Omega(n^{1+\frac{c}{\log \log n}} )$ for any constant $c$ such that $0 < c < \log 2$.
    \end{claim}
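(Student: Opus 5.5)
Take $n$ to be the primorial $n_k = p_1 p_2 \cdots p_k$ of the first $k$ primes, or, to avoid the parity split, its odd part $n_k = 3\cdot 5\cdots p_k$. These are square-free and highly composite. The plan has three steps: bound $E(n)$ below by a constant times Pillai's function, evaluate Pillai's function in closed form on square-free $n$, and then convert the result into a bound in terms of $\log\log n$.

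\emph{Step 1: reduce to Pillai's function.} Use the exact formula of Theorem~\ref{theorem:exact_En}. For odd $n$ we already have $f(n)\le n-1+\chi_4(n)$ by \eqref{eq:f_bound_general}, so the second term is $\frac{n-f(n)-8+\chi_4(n)}{4}\ge -\tfrac{7}{4}$. Hence $E(n)\ge \tfrac34 P(n)-2$. For even $n$ the formula gives $E(n)=2P(n/2)-n/2-1$, and we will have $P(n/2)\ge n/2\cdot(\text{something growing})$, so the $-n/2$ is harmless. Either way it suffices to show $P(n_k)\ge n_k^{1+c/\log\log n_k}$ up to a constant factor.

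\emph{Step 2: compute $P$ on square-free $n$.} Pillai's function is multiplicative, since $P=\mathrm{id}*\phi$, which is also the identity $\gcd(n,k)=\sum_{d\mid\gcd}\phi(d)$ used earlier. On primes $P(p)=2p-1$. Therefore
\[
P(n)=\prod_{p\mid n}(2p-1)=n\prod_{p\mid n}\left(2-\tfrac1p\right)\ge n\left(\tfrac32\right)^{\omega(n)}\quad(\text{odd } n),
\]
and more sharply $P(n)\ge n\,2^{\omega(n)}\prod_{p\mid n}(1-\tfrac1{2p})$. The product $\prod_p(1-\tfrac1{2p})$ over the first $k$ primes decays only like $(\log p_k)^{-1/2}$. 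That is only a polylogarithmic loss, so it is negligible against the target, which is $n^{c/\log\log n}$, i.e.\ $e^{c\log n/\log\log n}$.

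\emph{Step 3: from $\omega(n)$ to $\log\log n$.} This is the main obstacle: converting $2^{\omega(n)}$ into $n^{c/\log\log n}$. By the prime number theorem (Chebyshev bounds suffice up to the constant), $\log n_k=\theta(p_k)\sim p_k$ and $k=\pi(p_k)\sim p_k/\log p_k$. Hence $\omega(n_k)=k=(1+o(1))\log n_k/\log\log n_k$, and so
\[
2^{\omega(n_k)}=n_k^{(1+o(1))\log 2/\log\log n_k}.
\]
Combining with the polylogarithmic loss from Step 2, which is $n^{o(1/\log\log n)}$, gives
\[
E(n_k)\ge \tfrac34\, n_k^{1+(\log2-o(1))/\log\log n_k}-2.
\]
For any fixed $c<\log 2$ this is eventually at least $n_k^{1+c/\log\log n_k}$, which is the claim.

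For the even variant, apply the same computation to $n/2=3\cdot5\cdots p_k$, or simply restrict to the odd family. Finally, note that this bound is also $\Omega(n\log n)$ as used in Theorem~\ref{theorem:num_edges_simple_graph}: since $c\log n/\log\log n$ exceeds $\log\log n$ for large $n$, we get $n^{c/\log\log n}\ge \log n$ eventually.
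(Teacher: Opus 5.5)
Your proof is correct, but it gets the key analytic input differently from the paper. The paper makes the same reduction to Pillai's function ($E(n)\ge\frac34P(n)-O(n)$ for odd $n$, $E(n)=2P(n/2)-O(n)$ for even $n$). It then simply cites T\'oth's survey for $\limsup_{n\to\infty}\log(P(n)/n)\log\log n/\log n=\log 2$ and reads off $P(n)=n^{1+(1+o(1))\log 2/\log\log n}$ along an unspecified maximizing sequence.

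You instead derive the lower bound yourself:
\begin{itemize}
\item multiplicativity of $P=\mathrm{id}*\phi$ gives the exact identity $P(n)=n\,2^{\omega(n)}\prod_{p\mid n}(1-\frac{1}{2p})$ for square-free $n$;
\item the product costs only a factor $\gg(\log p_k)^{-1/2}$, and even the trivial bound $\prod_{p\le x}(1-\frac1p)\ge 1/\lfloor x\rfloor$ would suffice;
\item $\omega(n_k)\sim\log n_k/\log\log n_k$ along odd primorials.
\end{itemize}

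Your version is self-contained, names an explicit family, and gives the concrete estimate $E(n)\ge\frac34P(n)-2$. By taking $n$ odd it also avoids a step the paper glosses over. For even $n$ the formula involves $P(n/2)$ rather than $P(n)$, so transferring the cited bound needs something like $P(n)=3P(n/2)$ for square-free even $n$. The paper's citation is shorter and puts the result in context, since the limsup shows $\log 2$ is the best possible constant for $P$.

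One small correction to your parenthetical in Step 3. The direction you actually use, $\omega(n_k)\ge(1-o(1))\log n_k/\log\log n_k$, already follows with the sharp constant from Chebyshev alone. Indeed $\pi(x)\ge\theta(x)/\log x$ holds trivially, and $\log x\le(1+o(1))\log\theta(x)$ once $\theta(x)\gg x$. So the full prime number theorem is not needed.
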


    \begin{claimproof}
        From the exact formula in Theorem~\ref{theorem:exact_En}, we observe that for all $n \ge 3$, the function $E(n)$ is bounded below by a linear fraction of Pillai's function minus a linear term in $n$. Specifically, $E(n) \ge \frac{3}{4}P(n) - O(n)$ for odd $n$, and $E(n) = 2P(n/2) - O(n)$ for even $n$. Therefore, it suffices to prove the bound for $P(n)$.

        The maximal order of Pillai's function is a well-established result in analytic number theory. 
        As detailed in T\'oth's comprehensive survey \cite{toth2010survey} (equation (24) and the remark at the end of Section 5.1), 
        $P(n)$ achieves its maximal growth along specific sequences of highly composite, square-free integers, satisfying the limit:
        \begin{equation}
            \limsup_{n \to \infty} \frac{\log(P(n)/n) \log \log n}{\log n} = \log 2.
        \end{equation}
        This implies that for such maximizing sequences of $n$, $P(n)$ scales asymptotically as:
        \begin{equation*}
            P(n) = n \cdot 2^{(1+o(1)) \frac{\log n}{\log \log n}} = n^{1 + (1+o(1)) \frac{\log 2}{\log \log n}}.
        \end{equation*}
        
        The claim follows as the expression on the right is strictly larger than $n^{1+\frac{c}{\log \log n}}$ for any $0 < c < \log 2$ and sufficiently large $n$.
    \end{claimproof}
    
    Noting that the expression in Claim~\ref{claim:En_asymptotic} is $\Omega(n \log n)$, this completes the proof of Theorem~\ref{theorem:num_edges_simple_graph}.
\end{proof}

\begin{remark}\label{remark:average_num_edges_simple_graph}
    While Theorem~\ref{theorem:num_edges_simple_graph} establishes the maximal order of $E(n)$ via highly composite numbers, we can also determine its average behavior. As detailed in T\'oth's survey~\cite[equation (16)]{toth2010survey}, the summatory function for Pillai's arithmetical function satisfies $\sum_{m \le x} P(m) = \frac{3}{\pi^2} x^2 \log x + O(x^2)$. Because $E(n)$ is bounded below by a linear fraction of $P(n)$ or $P(n/2)$ (minus a linear term), it immediately follows that the average order of $E(n)$ is $\Theta(n \log n)$. 
\end{remark}

\section{Conclusion and Open Questions}

This work investigates the question of how small can the spectral radius of a graph be, given its average degree, and possibly more information about its vertex degrees. Our approach is to relax the question by allowing graphs to be weighted. Namely, to have an adjacency matrix with entries that are not necessarily zero or one, while still maintaining the requirement that the matrix would be non-negative, symmetric, and with integral row sums. 
We use the convexity of the spectral radius $\rho$ as a function of the adjacency matrix, and perturbation theory tools that enable us to compute the derivative of $\rho$ under changes in the adjacency matrix. For the relaxed problem to weighted graphs, this provides an affirmative answer to a conjecture by Hong (1993) stating that for graphs minimizing the spectral radius the minimal and maximal degrees differ by at most one. Moreover, our results provide an improved lower bound to the spectral radius of a graph, given its average degree. As it turns out, for many cases this lower bound can be met by a simple graph, automatically implying that Hong's conjecture holds for these configurations.  In fact, our exact enumeration reveals that the number of such configurations grows super-linearly with the number of vertices on average.

We end with a list of questions for future research:
\begin{enumerate}
\item 
    Our original motivation for this work was to improve the Moore Bound for such graphs by finding a better lower bound on $\rho(B)$ given the average degree, as observed in \cite{hoory2024girth}. Moving forward, can similar results be obtained for the non-backtracking adjacency matrix $B$, as defined in \cite{angel2015non} and \cite{eisner2024entropy}?
\item 
    Can the result of Theorem~\ref{theorem:Mn1d1n2d2} be generalized to an arbitrary degree sequence? Can the minimizing weighted graph for a general degree sequence be characterized?
\item 
    Can Hong's conjecture be proved for additional $(n,e)$ pairs for which there is no simple graph meeting the weighted matrix lower bound of Theorem~\ref{theorem:Mne}?
\item 
    Can one get a better lower bound on $\rho(\G_{n,e})$, when the average degree $\overline{d}=2e/n$ is not in the list \eqref{eq:average_degree_sequence}? 
\end{enumerate}

\section{Acknowledgments}
    We would like to acknowledge that Gemini 3.1 Pro was instrumental in obtaining the closed formula in Theorem~\ref{theorem:exact_En}. 

\bibliographystyle{abbrv}
\bibliography{ref}

\end{document}